\newcommand{\cc}{\mathbb C}
\newcommand{\zz}{\mathbb Z}
\newcommand{\rr}{\mathbb R}
\newcommand{\A}{\mathbb A}
\newcommand{\la}{\langle}
\newcommand{\ra}{\rangle}
\newcommand{\lra}{\longrightarrow}
\newcommand{\hra}{\hookrightarrow}
\newcommand{\al}{\alpha}
\newcommand{\sig}{\sigma}
\newcommand{\be}{\beta}
\newcommand{\ga}{\gamma}
\newcommand{\de}{\delta}
\newcommand{\De}{\Delta}
\newcommand{\Ga}{\Gamma}
\newcommand{\ep}{\epsilon}
\newcommand{\lam}{\lambda}
\newcommand{\Lam}{\Lambda}
\newcommand{\vp}{\varpi}
\newcommand{\ka}{\kappa}
\DeclareMathOperator{\Gal}{Gal}
\DeclareMathOperator{\End}{End}
\DeclareMathOperator{\G}{G}
\DeclareMathOperator{\GL}{GL}
\DeclareMathOperator{\Hom}{Hom}
\DeclareMathOperator{\U}{U}
\DeclareMathOperator{\SO}{SO}
\DeclareMathOperator{\RO}{RO}
\DeclareMathOperator{\SRO}{SRO}
\DeclareMathOperator{\Sp}{Sp}
\DeclareMathOperator{\Lie}{Lie}
\DeclareMathOperator{\Ad}{Ad}
\DeclareMathOperator{\Orb}{Orb}
\DeclareMathOperator{\val}{val}
\DeclareMathOperator{\inv}{inv}
\DeclareMathOperator{\supp}{supp}
\newcommand{\fu}{\mathfrak u}
\newcommand{\fg}{\mathfrak g}
\newcommand{\ft}{\mathfrak t}
\newcommand{\fh}{\mathfrak h}
\newcommand{\fgl}{\mathfrak{gl}}
\newcommand{\calc}{\mathfrak{C}}
\newcommand{\calq}{\mathcal{Q}}
\newcommand{\calo}{\mathcal{O}}
\newcommand{\cald}{\mathcal{D}}
\newcommand{\Herm}{\mathcal{H}erm}
\newcommand{\Y}{\mathbf{Y}}
\newcommand{\X}{\mathbf{X}}
\newcommand{\Nm}{\mathrm{Nm}}
\newcommand{\iso}{\xrightarrow{\sim}}
\newcommand{\Fbar}{\overline{F}}
\newcommand{\bfun}{\mathbf{1}}
\def\Ddots{\mathinner{\mkern1mu\raise\p@
\vbox{\kern7\p@\hbox{.}}\mkern2mu
\raise4\p@\hbox{.}\mkern2mu\raise7\p@\hbox{.}\mkern1mu}}
\newtheorem{Thm}{Theorem}[section]
\newtheorem{Prop}[Thm]{Proposition}
\newtheorem{Lem}[Thm]{Lemma}
\newtheorem{Cor}[Thm]{Corollary}
\newtheorem{Conj}[Thm]{Conjecture}
\newtheorem{Notation}[Thm]{Notation}
\theoremstyle{definition}
\newtheorem{Def}[Thm]{Definition}
\theoremstyle{remark}
\newtheorem{Rem}[Thm]{Remark}
\theoremstyle{definition}
\title{Endoscopy for unitary symmetric spaces}
\author{Spencer Leslie}
\date\today
\address{Department of Mathematics, Duke University, 120 Science Drive, Durham, NC, USA}
\email{lesliew@math.duke.edu}
\thanks{This work was partially supported by an AMS-Simons Travel Award and by NSF grant DMS-1902865}
\subjclass[2010]{Primary 11F70; Secondary 11F55, 11F85}
\keywords{Key words: Endoscopy, symmetric and spherical varieties, relative trace formulae, periods of automorphic forms, fundamental lemma}
\begin{document}

\begin{abstract}
Motivated by global applications, we propose a theory of relative endoscopic data and transfer factors for the symmetric pair $(U(2n),U(n)\times U(n))$ over a local field. We then formulate the smooth transfer conjecture and fundamental lemma, establish the existence of smooth transfer for many test functions, and prove the fundamental lemma for the symmetric pair $(U(4),U(2)\times U(2))$.
\end{abstract}

\maketitle
\setcounter{tocdepth}{1}
\tableofcontents
\section{Introduction}
This paper initiates a program with the goal of stabilizing the relative trace formula associated to symmetric pairs of the form $(U(2n),U(n)\times U(n))$, where $U(n)$ denotes a unitary group of rank $n$. This is the first example of a relative theory of endoscopy.

Our present aim is to develop a infinitesimal theory of relative endoscopic data and transfer factors for the symmetric space considered over any local field $F$. In particular, we formulate the smooth transfer conjecture and state the fundamental lemma when $F$ is non-archimedean. We establish the existence of smooth transfer for many test functions, and prove the fundamental lemma for the symmetric pair $(U(4),U(2)\times U(2))$ via an explicit computation. In a subsequent paper \cite{leslieFL}, we establish this fundamental lemma in full generality via a different technique. Among other things, this paper lays the necessary groundwork for this computation.

Let us describe the setup and motivation. Let $E/F$ be a quadratic extension of global fields, $\A_E$ and $\A_F$ the associated rings of adeles. Let $W_1$ and $W_2$ be two $d$ dimensional Hermitian spaces over $E$. The direct sum $W_1\oplus W_2$ is also a Hermitian space and we have the embedding of unitary groups $$U(W_1)\times U(W_2)\hra U(W_1\oplus W_2).$$
Let $\pi$ be an irreducible cuspidal automorphic representation of $U(W_1\oplus W_2)_{\A_F}$. Roughly, $\pi$ is said to  be \emph{distinguished} by the subgroup $U(W_1)\times U(W_2)$ if the \emph{period integral}
\begin{equation}\label{period of interest}
\displaystyle\int_{[U(W_1)\times U(W_2)]}\varphi(h)dh
\end{equation}
is not equal to zero for some vector $\varphi\in\pi$. Here, $[H]=H(F)\backslash H(\A_F)$ for any $F$-group $H$. The study of distinction of automorphic representations with respect to certain subgroups is a large and active area of automorphic representation theory, but this particular case has appeared in the recent literature in several distinct ways.
\subsection{Global Motivation}
Motivated by the study of the arithmetic of specials cycles in unitary Shimura varieties, Wei Zhang outlined in his 2018 IAS lecture \cite{youtube} a comparison of a relative trace formula on $\GL(W_1\oplus W_2)$ with one on $U(W_1\oplus W_2)$. On the linear group, we may consider period integrals over the subgroups $\GL(W_1)\times \GL(W_2)$ and $U(W_1\oplus W_2)$\footnote{In fact, one should take a different form of the unitary group. See \cite{GetzWambach} for the construction.}; on the unitary side, one considers periods of the form (\ref{period of interest}). Indeed, Chao Li and Wei Zhang have recently \cite{li2019kudla} established the arithmetic fundamental lemma associated to this comparison, with applications to the global Kudla-Rapoport conjecture and arithmetic Siegel-Weil formula. This comparison fits into the general framework for a relative theory of quadratic base change proposed in \cite{GetzWambach}, which in turn may be understood as a method of proving cases of the emerging relative Langlands program. 

For this comparison to be effective for global applications, several results are needed. To begin, one needs the fundamental lemma and smooth transfer to establish the preliminary comparison. This is already problematic as the unitary relative trace formula is not \emph{stable}: when we consider the action of $U(W_1)\times U(W_2)$ on the {symmetric variety} $U(W_1\oplus W_2)/U(W_1)\times U(W_2)$, invariant polynomials distinguish only \emph{geometric orbits}. Analogously to the Arthur--Selberg trace formula, one must ``stabilize" the orbital integrals arising in the unitary relative trace formula in order to affect a comparison between the two formulas.

The present paper lays the local foundations for this stabilization procedure.  Before describing our results, we note that one implication of the conjectured comparison was recently proved by Pollack, Wan, and Zydor \cite{pollack2019residue} under certain local assumptions via a different technique.

\subsection{Relative endoscopy} We now let $F$ be a local field and let $E/F$ be a quadratic field extension. As above, if $W_1$ and $W_2$ are $d$-dimensional Hermitian vector spaces, then $W=W_1\oplus W_2$ is a $2d$-dimensional Hermitian space with a distinguished involutive linear map: $\sig(w_1+w_2) = w_1-w_2$ for $w_i\in W_i$. This induces an involution on the unitary group $U(W)$ with the fixed point subgroup $U(W)^\sig=U(W_1)\times U(W_2)$. Letting $\fu(W)$ denote the Lie algebra of $U(W)$, the differential of $\sig$ induces a $\zz/2\zz$-grading
\[
\fu(W)=\fu(W)_0\oplus \fu(W)_1,
\]
where $\fu(W)_i$ is the $(-1)^i$-eigenspace of $\sigma$. Then the fixed-point group $U(W_1)\times U(W_2)$ acts via restriction of the adjoint action on $\fu(W)_1$. The pair $(U(W_1)\times U(W_2),\fu(W)_1)$ is called an \emph{infinitesimal symmetric pair} since $\fu(W)_1$ is the tangent space to the symmetric space $${\calq_d}=U(W)/(U(W_1)\times U(W_2))$$
at the distinguished $U(W_1)\times U(W_2)$- fixed point. 

In this paper, we develop a theory of elliptic endoscopy for the pair $(U(W_1)\times U(W_2),\fu(W)_1)$, postponing the theory for the symmetric space ${\calq_d}$ to a later paper. We do this both as the infinitesimal theory is simpler to state and also because the stabilization of the relative trace formula is expected to ultimately reduce to studying this case. This is partially motivated by the analogous reduction by Waldspurger \cite{WaldsLiealgebra} and \cite{Waldspurgerimplies} in the case of the Arthur-Selberg trace formula.

Recall (from \cite[Chapter 3]{RogawskiBook}, for example) that for the unitary group $U(W)$, an elliptic endoscopic datum is a triple $(U(V_a)\times U(V_b),s,\eta)$ where $V_n$ denotes a fixed $n$-dimensional split Hermitian space\footnote{We say Hermitian space $V$ is \emph{split} if there exists an isotropic subspace of maximal possible dimension and a self-dual lattice $\Lam\subset V$. This implies that the associated unitary group is quasi-split.}, 
\[
s\in \hat{U}(W)=\GL_{2d}(\cc)
\]
is a semi-simple element, and 
\[
\eta: \GL_a(\cc)\times \GL_b(\cc)\to \GL_{2d}(\cc)
\]induces an isomorphism between $\GL_a(\cc)\times \GL_b(\cc)$ and the connected component of the centralizer of $s$. Here $a+b=2d$. Motivated by the work of Sakellaridis-Venkatesh \cite{SakVenkperiods} and Knop-Schalke \cite{KnopSchalke} on the dual groups of spherical varieties, we consider the dual group of the symmetric variety ${\calq_d}$, 
\[
\hat{\calq}_d=\Sp_{2d}(\cc)\xrightarrow{\varphi_{d}}\GL_{2d}(\cc)=\hat{U}(W).
\]
While a full theory of relative endoscopy is not yet clear, we may na\"{i}vely expect that elliptic endoscopic spherical varieties of ${\calq_d}$ ought to correspond to spherical varieties of endoscopic groups of $U(W)$ that themselves correspond to centralizers semi-simple elements the dual group $\hat{\calq}_d=\Sp_{2d}(\cc)$. In particular, for each elliptic endoscopic subgroup $${\Sp_{2a}(\cc)\times\Sp_{2b}(\cc)\subset\Sp_{2d}(\cc)},$$ we might expect a diagram

\[
\begin{tikzcd}
\Sp_{2d}(\cc)\ar[r]&\GL_{2d}(\cc)\\
\Sp_{2a}(\cc)\times\Sp_{2b}(\cc)\ar[u]\ar[r]&\GL_{2a}(\cc)\times \GL_{2b}(\cc)\ar[u],
\end{tikzcd}
\]
where the bottom horizontal arrow is the dual group embedding associated to the symmetric space $\calq_a\times \calq_b$ of the group $U(V_{2a})\times U(V_{2b})$.

 Happily, this hope is true once we take certain pure inner forms into account. It is not yet clear to us how to place the above heuristic with dual groups on a firm (and generalizable) footing. Our method for establishing the requisite matching of regular semi-simple orbits and effective definition of transfer factors instead reduces to the endoscopic theory of $U(V_d)$ acting on its Lie algebra. The dual group interpretation outlined above only becomes clear \emph{a postiori}.

Recall that $W=W_1\oplus W_2$ is our $2d$-dimensional Hermitian space and let $\xi=(H,s,\eta)$ be an elliptic endoscopic datum for $U(W_1)$, where $H=U(V_a)\times U(V_b)$ with $d=a+b$. Fix representatives $\{\al\}$ and $\{\be\}$ of the isomorphism classes of Hermitian forms on the underlying vector spaces of $V_a$ and $V_b$. Then for each pair $(\al,\be)$, we have the Lie algebras
\begin{equation}\label{endo spaces}
\fu(V_a\oplus V_\al)\text{   and    }\fu(V_b\oplus V_\be),
\end{equation}
 where $V_\al$ simply denotes the Hermitian space $(V_a,\al)$, and similarly for $V_\be$. Each equipped with a natural involution $\sig_\al$ and $\sig_\be$ and the associated symmetric pairs
\[
\left(U(V_a)\times U(V_\al),\fu(V_a\oplus V_\al)_1\right) \text{ and }\left(U(V_b)\times U(V_\be),\fu(V_b\oplus V_\be)_1\right)
\]
 are lower rank analogues of our initial symmetric pair $\left(U(W_1)\times U(W_2),\fu(W)_1\right)$. 
\begin{Def}
We say the quintuple 
\[
(\xi,\al,\be)=(U(V_a)\times U(V_b), s, \eta,\al,\be)
\] is a \emph{relative elliptic endoscopic datum} and the direct sum of the $(-1)$-eigenspaces (\ref{endo spaces}) is an \textbf{endoscopic infinitesimal symmetric pair} for $\left(U(W_1)\times U(W_2),\fu(W)_1\right)$.
\end{Def}

With this definition, we show how to match regular semi-simple orbits and define the transfer of orbits with appropriate transfer factors in Section \ref{Section: relative endoscopy}. The key point is that we may relate the action of $U(W_1)\times U(W_2)$ on $\fu(W)_1$ to the adjoint action of $U(W_1)$ on its \emph{twisted Lie algebra} of Hermitian operators
\[
\Herm(W_1)=\{x\in \End(W_1): \la xu,v\ra=\la u, xv\ra\}.
\]
This is Proposition \ref{Lem: cat quotient Lie algebra 1}, realizing $\Herm(W_1)$ as the categorical quotient $\fu(W)_1//U(W_2)$.

Equipped with the above definition, we define smooth transfer of smooth compactly-supported test function as follows. Fix a regular semi-simple element $x\in \fu(W)_1$ and let $(\xi,\al,\be)$ be a relative endoscopic datum. This datum determines a character $\kappa$ we use to define the \emph{relative $\kappa$-orbital integral}
\begin{equation*}
\RO^\kappa(x,f) = \sum_{x'\sim_{st} x}\kappa(\inv(x,x'))\RO(x',f),
\end{equation*}
where $x'$ ranges over the $U(W_1)\times U(W_2)$-orbits in $\fu(W)_1$ in the same stable orbit as $x$ and the relative orbital integrals are as in Definition \ref{eqn: orbital int def}. When $\ka=1$, we write $\SRO=\RO^\ka$ and call this the relative stable orbital integral.

There is a good notion of when $x$ \textbf{matches} the pair $(x_a,x_b)\in \fu(V_a\oplus V_\al)_1\oplus\fu(V_b\oplus V_\be)_1$, and for such matching elements $(x_a,x_b)$ and $x$, we define the relative transfer factor
\begin{equation*}
\De_{rel}((x_a,x_b),x)
\end{equation*} in Section \ref{Section: relative endoscopy}.

\begin{Def}\label{Def: smooth transfer}We say $f\in C_c^\infty(\fu(W)_1)$ and $f_{\al,\be}\in C_c^\infty(\fu(V_a\oplus V_\al)_1\oplus\fu(V_b\oplus V_\be)_1)$ are \textbf{smooth transfers }(or say that they \textbf{match}) if the following conditions are satisfied:
\begin{enumerate}
\item For any matching orbits $x$ and $(x_a,x_b)$, we have an identify
\begin{equation}\label{eqn: transfer 1}
\SRO((x_a,x_b),f_{\al,\be})= \Delta_{rel}((x_a,x_b),x)\RO^\kappa(x,f).
\end{equation}
\item If there does not exist $x$ matching $(x_a,x_b)$, then
\begin{equation*}
\SRO((x_a,x_b),f_{\al,\be})= 0.
\end{equation*}
\end{enumerate}
\end{Def}

With this definition, we state Conjecture \ref{Conj: transfer exists}, asserting that smooth transfers exist for all smooth compactly-supported functions on $\fu(W)_1$. As a first check for our definition, we prove this conjecture for test functions supported in a certain open dense subset of $\fu(W)_1$ (see Proposition \ref{Prop: regular transfer}). 
\begin{Prop}
Let $f\in C^\infty_c(\fu(W)_1)$ and assume $\mathrm{supp}(f)$ is contained in the non-singular locus $\fu(W)_1^{iso}$ (see (\ref{regular locus}) below). Let $(U(V_a)\times U(V_b), s, \eta,\al,\be)$ be a {relative elliptic endoscopic datum}. 
 Then there exists $$f_{\al,\be}\in  C^\infty_c(\fu(V_a\oplus V_\al)_1\oplus\fu(V_b\oplus V_\be)_1)$$ such that $f$ and $f_{\al,\be}$ are smooth transfers of each other. 
\end{Prop}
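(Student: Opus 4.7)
The plan is to reduce the construction of $f_{\al,\be}$ to a matching problem on the twisted Lie algebra $\Herm(W_1)$, using the categorical quotient identification $\Herm(W_1)=\fu(W)_1//U(W_2)$ from Proposition \ref{Lem: cat quotient Lie algebra 1}, and then appeal to known smooth transfer for (twisted) Lie algebra endoscopy of unitary groups.

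The first step is to describe $\fu(W)_1^{iso}$ concretely and analyze the $U(W_2)$-action on it. Writing $x\in\fu(W)_1$ in block form with respect to $W=W_1\oplus W_2$, the element $x$ is determined by its off-diagonal block $A\in\Hom(W_1,W_2)$ together with its Hermitian-adjoint partner $A^*:W_2\to W_1$, and $x\in\fu(W)_1^{iso}$ should amount to $A$ being an isomorphism. On this open subset the $U(W_2)$-action is free with closed orbits, and the quotient map of Proposition \ref{Lem: cat quotient Lie algebra 1} identifies $\fu(W)_1^{iso}/U(W_2)$ with the open subvariety $\Herm(W_1)^{\times}\subset \Herm(W_1)$ of invertible Hermitian operators (sending $x\mapsto A^*A$). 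Consequently, for any $f\in C_c^\infty(\fu(W)_1^{iso})$, the partial orbital integral
$$\tilde f(y):=\int_{U(W_2)}f(\Ad(h^{-1})\tilde y)\,dh,\qquad\tilde y\mapsto y,$$
is a well-defined element of $C_c^\infty(\Herm(W_1)^{\times})$.

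The second step is to match endoscopic structures. By Fubini, both $\RO^\kappa(x,f)$ and $\SRO((x_a,x_b),\,\cdot\,)$ factor through the partial orbital integral, and on the non-singular locus the matching of orbits (controlled by the invariants of $A^*A$) together with the relative transfer factor $\De_{rel}$ specializes to the standard matching and transfer factors for the $U(W_1)$-adjoint action on $\Herm(W_1)^\times$ with endoscopic group $U(V_a)\times U(V_b)$ acting on $\Herm(V_a)^\times\oplus \Herm(V_b)^\times$. With this translation the required identity (\ref{eqn: transfer 1}) becomes the usual Lie-algebra transfer identity, for which the existence of a smooth matching function $\tilde f_{\al,\be}\in C_c^\infty(\Herm(V_a)^\times\oplus \Herm(V_b)^\times)$ is known (Waldspurger).

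The third step is to lift $\tilde f_{\al,\be}$ to $f_{\al,\be}\in C_c^\infty\bigl(\fu(V_a\oplus V_\al)_1^{iso}\oplus \fu(V_b\oplus V_\be)_1^{iso}\bigr)$ via the same categorical quotient description on the endoscopic side. The freeness of the $U(V_\al)\times U(V_\be)$-action on the non-singular locus permits a local smooth section of the quotient map, and a partition-of-unity argument produces a compactly supported smooth lift whose partial orbital integral recovers $\tilde f_{\al,\be}$.

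The main obstacle I anticipate is this final lifting step: one must simultaneously ensure smoothness, compact support, and that the partial orbital integrals come out correctly, all while staying inside the non-singular locus of the endoscopic side. A secondary check — that non-singular orbits on the symmetric side correspond to non-singular orbits on the endoscopic side, so the entire argument remains within the good open locus — should follow from the explicit matching of invariants via $A^*A$, but requires verification. Once the freeness and slice structure of the $U(W_2)$- and $U(V_\al)\times U(V_\be)$-actions on the relevant iso-loci is in hand, the argument reduces cleanly to invoking the known smooth transfer on twisted Lie algebras.
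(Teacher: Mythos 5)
Your overall strategy matches the paper's: push forward along the contraction $r$ to the twisted Lie algebra $\Herm(W_1)^{iso}$ (valid because the support assumption makes $r_!(f)$ compactly supported), invoke the known Lie-algebra transfer (Theorem \ref{Thm: smooth transfer lie algebra}) to get $f_{a,b}$ on $\Herm(V_a)\oplus\Herm(V_b)$, then transport back. Your step two — that the relative $\kappa$- and stable orbital integrals factor through the pushforward, and that $\De_{rel}$ is pulled back from the Hermitian side — is exactly the content of Lemma \ref{Lem: orbital reduction} and the definition (\ref{eqn: transfer factor}).

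The gap is in your third step, and it is a genuine one. You claim that freeness of the $U(V_\al)\times U(V_\be)$-action on $(\fu(V_a\oplus V_\al)_1\oplus\fu(V_b\oplus V_\be)_1)^{iso}$ plus a partition of unity produces a lift $f_{\al,\be}$ whose partial orbital integral recovers $\tilde f_{\al,\be}$. But the contraction map
\[
r_{\al,\be}:\bigl(\fu(V_a\oplus V_\al)_1\oplus\fu(V_b\oplus V_\be)_1\bigr)^{iso}\lra\Herm(V_a)^{iso}\oplus\Herm(V_b)^{iso}
\]
is a $U(V_\al)\times U(V_\be)$-torsor onto its image, and that image is a \emph{proper clopen subset}: by the torsor dichotomy (Corollary \ref{Cor: torsor rational}), the target decomposes as a disjoint union over pure inner forms $[\ep,\nu]\in H^1(F,U(V_a))\times H^1(F,U(V_b))$, with the $(\al,\be)$-piece being only one of them. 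Waldspurger's transfer produces $f_{a,b}$ with no control over which pieces its support meets, so no choice of $f_{\al,\be}$ on the single pure inner form $(\al,\be)$ can have a pushforward recovering $f_{a,b}$ on all of $\Herm(V_a)^{iso}\oplus\Herm(V_b)^{iso}$.

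The paper resolves this by invoking the full decomposition: after truncating $f_{a,b}$ by the indicator of $\Herm(V_a)^{iso}\oplus\Herm(V_b)^{iso}$ (harmless, since for $(\de_a,\de_b)$ outside the iso-locus the matching $\de$ also lies outside $\Herm(W_1)^{iso}$ and hence $\SO((\de_a,\de_b),f_{a,b})=0$), Proposition \ref{Prop: surjective schwartz} gives functions $f_{\ep,\nu}\in C_c^\infty((\fh_1^{\ep,\nu})^{iso})$ over \emph{all} pure inner forms with $f_{a,b}=\sum_{\ep,\nu}(r_{\ep,\nu})_!(f_{\ep,\nu})$, and one takes $f_{\al,\be}$ to be the $(\al,\be)$-component. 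The transfer identity then follows because the stable orbit of $(\de_a,\de_b)$ in $\Herm(V_a)^{iso}\oplus\Herm(V_b)^{iso}$ lies entirely in a single piece of the decomposition (the relevant piece is detected by the isomorphism class of the Hermitian form $\langle\cdot,\de_a\cdot\rangle_a\oplus\langle\cdot,\de_b\cdot\rangle_b$, which is a stable invariant), so only the $(\al,\be)$-component of $f_{a,b}$ contributes to $\SO((\de_a,\de_b),f_{a,b})$ when $(\de_a,\de_b)$ is in the image of $r_{\al,\be}$. You correctly flag the lifting as the delicate step, but to close it you need the pure inner form decomposition, not a local-sections-plus-partition-of-unity argument for a single $(\al,\be)$.
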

Our proof of this proposition relies on the good behavior of the categorical quotient $$\fu(W)_1\lra \fu(W)_1//U(W_2)\cong \Herm(W_1)$$ over the non-singular locus. This enables a reduction to the analogous setting of the twisted Lie algebra. The general conjecture will require other techniques.

When $E/F$ is an unramified extension of non-archimedean local fields, we also formulate the fundamental lemma for the ``unit element.'' More specifically, suppose that $V_d=W_1=W_2$ is split, and let $\Lam_d\subset V_d$ be a self-dual lattice. In this case,
\[
\fu(W)_1= \Hom_E(V_d,V_d) =\End(V_d),
\] 
and the choice of self-dual lattice $\Lam_d$ gives a natural compact open subring $\End(\Lam_d)\subset \End(V_d)$. Let $\bfun_{\End(\Lam_d)}$ denote the indicator function for this subring. This also induces an integral model $\mathbf{U}(\Lam_d)$ of $U(V)$. Setting $U(\Lam_d)\subset U(V)$ as the $\calo_F$-points, this gives a hyperspecial maximal compact subgroup. 

 Now suppose that $\xi=(U(V_a)\times U(V_b),s,\eta)$ is an endoscopic datum for $\Herm(V_d)$. Under our assumptions, we have $V_d\cong V_a\oplus V_b$ (see Lemma \ref{Lem: unramified isom}). We may assume further that $\Lam_d=\Lam_a\oplus \Lam_b$ for fixed self-dual lattices $\Lam_a\subset V_a$ and $\Lam_b\subset  V_b$.  There are only four possible pairs $(\al,\be)$, and we set $(\al_0,\be_0)$ to be the split pair.
 
 We equip these groups with Haar measures normalized so that the given hyperspecial maximal subgroups have volume $1$.
\begin{Thm}\label{Conj: full fundamental lemmaintro}
The function $\bfun_{\End(\Lam_d)}$ matches $\bfun_{\End(\Lam_a)}\otimes\bfun_{\End(\Lam_b)}$ if $(\al,\be) = (\al_0,\be_0)$ and matches $0$ otherwise.
\end{Thm}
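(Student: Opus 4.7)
The plan is to leverage Proposition \ref{Lem: cat quotient Lie algebra 1}, which identifies $\mathrm{Herm}(W_1)$ with the categorical quotient $\fu(W)_1 // U(W_2)$, to reduce the statement to an already-understood twisted fundamental lemma on $\mathrm{Herm}(V_d)$. The idea is that the unramified hyperspecial structure is preserved under the quotient map, so both the test functions and the transfer factors should descend in a controlled way, allowing the Jacquet--Rallis unitary twisted fundamental lemma to do the work.

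Concretely, I would first push forward along the quotient map $q \colon \fu(W)_1 \to \mathrm{Herm}(V_d)$. Since $\End(\Lam_d)$ is stable under the diagonal hyperspecial $U(\Lam_d) \subset U(W_2)$, an explicit Fubini computation combined with a Jacobian calculation on the regular semisimple locus should yield $q_\ast \bfun_{\End(\Lam_d)} = \bfun_{\mathrm{Herm}(\Lam_d)}$ up to a normalizing volume. Applied to the endoscopic side $\fu(V_a \oplus V_\al)_1 \oplus \fu(V_b \oplus V_\be)_1$ for the split pair $(\al_0,\be_0)$, the same recipe gives $\bfun_{\mathrm{Herm}(\Lam_a)} \otimes \bfun_{\mathrm{Herm}(\Lam_b)}$. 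I would then invoke the twisted unitary fundamental lemma for $(U(V_d), \mathrm{Herm}(V_d))$ with elliptic endoscopic group $U(V_a) \times U(V_b)$ (known in the unramified setting via Yun and the Cluckers--Hales--Loeser transfer principles), after checking that the relative transfer factor $\Delta_{rel}$ of Section \ref{Section: relative endoscopy}, once pushed down through $q$, coincides with the Langlands--Shelstad--Kottwitz factor on the twisted Lie algebra. This chain of reductions delivers the matching identity for the split pair.

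For the non-split pairs $(\al,\be) \neq (\al_0,\be_0)$, the vanishing should follow from an integral-structure argument: if $E/F$ is unramified, then a regular semisimple element of $\End(\Lam_d)$ has invariants that force any matching endoscopic datum to arise from self-dual lattices, so no matching $(x_a,x_b)$ exists in the non-split target and the stable orbital integrals on the right-hand side are supported away from orbits that could equal a pushforward from $\bfun_{\End(\Lam_d)}$. The main obstacle is the transfer-factor identification: one must trace through the definitions of Section \ref{Section: relative endoscopy} and verify that $\Delta_{rel}$, a ratio of symmetric-pair invariants, is compatible under $q$ with the standard twisted transfer factor, and that the Haar measures on the various centralizers have been normalized consistently with the hyperspecial choice. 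A secondary subtlety is the careful bookkeeping needed to pass from relative orbital integrals on $\fu(W)_1$ to ordinary twisted orbital integrals on $\mathrm{Herm}(V_d)$; this requires matching measures on the fibers of $q$ with those appearing in the definition of $\RO$ in Definition \ref{eqn: orbital int def}. Once these compatibilities are settled, the theorem follows by combining the split-case matching with the non-split vanishing.
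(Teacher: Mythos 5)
Your reduction to the twisted Lie algebra is the right geometric starting point (the paper also works through the contraction map $r$, and indeed the relative transfer factor $\Delta_{rel}$ is \emph{defined} in (\ref{eqn: transfer factor}) as the Hermitian transfer factor applied to the images under $r$, so the compatibility you flag as a ``main obstacle'' is a tautology by construction). But the central step of your argument is false: the pushforward $r_{!}\bfun_{\End(\Lam_d)}$ is \emph{not} $\bfun_{\Herm(\Lam_d)}$, nor is it so up to a normalizing constant. The fiber of $r$ over a nonsingular $\delta$ is a $U(W_2)$-torsor and $U(W_2)$ is noncompact, so the pushforward is a nontrivial orbital sum. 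Proposition \ref{Lem: standard pushforward} computes it explicitly for $d=2$: on $\Herm_{\val(\det)=n}$ with $n\geq 0$ even one gets
\[
\Phi_n=\sum_{k=0}^{n/2}\Bigl(\sum_{j=0}^{k}q^{j}\Bigr)\mathbf{1}_{\GL(\Lam)\cdot\varpi^{(k,n-k)}}
=\sum_{k=0}^{n/2}q^{k}\,\bfun_{\Herm(\Lam)_{n,k}},
\]
a function with unbounded (relatively compact but not compact) support and coefficients growing in $q$. Once the pushforward is this complicated, you cannot simply cite a known unit-element fundamental lemma on $\Herm(V_d)$: you are stuck with a genuinely new orbital-integral identity for this specific $\Phi$. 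That is exactly why the paper (Propositions \ref{Prop: orbital computation} and \ref{Prop: endoscopic side}) has to compute $\Orb^{\kappa}(\de_{\pm},\Phi_n)$ from scratch via Iwasawa decomposition, and why the general theorem is deferred to \cite{leslieFL}, where it is obtained through new local estimates and a global comparison rather than a local reduction to an already-known twisted fundamental lemma.

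Two smaller issues. First, the attribution is off: the fundamental lemma for the Lie algebra of unitary groups is Laumon--Ng\^o plus Waldspurger's descent (Yun's theorem is the Jacquet--Rallis fundamental lemma, a different statement, and Cluckers--Hales--Loeser only moves identities between residue characteristics). Second, the vanishing for $(\al,\be)\neq(\al_0,\be_0)$ does not follow from a lattice-integrality argument as you suggest; in the paper it drops out of the explicit $\kappa$-orbital integral formula (\ref{eqn:kappa computation}), which vanishes unless both eigenvalues of $r(x)$ are norms, precisely the obstruction to matching into the non-split endoscopic space.
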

This is proved in \cite{leslieFL} via a combination of new local results on orbital integrals and a new global comparison or relative trace formulas. In Section \ref{Section: rank 2}, we verify this statement directly for $(U(4),U(2)\times U(2))$ by reducing to a family of transfer statements on the twisted Lie algebra and explicitly computing all orbital integrals involved.

We expect that this fundamental lemma implies the smooth transfer conjecture \ref{Conj: transfer exists}. This is analogous to the work of Waldspurger \cite{Waldspurgerimplies} for the Arthur-Selberg trace formula and Chong Zhang \cite{chongzhang} for the Guo-Jacquet relative trace formula. This reduction is work-in-progress by the author.


The outline of the paper is as follows. In Section \ref{Section: unitary endoscopy}, we recall the necessary notions and details from the theory of endoscopy, focusing on the case of the unitary Lie algebra. In Section \ref{Section: orbital integrals}, we define the symmetric space under consideration and define the orbital integral to be studied. In Section \ref{Section: relative endoscopy}, we define our proposal for a theory of relative endoscopy data in this setting and state the associated transfer conjecture and fundamental lemma. We prove the existence of transfer for many functions in Proposition \ref{Prop: regular transfer}. In Section \ref{Section: rank 2}, we end by proving this fundamental lemma for the case of $(U(4),U(2)\times U(2))$ by an explicit computation. 

\subsection{Acknowledgements}
We wish to thank Wei Zhang for sharing personal computations relating to the stable comparison outlined in \cite{youtube} (in particular, for the computation in Proposition \ref{Lem: standard pushforward}) which led us to the conjectured notion of endoscopic spaces and for general advice and encouragement. We also thank Jayce Getz for suggesting we consider the notion of relative endoscopy, as well as for many useful conversations and invaluable advice. Finally, we thank the anonymous referee for several helpful comments.
\subsection{Notation}
Throughout we assume that $E/F$ is a quadratic extension of local fields. When non-archimedean, we assume $F$ has either odd or zero characteristic. Let $\calo\subset F$ denote the ring of integers and let $\calo_E\subset E$ be its integral closure in $E$. We denote by $\val:E^\times\to \zz$ the unique extension of the normalized valuation on $F$. Thus, if $\vp$ is a uniformizer of $F$, then $\val(\vp)=1$. 

We fix an algebraic closure $\Fbar$ and a separable closure $F^{sep}$ of $F$ and let $\Ga=\Gal(F^{sep}/F)$ denote the Galois group. Denote by $\omega_{E/F}: F^\times \to \cc^\times$ the quadratic character associated to the extension $E/F$ via local class field theory. Let $\Nm =\Nm_{E/F}$ denote the norm map and set $U(1)=\ker(\Nm)$.

 We only consider smooth affine algebraic varieties over $F$. We use boldface notation for an algebraic variety $\mathbf{Y}$ and use Roman font $Y=\textbf{Y}(F)$ for its $F$-points. This space is naturally endowed with a locally-compact topology. When $F$ is non-archimedean, this topology makes $Y$ an $l$-space (see \cite{BZ2}), and we will consider the Schwartz space $C_c^\infty(Y)$ of locally-constant, compactly-supported $\cc$-valued functions. 

When $(W,\la\cdot,\cdot\ra)$ is a Hermitian space over $E$, we denote by $U(W):=U(W,\la\cdot,\cdot\ra)$ the associated unitary group. We set $V_d$ to be a fixed split Hermitian space of dimension $d$, so that $U(V_d)$ is a fixed quasi-split unitary group of rank $d$. We also fix representatives $\{\tau\}$ of the isomorphism classes of Hermitian form on the underlying vector space $V_d$, and denote by $V_\tau$ the associated pure inner form with $U(V_\tau)$ the unitary group. 

Any unitary group $U(W)$ acts on its Lie algebra $\fu(W)$ as well as its \emph{twisted} Lie algebra
\[
\Herm(W)=\{x\in \End(W): \la xu,v\ra=\la u, xv\ra\}
\]
by the adjoint action. For any $\de\in \Herm(W)$, we denote by $T_\de\subset U(W)$ the centralizer. 

\section{Endoscopy}\label{Section: unitary endoscopy}
In this section, we recall the necessary facts from the theory of endoscopy for unitary Lie algebras. We follow \cite{Xiao}, to which we refer the reader interested in proofs. Let $W$ be a $d$ dimensional Hermitian space over $E$. As previously noted, we will work with the twisted Lie algebra
\[
\Herm(W)=\{x\in \End(W): \la xu,v\ra=\la u, xv\ra\}.
\]

Let $\de\in \Herm(W)$ be regular and semi-simple. The theory of rational canonical forms implies that there is a decomposition $F[\de]:=F[X]/(\chi_\de(x))=\prod_{i=1}^mF_i$, where $F_i/F$ is a field extension and $\chi_\de(x)$ is the characteristic polynomial of $\de$. Setting $E_i=E\otimes_F F_i$, we have
\[
E[\de]=\prod_i E_i=\prod_{i\in S_1}E_i\times\prod_{i\in S_2}F_i\oplus F_i,
\]
where $S_1=\{i: F_i\nsupseteq E\}$ and $S_2 =\{i: F_i\supseteq E\}$.
\begin{Lem}\label{Lem: centralizers}
Let $\de\in \Herm(W)$ be regular semi-simple, let $T_\de$ denote the centralizer of $\de$ in $U(W)$. Then 
\[
T_\de\cong  Z_{\U(W)}(F)E[\de]^\times/F[\de]^\times,
\]
where $Z_{U(W)}(F)$ denotes the center of $U(W)$. Moreover, $H^1(F,T_\de)=\prod_{S_1}\zz/2\zz$ and 
\[
\calc(T_\de/F):=\ker\left(H^1(F,T_\de)\to H^1_{ab}(F,U(W))\right)=\ker\left(\prod_{S_1}\zz/2\zz\to \zz/2\zz\right),
\]
where $H_{ab}^1$ denotes abelianized cohomology in the sense of \cite{Borovoi} and the map on cohomology is the summation of the factors.
\end{Lem}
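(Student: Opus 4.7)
My plan is to realize $T_\delta$ concretely from the $E[\delta]$-module structure on $W$, then compute its Galois cohomology factor by factor via Shapiro's lemma and local class field theory.

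Since $\delta$ is regular semisimple, $W$ is free of rank one over the commutative $E$-algebra $E[\delta]$, and the centralizer of $\delta$ in $\GL_E(W)$ is exactly $E[\delta]^\times$. The condition $\delta^* = \delta$ implies that the Hermitian adjoint on $\End_E(W)$ preserves $E[\delta]$, where it restricts to the involution $\tau$ induced by $\Gal(E/F)$ on the $E$-factor and trivial on $F[\delta]$. Thus $g\in E[\delta]^\times$ lies in $U(W)$ iff $g\,\tau(g)=1$, so
\[
T_\delta(F) \;=\; \{\,z \in E[\delta]^\times : z\,\tau(z) = 1\,\}.
\]
The Hilbert-90 map $z\mapsto z/\tau(z)$ surjects $E[\delta]^\times$ onto this norm-one subgroup with kernel $F[\delta]^\times$, and the scalar inclusion $Z_{U(W)}(F) \hookrightarrow E[\delta]^\times$ yields the asserted presentation.

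Next I would decompose $E[\delta] = \prod_i E_i$ according to $F[\delta] = \prod_i F_i$. For $i \in S_1$, $E_i = E\cdot F_i$ is a quadratic extension of $F_i$ with $\tau|_{E_i}$ its nontrivial automorphism, so the $i$th factor of $T_\delta$ is $\Res_{F_i/F}U_1(E_i/F_i)$ (the rank-one unitary group, restricted to $F$). For $i \in S_2$, the separability of $E/F$ forces $E_i \cong F_i \oplus F_i$ with $\tau$ the swap, whose norm-one subgroup is $\Res_{F_i/F}\Gm$. Shapiro's lemma reduces $H^1(F,T_\delta)$ to $\prod_{S_1} H^1(F_i,U_1(E_i/F_i))\times \prod_{S_2} H^1(F_i,\Gm)$; Hilbert 90 annihilates the $S_2$ contribution, and for $i \in S_1$ the short exact sequence
\[
1 \to U_1(E_i/F_i) \to \Res_{E_i/F_i}\Gm \xrightarrow{\Nm_{E_i/F_i}} \Gm \to 1
\]
together with Hilbert 90 identifies $H^1(F_i,U_1(E_i/F_i))$ with $F_i^\times/\Nm_{E_i/F_i}(E_i^\times)\cong \zz/2\zz$.

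For $\calc(T_\delta/F)$, the determinant $\det: U(W) \to U_1(E/F)$ identifies $H^1_{ab}(F,U(W))$ with $H^1(F, U_1(E/F)) = \zz/2\zz$ (standard for reductive groups with simply connected derived subgroup). A norm-one element $u \in U_1(E_i/F_i)$ acts by scalar multiplication on its $\delta$-eigenspace, so $\det(u) = \Nm_{E_i/E}(u)$. Chasing the natural commutative diagram of Kummer-type sequences, the induced map on $H^1$ becomes $F_i^\times/\Nm_{E_i/F_i}(E_i^\times) \xrightarrow{\Nm_{F_i/F}} F^\times/\Nm_{E/F}(E^\times)$; local reciprocity together with the canonical isomorphism $\Gal(E_i/F_i) \cong \Gal(E/F)$ (an isomorphism since $E_i = E\cdot F_i$) forces this map to descend to the identity on $\zz/2\zz$. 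Summing over $S_1$ gives the claimed summation map, whose kernel is $\calc(T_\delta/F)$. The principal obstacle is this last identification --- that the $\Nm_{F_i/F}$-map descends to the identity on the order-two quotients --- which is a direct consequence of the functoriality of local reciprocity, though correctly lining up orientations is the only substantive point; the remainder is bookkeeping with induced tori, Hilbert 90, and Shapiro's lemma.
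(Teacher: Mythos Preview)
Your argument is correct and is essentially the standard one. The paper itself does not supply a proof: it simply cites \cite[3.4]{RogawskiBook}, so your write-up is strictly more detailed than what appears in the paper. The realization of $T_\delta$ as the norm-one subgroup of $E[\delta]^\times$ via the Hermitian adjoint, the factor-by-factor computation of $H^1$ through Shapiro's lemma and Hilbert~90, and the identification of the map to $H^1_{ab}(F,U(W))\cong H^1(F,U_1(E/F))$ via the determinant and local reciprocity are exactly the ingredients one would expect Rogawski's argument to contain.

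One small remark: your Hilbert~90 surjection already gives $T_\delta(F)\cong E[\delta]^\times/F[\delta]^\times$, so the factor $Z_{U(W)}(F)$ in the displayed isomorphism of the lemma is redundant (it lies in the image of the Hilbert~90 map). This is a harmless redundancy in the statement rather than a gap in your proof. Your final paragraph correctly isolates the only non-formal step, namely that $\Nm_{F_i/F}$ induces the identity on the order-two Galois groups via local reciprocity; this is indeed immediate from functoriality once one uses $E_i=E\cdot F_i$ to identify $\Gal(E_i/F_i)\cong\Gal(E/F)$.
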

\begin{proof}
This is proved, for example, in \cite[3.4]{RogawskiBook}.
\end{proof}

Set $T_{S_1}\cong Z_{\U(W)}(F)\prod_{i\in S_1}E_i^\times/F_i^\times$ for the unique maximal compact subgroup of $T_\de$. We choose the unique measure $dt$ on $T_\de$ giving this subgroup volume $1$.

As we are studying the geometric theory of endoscopy (that is, $\ka$-orbital integrals), it is natural to use abelianized cohomology as in the lemma. If we define
\[
\cald(T_\de/F):=\ker\left(H^1(F,T_\de)\to H^1(F,U(W))\right),
\]
it is well known that the set of rational conjugacy classes $\calo_{st}(\de)$ in the stable conjugacy class of $y$ are in natural bijection with $\cald(T_\de/F)$. This is given by the map
\begin{equation}\label{invariant cohom}
\inv(\de,-):\calo_{st}(\de)\iso \cald(T_\de/F),
\end{equation} where
\[
[\de']\mapsto \inv(\de,\de'):=[\sig\in \Gal(E/F)\mapsto g^{-1}\sig(g)], 
\]
for any $g\in \GL(W)$ such that $\de'=\Ad(g)(\de).$ There is always an injective map
\[
\cald(T_\de/F)\lra \calc(T_\de/F),
\]
induced by the (surjective) map $H^1(F,U(W))\to H_{ab}^1(F,U(W))$. This latter map is an isomorphism when $F$ is a non-archimedean local field. In particular,  $\cald(T_\de/F)\cong \calc(T_\de/F)$ in this case.

When $F=\rr,$ the pointed set $\cald(T_\de/F)$ need not even be a group and the difference between the two is important. Subtle notions such as $K$-groups have been introduced to, among other things, understand spectral implications of this distinction. See \cite{LabesseSnowbird} for more details.


\subsection{Endoscopy for unitary Lie algebras}\label{Section: endoscopy roundup}
The general construction of elliptic endoscopic data and matching of stable orbits is standard, though rather involved. See \cite{RogawskiBook} for a review. Happily, our present case allows for a simplified presentation, enabling us to avoid several technicalities.

An elliptic endoscopic datum for $\Herm(W)$ is the same as a datum for the group $U(W),$ namely a triple $$(U(V_a)\times U(V_b),s,\eta),$$  where $a+b=d$. Here $s\in \hat{U}(W)$ a semi-simple element of the Langlands dual group of $U(W)$, and an embedding $$\eta:\hat{U}(V_a)\times\hat{U}(V_b)\hra \hat{U}(W)$$ identifying $\hat{U}(V_a)\times\hat{U}(V_b)$ with the neutral component of the centralizer of $s$ in the $L$-group ${}^LU(W)$. Fixing such a datum, we consider the endoscopic Lie algebra $$\Herm(V_a)\oplus \Herm(V_b).$$ Let $\de\in \Herm(W)$ and $(\de_a,\de_b)\in\Herm(V_a)\oplus \Herm(V_b)$ be regular semi-simple. 

Denote $W_{a,b}=V_a\oplus V_b$. In the non-archimedean case, the isomorphism class of $W_{a,b}$ is uniquely determined by those of $V_a$ and $V_b$ \cite[Theorem 3.1.1]{Jacobowitz}. For $F=\rr$, we can and do choose the form on $W_{a,b}$ to have signature $(n,n)$ or $(n+1,n)$, depending on the parity of $d$. In particular, $U(W_{a,b})$ is always quasi-split. 

\subsection{Matching of orbits}We first recall the notion of Jacquet--Langlands transfer between two non-isomorphic Hermitian spaces $W$ and $W'$. If we identify the underlying vector spaces (but not necessarily the Hermitian structures)
\begin{equation}\label{eqn: iso not Herm}
  W\cong E^n\cong W',  
\end{equation}
we have embeddings 
\[
\Herm(W),\:\Herm(W')\hra\fgl_n(E).
\]
Then $\de\in \Herm(W)$ and $\de'\in \Herm(W')$ are said to be \emph{Jacquet--Langlands transfers} if they are $\GL_n(E)$-conjugate in $\fgl_n(E).$  This is well defined since the above embeddings are determined up to $\GL_n(E)$-conjugacy. Note that if $\de$ and $\de'$ are Jacquet--Langlands transfers, then
\[
\de'=\Ad(g)(\de)
\]
for some $g\in \GL(W)$ and we obtain a well-defined cohomology class
\[
\inv(\de,\de'):=[\sig\in \Gal(E/F)\mapsto g^{-1}\sig(g)]\in H^1(F,T_\de)
\]
extending the invariant map to $\cald(T_\de/F)$.

\begin{Def}\label{Def: endoscopic matching}
In the case that $W'=W_{a,b}$, we say that $y$ and $(\de_a,\de_b)$ are \textbf{transfers (or are said to match)} if they are Jacquet--Langlands transfers in the above sense.
\end{Def}

For later purposes, note that we have an embedding
 $$\phi_{a,b}:\Herm(V_a)\oplus\Herm(V_b)\hra \Herm(W_{a,b}),$$ well defined up to conjugation by $U(W_{a,b})$. If $W\cong W_{a,b}$, we say that a matching pair $y$ and $(\de_a,\de_b)$ are a \textbf{\emph{nice matching pair}} if we may choose $\phi_{a,b}$ so that
 \[
 \phi_{a,b}(\de_a,\de_b) = \de.
 \]

\subsubsection{Orbital integrals}\label{Section:endoscopy character}
For $\de\in \Herm(W)^{rss}$ and $f\in C_c^\infty(\Herm(W))$, we define the orbital integral
\[
\Orb(\de,f)=\int_{T_\de\backslash U(W)}f(g^{-1}\de g)d\dot{g},
\]
where $dg$ is a Haar measure on $U(W)$, $dt$ is the unique normalized Haar measure on the torus $T_\de$, and $d\dot{g}$ is the invariant measure such that $dt d\dot{g}=dg.$

To an elliptic endoscopic datum $(U(V_a)\times U(V_b),s,\eta)$ and regular semi-simple element $\de\in \Herm(W)$, there is a natural character 
\[
\ka:\calc(T_\de/F)\to \cc^\times,
\]
which may be computed as follows. For matching elements $\de$ and $(\de_a,\de_b)$,
\begin{equation}\label{eqn: cohom decomp}
H^1(F,T_\de)=\prod_{S_1}\zz/2\zz=\prod_{S_1(a)}\zz/2\zz\times\prod_{S_1(b)}\zz/2\zz=H^1(F,T_{\de_a}\times T_{\de_b}),
\end{equation}
where the notation indicates which elements of $S_1$ arise from the torus $T_{\de_a}$ or $T_{\de_b}$.
\begin{Lem}\cite[Proposition 3.10]{Xiao}\label{Lem: endo character}
 Consider the character $\tilde{\ka}: H^1(F,T_\de)\to \cc^\times$ such that on each $\zz/2\zz$-factor arising from $S_1(a)$, $\tilde{\ka}$ is the trivial map, while it is the unique non-trivial map on each $\zz/2\zz$-factor arising from $S_1(b)$. Then $$\ka=\tilde{\ka}|_{\calc(T_\de/F)}.$$ 
\end{Lem}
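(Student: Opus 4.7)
The plan is to identify $\kappa$ as the character on $H^1(F,T_\delta)$ Tate--Nakayama dual to the image of $s$ in the component group $\pi_0(\hat{T}_\delta^\Gamma)$, and then to trace the effect of the matching $\delta \leftrightarrow (\delta_a,\delta_b)$ on both the dual torus and the element $s$.

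First, I would unpack the cohomological identification from Lemma \ref{Lem: centralizers}. The decomposition $E[\delta] = \prod_{S_1} E_i \times \prod_{S_2}(F_i\oplus F_i)$ yields an isomorphism $T_\delta \cong Z_{U(W)}(F)\cdot\prod_{i\in S_1} E_i^\times/F_i^\times \times \prod_{i\in S_2}F_i^\times$, and by Hilbert 90 and local class field theory each factor in $S_1$ contributes exactly $H^1(F, E_i^\times/F_i^\times)\cong F_i^\times/\Nm_{E_i/F_i}(E_i^\times)\cong \zz/2\zz$, while $S_2$-factors contribute nothing. On the dual side, the dual torus $\hat T_\delta$ admits a parallel product decomposition, and the local Tate duality pairing sends each $\zz/2\zz$-factor in $S_1$ to pairing against $\{\pm 1\}$ in the corresponding factor of $\hat T_\delta$.

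Second, I would determine the image of $s$ in $\hat{T}_\delta$. By the standard form of the datum $(U(V_a)\times U(V_b),s,\eta)$, we may choose $s$ so that under $\eta$ it acts by $+1$ on $\hat{U}(V_a)$ and by $-1$ on $\hat{U}(V_b)$. The key step is then to argue that for a matching pair $\delta$ and $(\delta_a,\delta_b)$ the natural embedding of centralizers gives a compatible identification of dual tori $\hat{T}_\delta \cong \hat{T}_{\delta_a}\times \hat{T}_{\delta_b}$, under which $s$ restricts to the identity on the first factor and to the element whose value on each $\zz/2\zz$ is $-1$ on the second. This is where I would invoke the Jacquet--Langlands matching (Definition \ref{Def: endoscopic matching}): since $\delta$ and $(\delta_a,\delta_b)$ are $\GL_n(E)$-conjugate, their characteristic polynomials agree with multiplicities, and the indexing set $S_1$ splits canonically as $S_1(a)\sqcup S_1(b)$ according to which factor $\delta_a$ or $\delta_b$ each irreducible factor of $\chi_\delta$ appears in.

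Third, I would pair $s$ with $H^1(F,T_\delta)$ factor-by-factor: on an $S_1(a)$-factor, $s$ contributes $+1$ so the character is trivial; on an $S_1(b)$-factor, $s$ contributes $-1$, which pairs non-trivially with the non-norm class generating $F_i^\times/\Nm(E_i^\times)$. Restricting this character $\tilde{\kappa}$ to the subgroup $\calc(T_\delta/F) = \ker(\sum: \prod_{S_1}\zz/2\zz\to \zz/2\zz)$ gives precisely the character $\kappa$.

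The main obstacle is the compatibility of dual tori claimed in step two: making rigorous the assertion that the Jacquet--Langlands matching (phrased in terms of $\GL_n(E)$-conjugacy on the twisted Lie algebras) induces the desired identification $\hat{T}_\delta \cong \hat{T}_{\delta_a}\times \hat{T}_{\delta_b}$ compatibly with $\eta$. Once this is in hand the pairing computation is essentially bookkeeping with local class field theory.
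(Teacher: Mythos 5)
The paper does not prove this lemma: it is cited directly to Xiao, Proposition 3.10, and the surrounding text merely records the decomposition (\ref{eqn: cohom decomp}) as context. So there is no in-paper argument to compare against.

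That said, your sketch is the standard and essentially correct route. The character $\kappa$ attached to an endoscopic datum $(H,s,\eta)$ at a regular semisimple $\delta$ is indeed defined, via Kottwitz's reformulation of Tate--Nakayama duality, by pairing $H^1(F,T_\delta)$ against the class of $s$ in the relevant component group of $\hat T_\delta$; and the computation does reduce to tracking $s$ through the product decomposition $\hat T_\delta \cong \hat T_{\delta_a}\times \hat T_{\delta_b}$. A few things worth tightening if you were to write this out in full: (i) $s$ should be taken $\Gamma$-fixed and its class lives in $\pi_0(\hat T_\delta^\Gamma)$ only modulo $Z(\hat U(W))^\Gamma$, which is precisely why $\kappa$ is canonically defined only on the subgroup $\calc(T_\delta/F)$ and why the lemma speaks of an \emph{extension} $\tilde\kappa$ to all of $H^1(F,T_\delta)$; (ii) the identification $\hat T_\delta\cong \hat T_{\delta_a}\times\hat T_{\delta_b}$ that you flag as the main obstacle is in fact what the paper already encodes in the decomposition $S_1 = S_1(a)\sqcup S_1(b)$ of (\ref{eqn: cohom decomp}), coming from the Jacquet--Langlands matching preserving the factorization of the characteristic polynomial. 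So the ``key step'' you isolate is real, but it is already supplied by the displayed equation preceding the lemma, not something you would need to re-derive. With those points acknowledged, your proof plan is sound and presumably coincides in substance with Xiao's.
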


Using the invariant map
\begin{equation*}
\inv(\de,-):\calo_{st}(\de)\iso \cald(T_\de/F)\hra \calc(T_\de/F),
\end{equation*}we form the $\ka$-orbital integral of $f\in C_c^\infty(\Herm(W_1))$
\[
\Orb^\ka(\de,f) = \sum_{\de'\sim_{st}\de}\ka(\inv(\de,\de'))\Orb(\de',f).
\]
When $\ka=1$ is trivial, write $\SO=\Orb^\ka.$
\subsubsection{Transfer factors}\label{Section: transfer factor}
We now recall the \emph{transfer factor} of Langlands-Sheldstad and Kottwitz. This is a function
\[
\De:[\Herm(V_a)\oplus \Herm(V_b)]^{rss}\times  \Herm(W)^{rss}\to \cc.
\]
The two important properties are
\begin{enumerate}
    \item $\De((\de_a,\de_b),\de) = 0$ if $\de$ does not match $(\de_a,\de_b),$ and
    \item if $\de$ is stably conjugate to $\de'$, then 
    \[
    \De((\de_a,\de_b),\de)\Orb^\ka(\de,f) = \De((\de_a,\de_b),\de')\Orb^\ka(\de',f).
    \]
\end{enumerate}
While the general definition, given in \cite{LanglandsShelstad1} for the group case and \cite{kottwitztransfer} in the quasi-split Lie algebra setting, is subtle, our present setting enjoys the following simplified formulation. 



When $\de\in \Herm(W)$ and $(\de_a,\de_b)\in \Herm(V_a)\oplus \Herm(V_b)$ do not match, we set
\[
\De((\de_a,\de_b),\de)=0.
\]
Now suppose that $\de$ and $(\de_a,\de_b)$ match. We define the relative discriminant
\[
D(\de)=\prod_{x_a,x_b}(x_a-x_b),
\]
where $x_a$ (resp. $x_b$) ranges over the eigenvalues of $\de_a$ (resp. $\de_b$) in $\Fbar$. 
\begin{Rem}
This is precisely the quotient of the standard Weyl discriminants that occurs in the factor $\De_{IV}$ in \cite{LanglandsShelstad1}. It is well known that the magnitude of $D(\de)$ measures the difference in asymptotic behavior of orbital integrals on $\Herm(W)$ and $\Herm(V_a)\oplus \Herm(V_b)$.
\end{Rem}
Recall our notation $W_{a,b}=V_a\oplus V_b$ and first assume that $W\cong W_{a,b}$ and that $\de$ and $(\de_a,\de_b)$ are a nice matching pair. In this case, the transfer factor is then given by
\begin{equation}\label{nice matching}
\De((\de_a,\de_b),\de):=\omega_{E/F}(D(\de))|D(\de)|_F,
\end{equation}
where $\omega_{E/F}$ is the quadratic character associated to $E/F$.

Now for any matching pair $\de$ and $(\de_a,\de_b)$, let
\[
\de'=\phi_{a,b}(\de_a,\de_b)\in \Herm(W_{a,b}).
\]
As discussed in Section \ref{Section: endoscopy roundup}, $\de$ and $\de'$ are Jacquet--Langlands transfers of each other and we set
\[
\De((\de_a,\de_b),\de) = \kappa(\inv(\de,\de'))\omega_{E/F}(D(\de))|D(\de)|_F,
\]
where $\ka:H^1(F,T_\de)\to \cc^\times$ is the character arising from the datum $(U(V_a)\times U(V_b),s,\eta)$ and $\inv$ is the extension of the invariant map discussed in Section \ref{Section: endoscopy roundup}. 
\begin{Rem}
For the interested reader, when $F$ is non-archimedean and $U(W)$ is quasi-split, our transfer factor agrees with the Lie algebra transfer factor studied in \cite{kottwitztransfer}, multiplied by the discriminant factor $\De_{IV}$. This formulation simply chooses a different distinguished conjugacy class in the stable orbit. See Remark \ref{Rem: kostant section}.
\end{Rem}

\subsection{Smooth transfer}
A pair of functions 
\[f\in C_c^\infty(\Herm(W))\:\text{ and }\:f_{a,b}\in C^\infty_c(\Herm(V_a)\oplus \Herm(V_b))
\]
are said to be smooth transfers (or matching functions) if the following conditions are satisfied:
\begin{enumerate}
\item for any matching elements regular semi-simple elements $\de$ and $(\de_a,\de_b)$,
\begin{equation*}
\SO((\de_a,\de_b),f_{a,b})= \Delta((\de_a,\de_b),\de)\Orb^\kappa(\de,f);
\end{equation*}
\item if there does not exist $y$ matching $(\de_a,\de_b)$, then
\begin{equation*}
\SO((\de_a,\de_b),f_{a,b})= 0.
\end{equation*}
\end{enumerate}
The following theorem follows by combining \cite{LaumonNgo}, \cite{WaldsCharacteristic}, and \cite{Waldstransfert}.
\begin{Thm}\label{Thm: smooth transfer lie algebra}
For any $f\in C_c^\infty(\Herm(W_1))$, there exists a smooth transfer $f_{a,b}\in C^\infty_c(\Herm(V_a)\oplus \Herm(V_b))$.
\end{Thm}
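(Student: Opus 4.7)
The plan is to reduce the twisted-Lie-algebra statement to the standard Lie algebra setting and then invoke the three cited results. First I would observe that, writing $E=F(\sqrt{d})$, the map $x\mapsto \sqrt{d}\cdot x$ defines a $U(W)$-equivariant isomorphism $\fu(W)\cong \Herm(W)$ for the adjoint action (multiplication by a scalar commutes with conjugation). Under this identification, regular semisimple orbits, centralizers $T_\de$, the abelianized cohomology $\calc(T_\de/F)$, the invariant map $\inv$, the endoscopic character $\ka$ of Lemma \ref{Lem: endo character}, and the matching condition of Definition \ref{Def: endoscopic matching} all correspond. The normalization of the transfer factor $\De$ of Section \ref{Section: transfer factor} differs from the Langlands--Shelstad--Kottwitz Lie algebra factor of \cite{kottwitztransfer} only by the discriminant factor $\De_{IV}$ and by a choice of base point in the stable orbit, both of which are constant on each stable conjugacy class. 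Hence matching in our sense is equivalent to Kottwitz-matching after this identification, and it suffices to produce the transfer for the standard Lie algebra $\fu(W)$.

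For non-archimedean $F$, I would then proceed through the now-standard chain. Laumon--Ngô \cite{LaumonNgo} establishes the Lie algebra fundamental lemma for the unit element of the spherical Hecke algebra for unitary groups over local fields of positive characteristic, via the geometry of the Hitchin fibration and the support theorem. Waldspurger's close-field theorem \cite{WaldsCharacteristic} transports this fundamental lemma from characteristic $p$ to the characteristic-zero local fields of odd residue characteristic in our setting, by matching orbital integrals and transfer factors across pairs of local fields with sufficiently close residual data. Finally, Waldspurger's reduction \cite{Waldstransfert} shows that for classical Lie algebras the (Hecke-algebra) fundamental lemma implies the existence of smooth transfer for every $f\in C_c^\infty(\fu(W))$, by a local harmonic-analytic descent argument. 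Pulling back via the isomorphism above yields the required $f_{a,b}\in C_c^\infty(\Herm(V_a)\oplus \Herm(V_b))$. The archimedean case is handled by Shelstad's transfer theorem for real Lie algebras, which fits the same Kottwitz framework and applies verbatim to $\fu(W)$.

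The main point of vigilance, and essentially the only obstacle, is bookkeeping: one must verify that our normalization of $\De$ (in particular the extra factor $\omega_{E/F}(D(\de))|D(\de)|_F$ and our extension of $\inv$ to handle Jacquet--Langlands transfers across pure inner forms) matches the Kottwitz convention on the nose across every stable orbit, and that the measure normalization on $T_\de$ from Lemma \ref{Lem: centralizers} is the one used in \cite{Waldstransfert}. Once this compatibility of conventions is established, the theorem is a formal consequence of \cite{LaumonNgo}, \cite{WaldsCharacteristic}, and \cite{Waldstransfert}.
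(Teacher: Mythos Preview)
Your proposal is correct and aligns with the paper's own treatment: the paper does not give an independent argument but simply records that the theorem ``follows by combining \cite{LaumonNgo}, \cite{WaldsCharacteristic}, and \cite{Waldstransfert}.'' Your sketch is a faithful unpacking of precisely that chain (Laumon--Ng\^o for the positive-characteristic fundamental lemma, Waldspurger's close-fields result to pass to characteristic zero, and Waldspurger's reduction from the fundamental lemma to smooth transfer), together with the standard observation that multiplication by a trace-zero element of $E^\times$ gives a $U(W)$-equivariant identification $\fu(W)\cong\Herm(W)$, so that the twisted Lie algebra statement is equivalent to the ordinary one. Your added remark that the archimedean case requires Shelstad rather than the three cited works is a genuine refinement, since those references are non-archimedean and the paper is silent on this point.
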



\section{The Lie algebra of the symmetric space}\label{Section: orbital integrals}
Let $E/F$ be a quadratic extension of local fields of odd residue characteristic. Let $W_1$ and $W_2$ be two Hermitian spaces of dimension $d$ over $E$. Let $\fu(W)$ denote the Lie algebra of $U(W)$, where $W=W_1\oplus W_2$ is a $2d$ dimensional Hermitian space. The differential of the involution $\sig$ acts on $\fu(W)$ by the same action and induces a $\zz/2\zz$-grading
\[
\fu(W)= \fu(W)_0\oplus \fu(W)_1,
\]
where $\fu(W)_i$ is the $(-1)^i$-eigenspace of the map $\sig$. 
\begin{Lem}\label{make life easy}
We have natural identifications 
\[
\fu(W)_0=\fu(W_1)\oplus \fu(W_2),\text{  and   }\fu(W)_1=\Hom_{E}(W_2,W_1).
\]
Here $U(W_1)\times U(W_2)$ acts on $\fu(W)_1$ by the restriction of the adjoint action. In terms of $W_1$ and $W_2$, the action is given by $(g,h)\cdot \varphi = g\circ \varphi \circ h^{-1}$. 
\end{Lem}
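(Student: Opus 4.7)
The plan is to compute both eigenspaces directly using the block decomposition of operators with respect to $W=W_1\oplus W_2$. Everything reduces to a book-keeping exercise once one sees how the Hermitian adjointness condition collapses the apparent two-sided data in $\fu(W)_1$ to a single map.

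First I would record that $\sigma$ is $E$-linear on $W$ and, with respect to $W = W_1\oplus W_2$, corresponds to $\diag(I_{W_1},-I_{W_2})$. Any $X\in\End_E(W)$ has a block form $X=\left(\begin{smallmatrix}A & B\\ C & D\end{smallmatrix}\right)$ with $A\in\End(W_1)$, $B\in\Hom_E(W_2,W_1)$, $C\in\Hom_E(W_1,W_2)$, $D\in\End(W_2)$, and conjugation by $\sigma$ sends this to $\left(\begin{smallmatrix}A & -B\\ -C & D\end{smallmatrix}\right)$. Hence the $(+1)$-eigenspace consists of block-diagonal operators and the $(-1)$-eigenspace consists of block-antidiagonal operators.

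Next I would impose the skew-Hermitian condition $\langle Xu,v\rangle + \langle u,Xv\rangle = 0$, using that $W_1 \perp W_2$. For $X = \diag(A,D)$, testing pairs $(u,v)$ in $W_i\times W_i$ forces $A\in\fu(W_1)$ and $D\in\fu(W_2)$, while the mixed pairings vanish automatically; this gives the first identification $\fu(W)_0 = \fu(W_1)\oplus\fu(W_2)$. For $X = \left(\begin{smallmatrix}0 & B\\ C & 0\end{smallmatrix}\right)$, testing $(u,v)\in W_2\times W_1$ yields $\langle Bu,v\rangle_{W_1} = -\langle u,Cv\rangle_{W_2}$, which says exactly that $C = -B^{\ast}$, where $B^{\ast}\colon W_1\to W_2$ denotes the Hermitian adjoint. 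Thus the projection $X\mapsto B$ is an $E$-linear isomorphism $\fu(W)_1 \iso \Hom_E(W_2,W_1)$.

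Finally, to verify the action, note that $U(W)^{\sigma}$ is the set of $g\in U(W)$ commuting with $\sigma$, which by the same block-diagonal computation equals the set of $g = \diag(g_1,g_2)$ with $g_i\in U(W_i)$; this recovers the identification $U(W)^{\sigma}=U(W_1)\times U(W_2)$ used implicitly. Computing the conjugation action
\[
\diag(g_1,g_2)\begin{pmatrix}0 & B\\ -B^{\ast} & 0\end{pmatrix}\diag(g_1^{-1},g_2^{-1}) = \begin{pmatrix}0 & g_1 B g_2^{-1}\\ -g_2 B^{\ast} g_1^{-1} & 0\end{pmatrix}
\]
shows that under the identification $X\leftrightarrow B$ the action is $(g_1,g_2)\cdot \varphi = g_1\circ\varphi\circ g_2^{-1}$, as claimed. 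There is no real obstacle here; the only mild subtlety is remembering that the off-diagonal block $C$ is not free data but rather $-B^{\ast}$, which is exactly what makes the projection to $\Hom_E(W_2,W_1)$ an isomorphism rather than merely a surjection.
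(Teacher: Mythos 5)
Your proof is correct and follows essentially the same route as the paper's: decompose operators in block form with respect to $W=W_1\oplus W_2$, identify the $\pm 1$-eigenspaces of conjugation by $\sigma=\mathrm{diag}(I,-I)$ as the block-diagonal and block-antidiagonal operators, and use the skew-Hermitian condition to pin down the blocks. You are a bit more explicit than the paper about the sign in the off-diagonal constraint $C=-B^{\ast}$ (which matches the paper's later notation $\de(X)=\left(\begin{smallmatrix}&X\\-X^{\ast}&\end{smallmatrix}\right)$) and about verifying the group action, but there is no substantive difference in method.
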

\begin{proof}
If $\sigma: W\to W$ denotes the linear involution, the involution induced on $\fu(W)$ is $x\mapsto \sigma\circ x\circ\sigma$. It is a simple exercise in the definitions that any element $x\in \fu(W)$ may be uniquely expressed as 
\[x= \left(\begin{array}{cc}x_{11}&x_{12}\\x^\ast_{12}&x_{22}\end{array}\right),
\]
where $x_{ii}\in \fu(V_i)$, $x_{12}\in \Hom(W_2,W_1)$ and if $\la\cdot,\cdot\ra_i$ denotes the Hermitian pairing on $V_i$, then $x^\ast_{12}\in \Hom(W_1,W_2)$ is the unique linear map satisfying
\[
\la x_{12}v,w\ra_1 = \la v,x^\ast_{12}w\ra_2
\] for all $v\in W_2$ and $w\in W_1$. It follows that
\[
\sigma(x) = \left(\begin{array}{cc}x_{11}&-x_{12}\\-x_{12}^\ast&x_{22}\end{array}\right),
\]
and the lemma follows.
\end{proof}
In particular, any element $x\in \fu(W)_1$ may be uniquely written
\[
x=\de(X)= \left(\begin{array}{cc}&X\\-X^\ast&\end{array}\right),
\]
where $X\in \Hom_{E}(W_2,W_1)$. For any such $x$, we denote by $H_x\subset U(W_1)\times U(W_2)$ the stabilizer of $x$.

Define the regular semi-simple locus $\fu(W)_1^{rss}$ to be the set of $\de\in \fu(W)_1$ whose orbit under $U(W_1)\times U(W_2)$ is closed and of maximal dimension. In our present case, we have 
\[\fu(W)_1^{rss}=\fu(W)_1\cap \fu(W)^{rss},\] where $\fu(W)^{rss}$ is the classical regular semi-simple locus of the Lie algebra. This is due to the fact that the symmetric pair $(U(W),U(W_1)\times U(W_2)$ is geometrically quasi-split. See \cite[Sec. 2]{Lesliespringer} for more details on quasi-split symmetric spaces. 

 Let 
\begin{equation}\label{regular locus}
\fu(W)_1^{iso}\cong\mathrm{Iso}_E(W_2,W_1)
\end{equation} be the open subvariety of elements $\de(X)$ where $X:W_2\to W_1$ is a linear isomorphism. We refer to $\fu(W)_1^{iso}$ as the \emph{non-singular locus}.
There are natural contraction maps $r_i:\fu(W)_1\to \Herm(W_i)$ given by 
\[
r_i(\de(X))= \begin{cases} -XX^\ast :\quad i=1\\ -X^\ast X:\quad i=2.\end{cases}
\]
Define the map $\pi:\fu(W)_1\to \A^n$ given by $\pi(x) = (a_1(x),\ldots,a_n(x))$, where
\[
a_i(x)=\text{the coefficient of $t^{i-1}$ in}\quad \det(tI-r_1(x)).
\] 
\begin{Lem}\label{Lem: cat quotient Lie algebra 1}
The map $r:= r_1$ intertwines the $U(W_1)$ action on $\fu(W)_1$ and the adjoint action on $\Herm(W_1)$. Moreover, the pair $(\Herm(W_1), r)$ is a categorical quotient for the $U(W_2)$-action on $\fu(W)_1$.
\end{Lem}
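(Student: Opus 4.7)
The first assertion is a direct computation. For $g \in U(W_1)$ one has $g^* = g^{-1}$, so
\[
r(gX) = -(gX)(gX)^* = -gXX^*g^{-1} = \Ad(g)(r(X)),
\]
and for $h \in U(W_2)$ one has $(h^{-1})^* = h$, so
\[
r(Xh^{-1}) = -Xh^{-1}(h^{-1})^*X^* = -XX^* = r(X).
\]
This shows that $r$ is $U(W_1)$-equivariant for the adjoint action on $\Herm(W_1)$ and $U(W_2)$-invariant.

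For the categorical quotient claim, it suffices to prove that the induced map of $F$-algebras
\[
r^* : F[\Herm(W_1)] \longrightarrow F[\fu(W)_1]^{U(W_2)}
\]
is an isomorphism. Since $U(W_2)$ is reductive, formation of invariants commutes with flat base change, so I plan to verify the statement after extending scalars to $\Fbar$. The splitting $E \otimes_F \Fbar \cong \Fbar \times \Fbar$ gives decompositions $W_i \otimes_F \Fbar \cong V_i \oplus V_i^\vee$ for $d$-dimensional $\Fbar$-vector spaces $V_i$, with the base-changed Hermitian form identifying the second summand as the linear dual of the first. Under these identifications, an element of $\fu(W)_1 \otimes \Fbar$ is encoded by a pair $(P, Y) \in \Hom(V_2, V_1) \oplus \Hom(V_1, V_2)$; the group $U(W_2) \otimes \Fbar \cong \GL(V_2)$ acts by $h \cdot (P, Y) = (Ph^{-1}, hY)$; and a direct computation of the Hermitian adjoint shows that $r \otimes \Fbar$ becomes the composition map $(P, Y) \mapsto -PY \in \End(V_1) \cong \Herm(W_1) \otimes \Fbar$.

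With this setup, the problem reduces to the classical fact that the ring of $\GL(V_2)$-invariant polynomials on $\Hom(V_2, V_1) \oplus \Hom(V_1, V_2)$ is freely generated by the matrix entries of the composition $PY$. This is the First Fundamental Theorem of invariant theory for $\GL_d$ applied to the representation $(V_2^\vee)^{\oplus d} \oplus V_2^{\oplus d}$; since $\dim V_1 = \dim V_2 = d$, the determinantal relations of the Second Fundamental Theorem (which would be generated by $(d+1) \times (d+1)$ minors of the matrix of pairings) are vacuous, so the invariant ring is a polynomial algebra on $d^2$ generators, identified via $r^* \otimes \Fbar$ with the coordinate ring of $\End(V_1)$.

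The main bookkeeping obstacle is the base-change step itself, particularly tracking how the Hermitian adjoint $X \mapsto X^*$ interchanges the summands $V_i$ and $V_i^\vee$, so that the composition $XX^*$ really translates into the simple product $PY$ on the $V_1$-factor and so that the $\GL(V_2)$-action genuinely takes the form $(P,Y)\mapsto (Ph^{-1},hY)$. Once these identifications are in place, the invocation of classical matrix invariant theory is routine.
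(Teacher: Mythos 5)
Your proof is correct, but it follows a genuinely different route from the paper's. After the (routine) equivariance check and the reduction to the algebraic closure --- which both you and the paper perform, arriving at the $\GL_d\times\GL_d$-action $(g,h)\cdot(X,Y)=(gXh^{-1},hYg^{-1})$ on $\Mat_d\times\Mat_d$ with contraction $(X,Y)\mapsto XY$ --- the arguments diverge. The paper invokes Igusa's criterion (as formulated in the reference on Jacquet--Rallis transfer): a map to a normal variety that is constant on orbits, whose image has complement of codimension $\geq 2$, and whose generic fibers consist of a single orbit, is automatically a categorical quotient. The paper then checks surjectivity via the section $X\mapsto(X,I_d)$ and notes that over the open locus $\GL_d\subset\Mat_d$ the fibers are single orbits. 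You instead compute the invariant ring directly: the First Fundamental Theorem for $\GL(V_2)$ acting on $(V_2^\vee)^{\oplus d}\oplus V_2^{\oplus d}$ shows the invariants are generated by the entries of $PY$, and the Second Fundamental Theorem (whose relations are $(d+1)\times(d+1)$ minors, vacuous since the matrix is $d\times d$) shows there are no relations, so $r^*$ is an isomorphism onto $\Fbar[\End(V_1)]$. Both approaches are valid. The paper's is shorter and sidesteps the FFT/SFT; yours is more explicit and, as a byproduct, gives the stronger statement that the invariant ring is free of rank $d^2$ with no relations, rather than inferring the quotient structure indirectly. Two small remarks: your appeal to ``formation of invariants commutes with flat base change'' is in fact automatic for any linear algebraic group (not just reductive ones), since invariants are an equalizer and flat base change is exact; and injectivity of $r^*$ also follows immediately from dominance of $r$, so the SFT is not strictly needed, though it does no harm.
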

\begin{proof} The equivariance statement is obvious. 
As the categorical quotient assertion is geometric, we may assume without loss that $F=\Fbar$. The action we consider is following action of $\GL_d(F)\times \GL_d(F)$ on $Mat_d(F)\times Mat_d(F)$:
\[
(g,h)\cdot (X,Y) = (gXh^{-1},hYg^{-1}).
\]
The map $r$ becomes the product map
\begin{align*}
Mat_d(F)\times Mat_d(F)&\to Mat_d(F)\\
					(X,Y)&\mapsto  XY.
\end{align*}We make use of Igusa's criterion \cite[Section 3]{ZhangFourier}: let a reductive group $H$ act on an irreducible affine variety $X$. Let $Q$ be a normal irreducible variety, and let $\pi:X\to Q$ be a morphism that is constant on $H$ orbits such that
\begin{enumerate}
\item $Q-\pi(X)$ has codimension at least two,
\item there exists a nonempty open subset $Q'\subset Q$ such that the fiber $\pi^{-1}(q)$ of $q\in Q'$ contains exactly one orbit.
\end{enumerate}
Then $(Q,\pi)$ is a categorical quotient of $(H,X)$.  Note that it is clear that $r$ is surjective as $X\to (X,I_d)$ provides a section, so that the first criterion is satisfied. For the second criterion, we note that the open set $Q'=\GL_d(F)$ works.
\end{proof}

Note that a similar argument gives the following lemma for the quotient by both unitary actions.

\begin{Lem}\label{Lem: cat quotient Lie algebra}
The pair $(\A^d,\pi)$ is a categorical quotient for the $U(W_1)\times U(W_2)$ action on $\fu(W)_1$.
\end{Lem}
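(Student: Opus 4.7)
The plan is to derive this from Lemma \ref{Lem: cat quotient Lie algebra 1} by observing that $\pi$ factors through the $U(W_2)$-quotient as
\[
\pi: \fu(W)_1 \xrightarrow{\ r\ } \Herm(W_1) \xrightarrow{\ \bar\pi\ } \A^d,
\]
where $\bar\pi(Y)$ records the coefficients of $\det(tI-Y)$; indeed $\bar\pi\circ r = \pi$ by the very definition of the $a_i$. Since the $U(W_1)$- and $U(W_2)$-actions on $\fu(W)_1$ commute, and since Lemma \ref{Lem: cat quotient Lie algebra 1} shows $r$ is both a $U(W_2)$-categorical quotient and $U(W_1)$-equivariant, the standard fact that a composition of categorical quotients under commuting reductive actions is again a categorical quotient for the combined action reduces the claim to verifying that $\bar\pi$ is the categorical quotient for the twisted adjoint $U(W_1)$-action on $\Herm(W_1)$.

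For this, I would pass to $\Fbar$: under base change, $\Herm(W_1)_{\Fbar}\cong \fgl_d$ and $U(W_1)_{\Fbar}\cong \GL_d$ acting by conjugation, so $\bar\pi$ becomes the classical Chevalley map. The Chevalley restriction theorem then yields the claim. Alternatively, I would invoke Igusa's criterion in exactly the style of the previous proof: $\bar\pi$ is surjective via companion matrices, and over the open subset $Q'\subset \A^d$ of polynomials with pairwise distinct roots, each fiber is a single $\GL_d$-conjugacy class of regular semisimple elements.

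The step requiring the most care will be the composition-of-quotients formalism, which ultimately comes down to unwinding the universal property together with commutativity of the two actions. If this feels awkward to write out, one can instead bypass it by applying Igusa's criterion directly to the full $U(W_1)\times U(W_2)$-action on $\fu(W)_1$: checking surjectivity of $\pi$ (again via companion matrices) and, on the preimage of the locus of polynomials with pairwise distinct \emph{nonzero} roots, identifying each fiber with a single orbit. The latter reduces to essentially the same matrix identity used in Lemma \ref{Lem: cat quotient Lie algebra 1}, since every such $x=\de(X)$ necessarily has $X$ invertible, so the orbit analysis proceeds by first conjugating $r(x)=-XX^*$ to a fixed regular semisimple representative using $U(W_1)$ and then applying Lemma \ref{Lem: cat quotient Lie algebra 1} to match the fibers of $r$ within that $U(W_1)$-orbit.
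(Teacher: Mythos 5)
Your proposal is correct, and in fact it offers two routes, one of which essentially coincides with the paper's. The paper's proof applies Igusa's criterion directly to the full $U(W_1)\times U(W_2)$-action over $\Fbar$: surjectivity of $\pi$ is immediate, and uniqueness of the orbit in the fiber over a generic point of $\A^d$ is deduced from Proposition~\ref{Lem: cat quotient Lie algebra 1} together with the theory of rational canonical forms --- this is your ``alternative'' route almost verbatim. Your primary route, factoring $\pi = \bar\pi\circ r$ and invoking the composition law for affine GIT quotients under commuting reductive actions ($\calo(X)^{G_1\times G_2} = (\calo(X)^{G_2})^{G_1}$, so $X//(G_1\times G_2)\cong (X//G_2)//G_1$), is a genuinely different and slightly more conceptual argument: it offloads the work onto Lemma~\ref{Lem: cat quotient Lie algebra 1} and the Chevalley restriction theorem for $\Herm(W_1)//U(W_1)\cong\A^d$, and avoids a second direct invocation of Igusa's criterion. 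Either approach is sound; the paper's is more hands-on and uniform with the previous proof, while yours makes the structural relationship between the two lemmas more explicit. One small caveat worth noting if you go the factorization route: the compositional statement should be phrased at the level of coordinate rings (as above) rather than via a general ``composition of categorical quotients'' slogan, since it relies on the quotients being affine GIT quotients by reductive groups rather than merely categorical quotients in an arbitrary category.
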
\begin{proof}
As in the proof of the previous proposition, we may pass to the algebraic closure, at which point it is evident that the map $\pi$ is surjective. The uniqueness of orbits over a non-empty subset follows from the associated statement in Proposition \ref{Lem: cat quotient Lie algebra 1} and the theory of rational canonical forms.
\end{proof}

\begin{Lem}\label{Lem: regular semi-simple locus} There is an inclusion $\fu(W)_1^{rss}\subset\fu(W)_1^{iso}$.
\end{Lem}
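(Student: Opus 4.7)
The plan is to argue by contrapositive: if $X \in \Hom_E(W_2, W_1)$ fails to be a linear isomorphism, I will show that $\de(X) = \begin{pmatrix} 0 & X \\ -X^* & 0 \end{pmatrix}$ fails to be regular semisimple in the full Lie algebra $\fu(W)$. Combined with the identification $\fu(W)_1^{rss} = \fu(W)_1 \cap \fu(W)^{rss}$ noted just above the lemma, this gives the desired containment $\fu(W)_1^{rss} \subset \fu(W)_1^{iso}$.

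Since $\dim W_1 = \dim W_2 = d$ is finite, non-invertibility of $X$ is equivalent to $\ker X \neq 0$, and also to $\ker X^* \neq 0$ (both have common rank deficiency $d - \rk X \geq 1$). The block form of $\de(X)$ makes it transparent that
\[
\ker X^* \oplus \ker X \;\subset\; \ker \de(X),
\]
so $0$ is an eigenvalue of $\de(X)$ of geometric multiplicity at least $2$. Equivalently, a Schur-complement computation gives the characteristic polynomial
\[
\det(tI_{2d} - \de(X)) = \det(t^2 I_d + X^*X),
\]
exhibiting the spectrum of $\de(X)$ as the set of signed square roots of the eigenvalues of $-r_2(\de(X)) = X^*X$; a zero eigenvalue of $X^*X$ of multiplicity $k \geq 1$ therefore contributes an eigenvalue $0$ of $\de(X)$ of multiplicity $2k \geq 2$. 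Since regular semisimple elements of $\fu(W) \subset \fgl(W)$ are precisely those with distinct eigenvalues (type $A$), the repeated eigenvalue $0$ forces $\de(X) \notin \fu(W)^{rss}$, completing the proof. No serious obstacle is anticipated; the only care needed is the standard identification of regular semisimple elements in a Lie algebra of type $A$ with those having distinct eigenvalues.
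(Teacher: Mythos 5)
Your argument is correct and essentially matches the paper's: both compute the characteristic polynomial of the anti-diagonal block element, observe it has the form $p(t^2)$ with $p$ the characteristic polynomial of $XX^*$ (equivalently $X^*X$), and conclude that non-invertibility of $X$ forces $0$ to be a repeated root, ruling out regular semisimplicity via the identification $\fu(W)_1^{rss}=\fu(W)_1\cap\fu(W)^{rss}$. The paper phrases this after base change to $\Fbar$ in terms of the $\GL_d\times\GL_d$ action on $\Mat_d\times\Mat_d$, while you work directly with $\de(X)$ and add the kernel observation $\ker X^*\oplus\ker X\subset\ker\de(X)$, but the mechanism is the same.
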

\begin{proof}We again pass to the algebraic closure $F=\Fbar$ and assume that $\fu(W)\cong \fgl_{2d}(F)$. Just as before, we now consider the action of $\GL_d(F)\times \GL_d(F)$ on $Mat_d(F)\times Mat_d(F)$ by $(g,h)\cdot (X,Y) = (gXh^{-1},hYg^{-1}).$ As before, we are now considering the action
\[
(g,h)\cdot (X,Y) = (gXh^{-1},hYg^{-1}).
\]
of $\GL_d(F)\times \GL_d(F)$ on $Mat_d(F)\times Mat_d(F)$. The invariant of this action is $\pi(X,Y)(t)=\det(tI-XY)$ as in Lemma \ref{Lem: cat quotient Lie algebra}. 

Recalling that the infinitesimal symmetric space $Mat_d(F)\times Mat_d(F)$ is quasi-split, the element $(X,Y)$ is regular semi-simple in $Mat_d(F)\times Mat_d(F)=\fgl_{2d}(F)_1$ if and only if the element
\[
Z=\left(\begin{array}{cc}&X\\Y&\end{array}\right)\in \fgl_{2d}(F)
\] is regular semi-simple. Letting $\chi_Z(t)=\det(tI-Z)$ denote the characteristic polynomial, $Z$ is regular semi-simple if and only if $\chi_Z$ has distinct roots. Now a simple exercise in linear algebra shows that
\[
\chi_Z(t)=\pi(X,Y)(t^2).
\]
Thus, $\ga\in \fgl_{2d}(F)^{rss}$ is possible only if $0$ is not a root of $\pi(X,Y)$, implying the lemma.
\end{proof}
 \subsection{Relative orbital integrals} We now introduce the primary objects of interest: the relative orbital integrals for the symmetric pair $(U(W_1)\times U(W_2),\fu(W)_1)$. For any $x\in \fu(W)_1$, we set
\[
H_x:=\{(h,g)\in U(W_1)\times U(W_2): h^{-1}xg=x\}.
\]
\begin{Def}
For $f\in C_c^\infty(\fu(W)_1)$, and $x\in \fu(W)_1$ a relatively semi-simple element, we define the \textbf{relative orbital integral} of $f$ by
\begin{equation}\label{eqn: orbital int def}
\RO(x,f) := \displaystyle\iint_{H_x\backslash U(W_1)\times U(W_2)}f(h_1^{-1}x h_2) {d\dot{h}_1d\dot{h}_2},
\end{equation}
where $dh_i$ are Haar measures on $U(V_i)$ and $d\dot{h}_1d\dot{h}_2$ is the invariant measure such that $dt d\dot{h}_1d\dot{h}_2 = d{h}_1d{h}_2$. Here $dt$ the unique normalized measure on the torus $H_x$ as in Section \ref{Section: unitary endoscopy}. As always, the value of $\RO(f,x)$ depends on the choice of the measures $dh_i$.
\end{Def}
We note that since the orbit of $x$ is closed, the integral is absolutely convergent. 
Let 
\[
\Herm(W_1)^{iso}:=\Herm(W_1)\cap \GL(W_1)
\]be the open subset of non-singular Hermitian forms.
\begin{Lem}\label{Lem: centralizer contraction}
The restriction $r:\fu(W)_1^{iso}\to \Herm(W_1)^{iso}$ is a $U(W_2)$-torsor. Moreover, for $x\in \fu(W)^{iso}_1$, we have an isomorphism
\[
H_x\iso T_{r(x)}
\]
given by $(h_1,h_2)\mapsto h_1$.
\end{Lem}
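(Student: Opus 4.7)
The plan is to work in the coordinates provided by Lemma \ref{make life easy}: write $x=\de(X)$ with $X\in\Hom_E(W_2,W_1)$, so $x\in\fu(W)_1^{iso}$ exactly when $X$ is an isomorphism. Under these coordinates, $r(x)=-XX^{\ast}$, the action of $(h_1,h_2)\in U(W_1)\times U(W_2)$ sends $X$ to $h_1 X h_2^{-1}$, and the restricted $U(W_2)$-action sends $X$ to $Xh_2^{-1}$. The equivariance of $r$ is then visible, so everything reduces to manipulations of the Hermitian adjoint $(-)^{\ast}$ subject to the relation $h^{\ast}=h^{-1}$ for unitary elements.

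For the torsor claim I would first note that $U(W_2)$ acts freely on $\fu(W)_1^{iso}$: $Xh_2^{-1}=X$ with $X$ invertible forces $h_2=1$. I would then show the fibers of $r$ are single $U(W_2)$-orbits. Given $X,X_0\in\fu(W)_1^{iso}$ with $-XX^{\ast}=-X_0X_0^{\ast}$, set $u=X_0^{-1}X\in\GL(W_2)$. A direct computation gives
\[
uu^{\ast}=X_0^{-1}XX^{\ast}(X_0^{\ast})^{-1}=X_0^{-1}X_0X_0^{\ast}(X_0^{\ast})^{-1}=I,
\]
so $u\in U(W_2)$ and $X=X_0u$, exhibiting $X$ and $X_0$ in a common orbit. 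The surjectivity of the restricted $r$ onto $\Herm(W_1)^{iso}$ follows by inspection (for $h\in\Herm(W_1)^{iso}$ one can take any $X$ with $-XX^{\ast}=h$, which exists over $F$ when the fiber is nonempty; alternatively one observes the map admits sections locally), giving the torsor structure.

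For the identification $H_x\iso T_{r(x)}$, I would proceed by explicit inverse construction. The map $(h_1,h_2)\mapsto h_1$ lands in $T_{r(x)}$ by the equivariance in Lemma \ref{Lem: cat quotient Lie algebra 1}: $H_x$ fixes $x$, hence $h_1$ fixes $r(x)$. Injectivity is immediate, since $h_1 X=Xh_2$ with $X$ invertible determines $h_2=X^{-1}h_1X$. For surjectivity, given $h_1\in T_{r(x)}$ define $h_2:=X^{-1}h_1X\in\GL(W_2)$. Using $h_1^{\ast}=h_1^{-1}$ and the centralizer relation $h_1XX^{\ast}=XX^{\ast}h_1$ (which is exactly the statement that $h_1\in T_{r(x)}$), I compute
\[
h_2h_2^{\ast}=X^{-1}h_1XX^{\ast}h_1^{-1}(X^{\ast})^{-1}=X^{-1}XX^{\ast}h_1h_1^{-1}(X^{\ast})^{-1}=I,
\]
so $h_2\in U(W_2)$, and the equality $h_1Xh_2^{-1}=X$ is tautological.

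The proof is essentially a bookkeeping exercise; the only ``content'' is the observation that the defining relation of $T_{r(x)}$ is exactly what is needed to ensure that the candidate element $h_2=X^{-1}h_1X$ lands in $U(W_2)$, which is the mild obstacle worth highlighting.
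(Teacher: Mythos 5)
Your proof is correct and follows essentially the same path as the paper: pass to the coordinates $x=\de(X)$, verify that the fibers of $r$ are single free $U(W_2)$-orbits, and then identify $H_x$ with $T_{r(x)}$. For the second claim the paper simply invokes the torsor property from part one (if $h\in T_{r(x)}$ then $hx$ lies in the same fiber as $x$, so $hx = xh'$ for a unique $h'\in U(W_2)$), whereas you write out the candidate $h_2 = X^{-1}h_1X$ and check $h_2h_2^{\ast}=I$ by hand using $h_1\in T_{r(x)}$; these are the same argument expressed at different levels of explicitness, and your unwinding is a nice way to make visible exactly where the centralizer condition is used.

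One small caution on the torsor step. The parenthetical ``for $h\in\Herm(W_1)^{iso}$ one can take any $X$ with $-XX^{\ast}=h$, which exists over $F$ when the fiber is nonempty'' is circular, and in fact the map $r$ is generally \emph{not} surjective on $F$-points — that failure is precisely what produces the decomposition over $H^1(F,U(W_2))$ used immediately after the lemma. The torsor assertion is a statement about the algebraic morphism, and the right justification is the one you gesture at in your alternative (``admits sections locally''): the simply-transitive fiberwise action you verified, together with surjectivity and the existence of a section over $\overline{F}$ (as in the proof of Proposition \ref{Lem: cat quotient Lie algebra 1}), yields the torsor structure because this is an fppf-local condition. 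This is exactly what the paper means by ``the claim holds over the algebraic closure of $F$, which suffices.'' With that clarification your write-up is complete.
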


\begin{proof}For the first claim, we saw in the proof of Proposition \ref{Lem: cat quotient Lie algebra 1} that the claim holds over the algebraic closure of $F$, which suffices. For the second claim, we construct an inverse. Let $h\in T_{r(x)}$. Then $hx$ also lies in the fiber over $r(x)$. By the torsor property, there exists a unique $h'\in U(W_2)$ such that $$hx=xh'.$$ Define the inverse morphism by $h\mapsto (h,h')$. It is clear that this gives a section.
\end{proof}
\begin{Notation}
We will always use lower-case Roman letters $x,y$ to denote vectors in the infinitesimal symmetric space $\fu(W)_1$ and the like, and will use lower-case Greek letters $\de,\ga$ to denote vectors in the Hermitian quotient $\Herm(W_1)$, etc.
\end{Notation}

\subsection{Application of the contraction map}
By Lemma \ref{Lem: centralizer contraction}, the contraction map to the non-singular locus of $\fu(W)_1^{iso}$ is a $U(W_2)$-torsor. Noting that 
\[
\fu(W)_1^{iso}\subset \fu(W)_1
\]
is $\GL(\fu(W)_1)$-stable, Corollary \ref{Cor: torsor rational} implies the decomposition
\begin{equation*}
\Herm(W_1)^{iso} = \bigsqcup_{[\al]\in H^1(F,U(W_2))} \fu(W)_1^{iso}/U(V_\al),
\end{equation*}
where the subscript $\al$ indicates the appropriate pure inner twist.
Proposition \ref{Prop: surjective schwartz} thus tells us that 
\[
\left(r/U(W_2)\right)_!:\bigsqcup_{[\al]\in H^1(F,U(W_2))} C_c^\infty(\fu(W)_1^{iso})\to C_c^\infty(\Herm(W_1)^{iso})
\]
is surjective. We may extend this to a non-surjective map \[\left(r/U(W_2)\right)_!:\bigsqcup_{[\al]\in H^1(F,U(W_2))} C_c^\infty(\fu(W)_1)\to C^\infty(\Herm(W_1)^{iso}),\]  where the push-forward $r_!(f)$ will not be compactly supported if $\supp(f)$ is not contained in $\fu(W)_1^{iso}$. It is always of relatively-compactly support in $\Herm(W_1)$ \cite[Lemma 3.12]{ZhangFourier}.  

\begin{Lem}\label{Lem: orbital reduction}
Fix Haar measures on $U(W_1)$, $U(W_2)$, and $H_x\cong T_{r(x)}$. For $f\in C_c^\infty(\fu(W)_1)$ and for $x\in \fu(W)_1$ regular semi-simple, we have $$\RO(x,f) = \Orb(r(x),r_!(f)),$$
where both sides are normalized with our choices of Haar measures.
\end{Lem}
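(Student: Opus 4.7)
The plan is to unfold the definition of the relative orbital integral, apply Fubini using the torsor structure of $r$, and recognize the resulting inner integral as the fiber integral that defines the push-forward $r_!(f)$.

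To begin, I would invoke the hypothesis that $x$ is regular semi-simple together with Lemma \ref{Lem: regular semi-simple locus} to conclude that $x \in \fu(W)_1^{iso}$, whence $r(x) \in \Herm(W_1)^{iso}$ and Lemma \ref{Lem: centralizer contraction} applies. In particular, the projection $(h_1, h_2) \mapsto h_1$ gives an isomorphism $H_x \iso T_{r(x)}$. This isomorphism of tori carries the unique maximal compact subgroup of $H_x$ to that of $T_{r(x)}$; hence the normalized Haar measures $dt$ used in $\RO(x, f)$ and in $\Orb(r(x), r_!(f))$ agree under this identification.

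Next, applying Fubini to split the integration between $U(W_1)$ and $U(W_2)$ and using the above identification to perform the quotient only on the $U(W_1)$-factor, I would write
\[
\RO(x, f) = \int_{T_{r(x)} \backslash U(W_1)} \left( \int_{U(W_2)} f(h_1^{-1} x h_2) \, dh_2 \right) d\dot{h}_1.
\]
The equivariance of $r$ from Proposition \ref{Lem: cat quotient Lie algebra 1} gives $r(h_1^{-1} x h_2) = \Ad(h_1^{-1})(r(x))$, which is independent of $h_2$. Combined with the $U(W_2)$-torsor property of $r$ on $\fu(W)_1^{iso}$, this shows that for each fixed $h_1$ the map $h_2 \mapsto h_1^{-1} x h_2$ is a bijection of $U(W_2)$ onto the fiber $r^{-1}(\Ad(h_1^{-1})(r(x)))$. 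Since the fiber lies in the component of $\Herm(W_1)^{iso}$ corresponding to the standard form $\al_0$, the very definition of the push-forward gives
\[
\int_{U(W_2)} f(h_1^{-1} x h_2) \, dh_2 = r_!(f)(\Ad(h_1^{-1})(r(x))).
\]
Plugging this back in, the outer integral becomes exactly $\Orb(r(x), r_!(f))$.

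The main obstacle is simply the bookkeeping of Haar measures: I must verify that the inner fiber integral over $U(W_2)$ coincides with the normalization of $r_!$ fixed before the lemma, and that under the isomorphism $H_x \iso T_{r(x)}$ the measure $dt$ on $H_x$ pulls back to the normalized measure on $T_{r(x)}$. Both reduce to unwinding the convention that the relevant maximal compact subgroups have volume $1$, together with the compatibility $dt \, d\dot{h}_1 d\dot{h}_2 = dh_1 dh_2$ built into the definition of $\RO(x,f)$.
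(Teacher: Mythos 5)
Your argument is correct and is the same one the paper gives, just with the Fubini/torsor bookkeeping spelled out: the paper's proof is the one-line "rearrange the integrals and apply Lemmas \ref{Lem: regular semi-simple locus} and \ref{Lem: centralizer contraction}," yielding precisely your displayed identity. The one point the paper makes explicit that you elide is the justification for Fubini when $\supp(f)$ is not contained in $\fu(W)_1^{iso}$ — then $r_!(f)$ need not be compactly supported, and one invokes that the orbit of $r(x)$ is closed while $\supp(r_!(f))$ is relatively compact in $\Herm(W_1)$ to get absolute convergence before interchanging the integrals.
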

\begin{proof}
If $x$ is a regular semi-simple element, then everything is absolutely convergent as the orbit of $r(x)$ is closed and the the support of $\supp(r_!(f))$ is relatively compact in $\Herm(W_1)$. Rearranging the integrals and applying Lemmas \ref{Lem: regular semi-simple locus} and \ref{Lem: centralizer contraction} implies that
\begin{equation*}\label{eqn: reduction 1}
\RO(x,f) = \displaystyle\int_{T_{r(x)}\backslash U(W_1)}r_!(f)(g^{-1}r(x)g)d\dot{g}.
\end{equation*}
\end{proof}

The following lemma is needed to study relative $\ka$-orbital integrals. Let $x\in \fu(W)_1^{rss}$ and set $\de=r(x)\in \Herm(W_1)^{rss}$.

\begin{Lem}\label{Lem: cohom iso cent}
Let $\phi:H_x\iso T_\de$ be the map from Lemma \ref{Lem: centralizer contraction}. Then $\phi$ induces an isomorphism between
\begin{equation*}\label{eqn: iso kernels}
\calc(H_x/F)\iso \calc(T_\de/F),
\end{equation*}
where $$\calc(H_x/F)=\ker\left(H^1(F,H_x)\to H^1_{ab}(F, U(W_1)\times U(W_2))\right)$$ and $$\calc(T_\de/F)=\ker\left(H^1(F,T_\de)\to H^1_{ab}(F,U(W_1))\right).$$ 
\end{Lem}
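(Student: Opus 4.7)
The plan is to exploit the symmetric role of $H_x$ in $U(W_1)\times U(W_2)$: both projections give isomorphisms onto tori, and both agree after passing to the abelianization via the determinant, so the two abelianization maps out of $H^1(F,H_x)$ have the same kernel.

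First, the same argument that gives Lemma \ref{Lem: centralizer contraction} applies with the roles of $W_1$ and $W_2$ swapped and the contraction $r_2(x)=-X^*X\in\Herm(W_2)$ in place of $r(x)$. Hence the second projection $\phi':H_x\to U(W_2)$, $(h_1,h_2)\mapsto h_2$, is also an isomorphism, of $H_x$ onto $T_{r_2(x)}$. Writing $x=\de(X)$ with $X:W_2\to W_1$ an $E$-linear isomorphism (permitted since $\fu(W)_1^{rss}\subseteq\fu(W)_1^{iso}$ by Lemma \ref{Lem: regular semi-simple locus}), the defining relation $h_1^{-1}xh_2=x$ reads $h_1X=Xh_2$, i.e.\ $h_1=Xh_2X^{-1}$ under the identification $X:W_2\iso W_1$ inside $\GL_E$.

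Next, the abelianization of each unitary group $U(W_i)$ is $U(1)$ via the determinant, yielding canonical identifications $H^1_{ab}(F,U(W_i))\cong H^1(F,U(1))$. The composite
\[
H_x\hra U(W_1)\times U(W_2)\xrightarrow{\det\times\det} U(1)\times U(1)
\]
sends $(h_1,h_2)$ to $(\det h_1,\det h_2)=(\det(Xh_2X^{-1}),\det h_2)=(\det h_2,\det h_2)$, because determinants are invariant under conjugation. Thus the image lies in the diagonal, and passing to $H^1$ shows that the two components of
\[
H^1(F,H_x)\lra H^1_{ab}(F,U(W_1))\times H^1_{ab}(F,U(W_2))
\]
coincide under the canonical identification of both factors with $H^1(F,U(1))$. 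Consequently an element $\al\in H^1(F,H_x)$ lies in the kernel of the product map if and only if it lies in the kernel of the first component; applying $\phi_*:H^1(F,H_x)\iso H^1(F,T_\de)$ yields $\phi_*(\calc(H_x/F))=\calc(T_\de/F)$.

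The one mildly delicate point is the factorization of the abelianization map $H^1(F,U(W_i))\to H^1_{ab}(F,U(W_i))$ through the determinant $U(W_i)\to U(1)$, and the compatibility of these identifications for the two (possibly non-isomorphic) pure inner forms $W_1$ and $W_2$. This is standard since $U(W_i)^{ab}\cong U(1)$ canonically via $\det$, and $H^1_{ab}(F,U(W_i))\cong F^\times/\Nm_{E/F}(E^\times)$ is insensitive to the inner form; the rest of the argument is a direct consequence of the conjugation identity $h_1=Xh_2X^{-1}$.
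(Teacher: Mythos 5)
Your argument is correct, and it takes a genuinely different (and arguably cleaner) route than the paper. The paper sets up a commutative diagram to get an injection $\calc(H_x/F)\hookrightarrow\calc(T_\de/F)$, then invokes the explicit cardinality $|\calc(T_\de/F)|=2^{|S_1|-1}$ from Lemma~\ref{Lem: centralizers} and proves the matching lower bound $|\calc(H_x/F)|\geq 2^{|S_1|-1}$ by observing that the image of $\iota_x$ lies in $\ker\left(H^1_{ab}(F,U(W_1)\times U(W_2))\to H^1_{ab}(F,U(W))\right)$; equality is then forced by the squeeze. You instead bypass all counting by noting that, since $SU(W_i)$ is simply connected, the abelianization map $H^1(F,U(W_i))\to H^1_{ab}(F,U(W_i))$ is induced by $\det\colon U(W_i)\to U(1)$, and the stabilizer relation $h_1=Xh_2X^{-1}$ forces $\det h_1=\det h_2$; hence $\iota_x$ factors through the diagonal of $H^1(F,U(1))\times H^1(F,U(1))$, and its kernel is literally the kernel of either projection, which is precisely $\calc(T_\de/F)$ after transport along $\phi_*$. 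Your proof trades the cohomological bookkeeping for the single structural fact that the determinant is conjugation-invariant; this is more direct, does not require knowing the cardinality of $\calc(T_\de/F)$ in advance, and works uniformly in the archimedean and non-archimedean cases. (Your observation that $\iota_x$ lands in the diagonal is, after identifying $\zz/2\zz$ with itself, equivalent to the paper's claim about factoring through $\ker\left(H^1_{ab}(F,U(W_1)\times U(W_2))\to H^1_{ab}(F,U(W))\right)$, but you derive it from the determinant identity rather than from the inclusion $H_x\subset U(W)$, and you use it to conclude directly rather than merely to bound a cardinality.)
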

\begin{proof}
Consider the commutative diagram
\[
\begin{tikzcd}
H^1(F,H_x)\ar[r,"\iota_x"]\ar[d,"\phi^\ast"]& H^1_{ab}(F,U(W_1))\times H^1_{ab}(F,U(W_2))\ar[d,"p_1^\ast"]\\
H^1(F,T_\de)\ar[r,"\iota_\de"]&H^1_{ab}(F,U(W_1)).
\end{tikzcd}
\]
where $\phi^\ast$ and $p_1^\ast$ are the maps induce on cohomology.

If $\al\in\calc(H_x/F)$, then $\iota_\de\phi(\al)=p_1(\iota_x(\al))=1.$ This allows us to extend the diagram to
\[
\begin{tikzcd}
1\ar[r]&\calc(H_x/F)\ar[r]\ar[d]&H^1(F,H_x)\ar[r,"\iota_x"]\ar[d,"\phi^\ast"]& H^1_{ab}(F,U(W_1))\times H^1_{ab}(F,U(W_2))\ar[d,"p_1^\ast"]\\
1\ar[r]&\calc(T_\de/F)\ar[r]&H^1(F,T_\de)\ar[r,"\iota_\de"]&H^1_{ab}(F,U(W_1)),
\end{tikzcd}
\]
where the arrow $\calc(H_x/F)\to \calc(T_\de/F)$ is an injection. By Lemma \ref{Lem: centralizers}, we need only show that $|\calc(H_x/F)|=2^{|S_1|-1}$. But this follows since the image of $\iota_x$ lies in
\[
\ker\left(H^1_{ab}(F,U(W_1)\times U(W_2))\to H^1_{ab}(F,U(W))\right).
\]
Combined with the injection $\calc(H_x/F)\to \calc(T_\de/F)$, this forces
\[
2^{|S_1|-1}\leq |\calc(H_x/F)\leq 2^{|S_1|-1},
\]
and the lemma follows.
\end{proof}

\section{Relative endoscopy}\label{Section: relative endoscopy}
 In this section, we fill in the details of our notion of endoscopic symmetric spaces and study the conjectural transfer and the fundamental lemma in this context. We establish the existence of smooth transfer for many functions. 

\subsection{Relative endoscopic data}
Let $\xi=(U(V_a)\times U(V_b),s,\eta)$ be an elliptic endoscopic datum for $U(W_1)$, where $d=a+b$. Fix representatives $\{\al\}$ and $\{\be\}$ of the isomorphism classes of Hermitian forms on the underlying vector spaces of $V_a$ and $V_b$. For each pair $(\al,\be)$, we have the Lie algebras
\[
\fu(V_a\oplus V_\al)\text{   and    }\fu(V_b\oplus V_\be),
\] where $V_\al$ simply denotes the Hermitian space $(V_a,\al)$, and similarly for $V_\be$. Each equipped with a natural involution $\sig_\al$ and $\sig_\be$ inducing infinitesimal symmetric pairs
\[
\left(U(V_a)\times U(V_\al),\fu(V_a\oplus V_\al)_1\right) \text{ and }\left(U(V_b)\times U(V_\be),\fu(V_b\oplus V_\be)_1\right).
\]
  As in the introduction, we call the quintuple
\[
(\xi,\al,\be)=(U(V_a)\times U(V_b), s, \eta,\al,\be)
\] is a {relative elliptic endoscopic datum} and the direct sum of these infinitesimal symmetric pairs is an {endoscopic infinitesimal symmetric pair} for $\left(U(W_1)\times U(W_2),\fu(W)_1\right)$.

\begin{Notation}
For the next two subsections, we adopt the following notation:  fix a relative endoscopic datum $(\xi,\al,\be)$ and set
\[
\text{$\fg=\fu(W)$\: and\: $\fh^{\al,\be}=\fu(V_a\oplus V_\al)\oplus\fu(V_b\oplus V_\be)$.}
\]
\end{Notation} 
 
 The endoscopic space $\fh_1^{\al,\be}$ comes equipped with the $U(V_\al)\times U(V_\be)$-invariant contraction map
\[
r_{\al,\be}:\fh^{\al,\be}_1\to \Herm(V_a)\oplus \Herm(V_b).
\]

\begin{Def}
We say that $x\in \fg_1^{rss}$ \textbf{matches} the pair $(x_a,x_b)\in (\fh_1^{\al,\be})^{rss}$ if 
\[
\text{$r(x)\in \Herm(W_1)$ and $r_{\al,\be}(x_a,x_b)\in \Herm(V_a)\oplus\Herm(V_b)$}
\]match in the sense of Definition \ref{Def: endoscopic matching}.
\end{Def}
 For matching elements $(x_a,x_b)$ and $x$, we define the \textbf{relative transfer factor}
\begin{equation}\label{eqn: transfer factor}
\De_{rel}((x_a,x_b),x):=\De(r_{\al,\be}(x_a,x_b), r(x)),
\end{equation}
where the right-hand side is the definition given in Section \ref{Section: transfer factor}.
\begin{Lem}
As $(x_a,x_b)$ varies over a stable $(U(V_a)\times U(V_\al))\times(U(V_b)\times U(V_\be))$-orbit in $\fh^{\al,\be}_1$, the element $x$ varies over a stable $U(W_1)\times U(W_2)$-orbits in $\fg_1$.
\end{Lem}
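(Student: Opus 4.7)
The plan is to reduce the statement, via the contraction maps $r:\fg_1\to\Herm(W_1)$ and $r_{\al,\be}:\fh^{\al,\be}_1\to\Herm(V_a)\oplus\Herm(V_b)$, to the corresponding matching of stable orbits in the Hermitian Lie algebra endoscopy recalled in Section \ref{Section: unitary endoscopy}. By definition (\ref{eqn: transfer factor}), $(x_a,x_b)$ matches $x$ precisely when $r_{\al,\be}(x_a,x_b)$ and $r(x)$ are Jacquet--Langlands transfers in the sense of Definition \ref{Def: endoscopic matching}. For regular semi-simple elements, this matching is determined by the common characteristic polynomial and hence descends to a well-defined correspondence on stable orbits in $\Herm(V_a)\oplus\Herm(V_b)$ and $\Herm(W_1)$; moreover, standard endoscopy implies that as $r_{\al,\be}(x_a,x_b)$ varies over a stable orbit, the matching elements form a single stable orbit in $\Herm(W_1)^{rss}$.

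The first main step is then to show that $r$ and $r_{\al,\be}$ induce bijections between stable orbits on their regular semi-simple loci. Lemma \ref{Lem: regular semi-simple locus} places $\fg_1^{rss}$ inside $\fg_1^{iso}$, on which Lemma \ref{Lem: centralizer contraction} identifies $r$ with a $U(W_2)$-torsor that is also $U(W_1)$-equivariant. Surjectivity on stable orbits is then immediate from the $U(W_1)$-equivariance. For injectivity, I would argue as follows: if $r(x)$ and $r(x')$ are $U(W_1)(\Fbar)$-conjugate, then after adjusting $x'$ by a suitable element of $U(W_1)(\Fbar)$ we may assume $r(x)=r(x')$ over $\Fbar$, at which point the torsor property produces $h\in U(W_2)(\Fbar)$ with $x'=h\cdot x$, so $x$ and $x'$ are stably conjugate under $U(W_1)\times U(W_2)$. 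The same analysis applies componentwise to $r_{\al,\be}$, since $\fh^{\al,\be}_1$ is a product of two symmetric pairs of the same type.

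The final step assembles these into the lemma: as $(x_a,x_b)$ ranges over its stable orbit in $\fh^{\al,\be}_1$, the image $r_{\al,\be}(x_a,x_b)$ ranges over a single stable orbit in $(\Herm(V_a)\oplus\Herm(V_b))^{rss}$; Hermitian endoscopy matches this to a single stable orbit in $\Herm(W_1)^{rss}$; and the bijection from Step~1 identifies the preimage under $r$ with a single stable orbit in $\fg_1^{rss}$. The only real subtlety I anticipate is the torsor-based injectivity argument for stable orbits under $r$, where one must carefully transition from $\Fbar$-rational conjugators for $r(x), r(x')$ to $\Fbar$-rational conjugators for $x,x'$; note also that a stable orbit in $\Herm(W_1)$ need not be the image of $r$ on $F$-points for a fixed choice of $W_2$, but this phenomenon does not affect the statement.
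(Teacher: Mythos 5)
Your argument is correct and follows the same approach as the paper: the paper's proof also reduces to the known matching of stable orbits for the Hermitian Lie algebras $\Herm(W_1)$ and $\Herm(V_a)\oplus\Herm(V_b)$, using the triviality of the regular $U(W_2)$-stabilizers in the fibers of $r:\fg_1\to\Herm(W_1)$ (i.e.\ the torsor property of Lemma \ref{Lem: centralizer contraction}) to lift the correspondence along $r$ and $r_{\al,\be}$. You have simply spelled out the torsor-based injectivity step and the caveat about $r$ failing to be surjective on $F$-points, both of which the paper leaves implicit.
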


\begin{proof}
This follows from the corresponding case for unitary Lie algebras and the fact that the regular stabilizers in the quotients $r:\fg_1\to \Herm(W_1)$ are trivial.
\end{proof}
\subsection{Smooth transfer}

 Fix $x\in \fg_1^{rss}$ and let $(\xi,\al,\be)$ be a relative endoscopic datum. The endoscopic triple $\xi=(H,s,\eta)$ of $U(W_1)$ determines a character 
 \[
 \kappa:\calc(T_{r(x)}/F)\to \cc^\times
 \]
 via the construction of Lemma \ref{Lem: endo character}. By Lemma \ref{Lem: cohom iso cent}, we may pull this character back along the isomorphism
 \[
 \calc(H_x/F)\iso \calc(T_{r(x)}/F),
 \]
to obtain a character which we also call $\ka: \calc(H_x/F) \to\cc^\times$. We now define the relative $\kappa$-orbital integral to be
\begin{equation*}
\RO^\kappa(x,f) := \sum_{x'\sim x}\kappa(\inv(x,x'))\RO(x',f),
\end{equation*}
where $x'$ ranges over the representatives of the $U(W_1)\times U(W_2)$-orbits stably conjugate to $x$. By Lemma \ref{Lem: cohom iso cent}, $\inv(x,x')=\inv(r(x),r(x'))$.

We stated the definition of smooth transfer in the context of relative endoscopy in Definition \ref{Def: smooth transfer}.

\begin{Conj}\label{Conj: transfer exists}
For any relative endoscopic datum $(\xi,\al,\be)$ and any  $f\in C_c^\infty(\fg_1)$, there exists $f_{\al,\be}\in C^\infty_c(\fh^{\al,\be}_1)$. such that $f$ and $f_{\al,\be}$ match.
\end{Conj}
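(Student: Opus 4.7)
My proposed approach follows the strategy of Waldspurger \cite{Waldspurgerimplies} for the Arthur--Selberg trace formula and of Chong Zhang \cite{chongzhang} for the Guo--Jacquet relative trace formula: deduce smooth transfer from the fundamental lemma for the full spherical Hecke algebra via Harish-Chandra descent around every semi-simple orbit, with the entire argument structured as an induction on $d=\dim W/2$.

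First, a smooth partition of unity subordinate to an invariant cover of $\fg_1$ by tubular neighborhoods of semi-simple orbits reduces the problem to producing a transfer for each $f \in C_c^\infty(\fg_1)$ whose support meets only one semi-simple orbit $\calo_{x_0}$. When $r(x_0)\in \Herm(W_1)^{iso}$, we can shrink the neighborhood so that $\supp(f)\subset \fg_1^{iso}$, and then Proposition \ref{Prop: regular transfer} produces the transfer directly via the contraction $r$ and Theorem \ref{Thm: smooth transfer lie algebra}. We may therefore assume $r(x_0)$ is singular.

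Second, for singular semi-simple $x_0$, apply Luna's slice theorem to the $U(W_1)\times U(W_2)$-action on $\fg_1$ at $x_0$. The goal is to show that the slice representation of $H_{x_0}$ on $N_{x_0}=\fg_1/T_{x_0}\calo_{x_0}$ decomposes as a direct sum of infinitesimal symmetric pairs of the same form as $(U(W_1)\times U(W_2),\fu(W)_1)$, of strictly smaller rank and defined over the product of field extensions of $F$ coming from the rational canonical form of $r(x_0)$. The inductive hypothesis then provides smooth transfers on each slice factor, and these glue via the slice isomorphism to a transfer of $f$ on a neighborhood of $\calo_{x_0}$. Lemma \ref{Lem: cohom iso cent} together with the definition \eqref{eqn: transfer factor} should imply that the $\kappa$-orbital integrals and the transfer factor $\De_{rel}$ restrict compatibly to the slice, so that the identity of Definition \ref{Def: smooth transfer} descends. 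The one orbit not accessible by induction is $\calo_0=\{0\}$, for which the matching of germs is equivalent, following Waldspurger's scheme, to a Hecke-algebra version of the fundamental lemma; Theorem \ref{Conj: full fundamental lemmaintro} furnishes this for the unit element, and extending it to the full spherical Hecke algebra of $U(W)$ via a relative endoscopic Satake homomorphism would complete the induction.

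The main obstacle is the slice analysis in the second step. In the Lie-algebra setting (as in \cite{WaldsLiealgebra}) the analogous decomposition is classical, but here the $\sig$-eigenspace grading interacts with the eigenspace decomposition of the Hermitian operator $r(x_0)$ in a subtle way: eigenspaces of $r(x_0)$ that are not permuted in pairs under the relevant Galois action tend to contribute summands that are naturally twisted Lie algebras $\Herm(W')$ rather than infinitesimal symmetric pairs of the form $(U(W_1')\times U(W_2'),\fu(W')_1)$. One must therefore broaden the inductive hypothesis to include the unitary Lie-algebra transfer of Theorem \ref{Thm: smooth transfer lie algebra} for these summands, and verify an explicit product factorization of $\De_{rel}$ into the appropriate slice factors and classical factors. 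A careful orbit-by-orbit bookkeeping of exactly which factors appear, and the compatibility of the invariant $\inv$ under this decomposition, is the essential technical content that remains to be checked.
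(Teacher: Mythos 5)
The statement you are addressing is explicitly labeled a \emph{Conjecture} in the paper, and it is not proved there. The paper establishes only the special case in which $\supp(f)\subset\fg_1^{iso}$ (Proposition \ref{Prop: regular transfer}), and immediately afterwards notes that ``the main obstruction to proving Conjecture \ref{Conj: transfer exists}'' is controlling the singularities of the contraction $r$ off the non-singular locus. The author also states that the expected reduction of smooth transfer to the fundamental lemma --- the Waldspurger/Chong Zhang route you describe --- is ``work-in-progress.'' So there is no proof in the paper for you to match; your first step (partition of unity plus Proposition \ref{Prop: regular transfer} over $\fg_1^{iso}$) reproduces exactly what the paper proves, and everything beyond that is new territory the paper does not claim.

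Within your proposed plan itself there are genuine gaps that go beyond ``bookkeeping to be checked.'' Your induction at the origin requires the fundamental lemma for the \emph{full spherical Hecke algebra}, but the paper only states the unit-element fundamental lemma (Theorem \ref{Conj: full fundamental lemmaintro}), proves it only for $(U(4),U(2)\times U(2))$ (Section \ref{Section: rank 2}), and refers to \cite{leslieFL} for the general case; no relative endoscopic Satake homomorphism is constructed anywhere, so the extension to the Hecke algebra is an unjustified input, not a corollary. Your Luna slice step is the analogue of the truly delicate part of \cite{WaldsLiealgebra}: you would need to show that the slice representations at singular semi-simple points of $\fg_1$ are themselves products of lower-rank infinitesimal symmetric pairs of the same shape (possibly together with unitary twisted Lie algebras $\Herm(W')$), that the matching of orbits, the character $\kappa$ via Lemma \ref{Lem: cohom iso cent}, and the transfer factor $\De_{rel}$ of \eqref{eqn: transfer factor} all factor compatibly through this decomposition, and that the resulting local transfers glue --- none of which is carried out, and for which there is currently no proof in the literature. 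You have correctly diagnosed where the difficulty lies, but as written the argument begs precisely the questions the conjecture is asking, and it should be presented as a program rather than a proof.
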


We remark that this conjecture does not depend on the choice of Haar measures used in defining the orbital integrals; coherent choices are needed for the fundamental lemma in the next section. We do not prove this conjecture in general, but are able to establish it in many cases.
\begin{Prop}\label{Prop: regular transfer}
Let $f\in C^\infty_c(\fg_1)$ and assume $\mathrm{supp}(f)\subset \fg_1^{iso}$. 
 Then there exists $f_{\al,\be}\in  C^\infty_c(\fh^{\al,\be}_1)$ such that $f$ and $f_{\al,\be}$ match.
\end{Prop}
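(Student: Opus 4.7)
The plan is to reduce to the classical smooth transfer for unitary twisted Lie algebras (Theorem \ref{Thm: smooth transfer lie algebra}) by exploiting the $U(W_2)$-torsor structure of the contraction map $r: \fg_1^{iso} \to \Herm(W_1)^{iso}$ established in Lemma \ref{Lem: centralizer contraction}. First, I would push $f$ forward: since $\supp(f) \subset \fg_1^{iso}$, the pushforward $r_!(f)$ lies in $C_c^\infty(\Herm(W_1)^{iso})$ and is supported in the piece of the decomposition from Corollary \ref{Cor: torsor rational} indexed by $W_2$. Combining Lemma \ref{Lem: orbital reduction} with the cohomological identification of Lemma \ref{Lem: cohom iso cent} then gives
\[
\RO^\ka(x, f) = \Orb^\ka(r(x), r_!(f))
\]
for every regular semi-simple $x \in \fg_1$. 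Applying Theorem \ref{Thm: smooth transfer lie algebra} produces $F_{a,b} \in C_c^\infty(\Herm(V_a) \oplus \Herm(V_b))$ matching $r_!(f)$.

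To construct $f_{\al,\be}$ from $F_{a,b}$, I would use the endoscopic analog of Corollary \ref{Cor: torsor rational}, which decomposes $\Herm(V_a)^{iso} \oplus \Herm(V_b)^{iso}$ into clopen pieces indexed by pairs of pure inner forms, with the $(\al,\be)$-piece being the image of $\fh^{\al,\be,iso}_1$ under $r_{\al,\be}$. The plan is to restrict $F_{a,b}$ to this $(\al,\be)$-piece (which remains compactly supported since the piece is clopen) and then pull it back through the surjective pushforward (the analog of Proposition \ref{Prop: surjective schwartz}) to obtain $f_{\al,\be} \in C_c^\infty(\fh^{\al,\be,iso}_1)$. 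Condition (1) of Definition \ref{Def: smooth transfer} would follow by chaining Lemma \ref{Lem: orbital reduction} on the endoscopic side, the matching identity for $F_{a,b}$, and the definition \eqref{eqn: transfer factor} of $\De_{rel}$.

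For condition (2), I would split into cases. If $(\de_a,\de_b) := r_{\al,\be}(x_a,x_b)$ has no match in $\Herm(W_1)^{rss}$, classical transfer gives $\SO((\de_a,\de_b), F_{a,b}) = 0$ directly. Otherwise $(\de_a,\de_b)$ matches some $\de \in \Herm(W_1)^{iso}$ which must lie in a piece other than the $W_2$-piece (else a matching $x$ would exist in $\fg_1$, contradicting our hypothesis); since $r_!(f)$ is supported only in the $W_2$-piece, $\Orb^\ka(\de, r_!(f)) = 0$, and the vanishing propagates to $\SRO((x_a,x_b), f_{\al,\be})$ via Lemma \ref{Lem: orbital reduction}. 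The main obstacle I anticipate is a bookkeeping one: verifying that stable conjugacy in $\Herm(W_1)^{iso}$ preserves the pure inner form piece so that the $\ka$-sum really vanishes. This should follow from the fact that the discriminant of the Hermitian form $(v,w) \mapsto -\la \de v, w \ra_1$ on $W_1$ is a stable invariant, at least in the non-archimedean setting.
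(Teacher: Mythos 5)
Your proposal is correct and follows essentially the same route as the paper's proof: push forward via the $U(W_2)$-torsor $r$ to get a compactly supported function on $\Herm(W_1)^{iso}$, invoke classical smooth transfer (Theorem \ref{Thm: smooth transfer lie algebra}), restrict to the nonsingular locus of the endoscopic Hermitian spaces, and then lift through the torsor decomposition of Corollary \ref{Cor: torsor rational} / Proposition \ref{Prop: surjective schwartz}. The only cosmetic difference is that you restrict $F_{a,b}$ to the $(\al,\be)$-piece and pull back, whereas the paper lifts $f_{a,b}$ over all pure inner form pieces simultaneously and extracts the $(\al,\be)$-component — these are the same construction — and your final "bookkeeping" worry (that stable conjugacy preserves the pure inner form piece of $\Herm(W_1)^{iso}$) is indeed the point that makes the central identity $\SRO((x_a,x_b),f_{\al,\be})=\SO((\de_a,\de_b),f_{a,b})$ work, and is implicitly used in the paper.
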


\begin{proof}
As we have seen, the restriction of the contraction map 
\[
r: \fg_1^{iso}\to \Herm(W_1)^{iso}
\]is a $U(W_2)$-torsor. In particular, it is a submersion onto its image, implying that if $\mathrm{supp}(f)\subset \fg_1^{iso}$,
\[
r_!(f)\in C^\infty_c(\Herm(W_1)^{iso})
\]
is also compactly supported. Setting $\de=r(x)$, we may now apply Theorem \ref{Thm: smooth transfer lie algebra}
 to find a smooth compactly-supported function $$f_{a,b}:\Herm(V_a)\oplus\Herm(V_b)\to \cc$$ matching $r_!(f)$.
 


 
 Since the open subset 
\[
\Herm(V_a)^{iso}\oplus\Herm(V_b)^{iso}\subset \Herm(V_a)\oplus\Herm(V_b)
\]
 is determined by the non-vanishing of the determinant, it follows that $\de\in \Herm(W_1)^{iso}$ if and only if $(\de_a,\de_b)\in \Herm(V_a)^{iso}\oplus\Herm(V_b)^{iso}$. In particular, 
 \[
 \SO((\de_a,\de_b),f_{a,b}) =\De((\de_a,\de_b),\de)\Orb^\ka(\de,r_!(f))=0
 \]
 whenever $\de\in \Herm(W_1)^{rss}$ does not lie in $\Herm(W_1)^{iso}$. We thus lose no orbital integral information by replacing $f_{a,b}$ with
 \[
f_{a,b}\cdot \left(\bfun_{\Herm(V_a)^{iso}}\otimes\bfun_{\Herm(V_b)^{iso}}\right).
 \] 
In particular, we can assume  $\supp(f_{a,b})\subset \Herm(V_a)^{iso}\oplus\Herm(V_b)^{iso}$. 

Considering the $U(V_\al)\times U(V_\be)$-torsor
\[
r_{\al,\be}:(\fh^{\al,\be}_1)^{iso}\lra\Herm(V_a)^{iso}\oplus\Herm(V_b)^{iso},
\]another application of Corollary \ref{Cor: torsor rational} implies that $\Herm(V_a)^{iso}\oplus\Herm(V_b)^{iso}$ decomposes as a disjoint union
\[
 \bigsqcup_{[\ep,\nu]\in H^1(F,U(V_a))\times H^1(F,U(V_b))} (\fh^{\al,\be}_1)^{iso}/U(V_\ep)\times U(V_\nu).
\]
 Proposition \ref{Prop: surjective schwartz} now implies that there exist functions $$f_{\ep,\nu}\in C^\infty_c((\fh^{\ep,\nu}_1)^{iso}),$$ with $\{(\ep,\nu)\}$ ranging over pure inner forms $H^1(F,U(V_a)\times U(V_b))$, such that $$f_{a,b} = \sum_{\ep,\nu}(r_{\ep,\nu})_!(f_{\ep,\nu}).$$
 In this way, for  $r_{\al,\be}(x_a,x_b) = (\de_a,\de_b)$ we find that 
\begin{equation*}
\SRO((x_a,x_b),f_{\al,\be})=\SO((\de_a,\de_b),f_{a,b}),
\end{equation*}
 proving the proposition.
\end{proof}

The main obstruction to proving Conjecture \ref{Conj: transfer exists} is that while $r_!(f)$ is always of relatively compact support. More information is needed about the singularities of this map to extend the previous result.

\subsection{The endoscopic fundamental lemma}
We now assume that $E/F$ is an unramified extension of non-archimedean local fields. Suppose that $V_d=W_1=W_2$ is split. In the non-archimedean setting, this implies that there exists a self-dual lattice $\Lam_d\subset V_d$, a choice of which we now fix. In this case,
\[
\fu(W)_1= \Hom_E(V_d,V_d) =\End(V_d),
\] 
and the choice of self-dual lattice $\Lam_d$ gives a natural compact open subring $\End(\Lam_d)\subset \End(V_d)$. Let $\bfun_{\End(\Lam_d)}$ denote the indicator function for this subring. This also induces an integral model $\mathbf{U}(\Lam_d)$ of $U(V)$. Setting $U(\Lam_d)\subset U(V)$ as the $\calo_F$-points, this gives a hyperspecial maximal compact subgroup.

 \begin{Lem}\label{Lem: unramified isom}
 Suppose that $\xi=(U(V_a)\times U(V_b),s,\eta)$ is an endoscopic datum for $\Herm(V_d)$. Under our assumptions, we have $V_d\cong V_a\oplus V_b$.
 \end{Lem}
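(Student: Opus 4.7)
The plan is to reduce to the classification of Hermitian spaces over $E$ in the unramified setting. Recall that the paper's convention reserves the notation $V_n$ for a fixed split Hermitian space of dimension $n$, so by assumption $V_a$, $V_b$, and $V_d$ are all split; the task is to show that their dimensions matching ($d = a+b$) forces $V_a \oplus V_b \cong V_d$.

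First, I would recall the standard classification of Hermitian forms over a non-archimedean local field. When $E/F$ is an unramified quadratic extension, isomorphism classes of $n$-dimensional Hermitian spaces are in bijection with the discriminant group $F^\times/\Nm(E^\times) \cong \zz/2\zz$. Under this bijection, a Hermitian space is split (in the sense of the paper's footnote, i.e.\ admits a self-dual lattice, equivalently has a maximal isotropic subspace of dimension $\lfloor n/2 \rfloor$) precisely when its discriminant is trivial. I would cite \cite[Theorem 3.1.1]{Jacobowitz} (already invoked earlier in the excerpt) for this classification.

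Next, I would observe that the discriminant is multiplicative under orthogonal direct sums. Since $V_a$ and $V_b$ are split, they each have trivial discriminant, so $V_a \oplus V_b$ has trivial discriminant as well, and thus is itself split. Alternatively, and perhaps more transparently, if $\Lam_a \subset V_a$ and $\Lam_b \subset V_b$ are self-dual lattices (which exist because $V_a$ and $V_b$ are split), then $\Lam_a \oplus \Lam_b$ is a self-dual lattice in $V_a \oplus V_b$, directly exhibiting $V_a \oplus V_b$ as split.

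Finally, since $V_a \oplus V_b$ and $V_d$ are both $d$-dimensional split Hermitian spaces over $E$, the uniqueness part of the classification gives an isometry $V_a \oplus V_b \cong V_d$, completing the proof. No step here is a genuine obstacle; the only subtlety is making sure the convention that ``$V_n$'' always refers to the split form is used consistently, so that the endoscopic data fix $V_a$ and $V_b$ to be split rather than arbitrary Hermitian spaces of those dimensions.
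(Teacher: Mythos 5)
Your proof is correct and follows essentially the same route as the paper: both invoke Jacobowitz's classification of Hermitian spaces over a local field by dimension and discriminant in $F^\times/\Nm(E^\times)$, observe that split is equivalent to trivial discriminant in the unramified non-archimedean setting, and use multiplicativity of the discriminant under orthogonal sums. Your alternative observation that $\Lam_a \oplus \Lam_b$ is directly a self-dual lattice is a nice shortcut the paper does not spell out, but it lands in the same place.
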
 
 \begin{proof}
 First, we recall the notion of a determinant $d(V)$ of a Hermitian space $V$: for any basis $\{x_\lam\}$ of $V$, set let $\det(\la x_\lam,x_\mu\ra)$ be the determinant of the resulting matrix representation of the Hermitian form. Change of basis multiplies this by a norm from $E^\times$, so we set $d(V)$ to be the  resulting class in $F^\times/\Nm_{E/F}(E^\times)$. The theorem of Jacobowitz \cite[Theorem 3.1.1]{Jacobowitz} tells us that two Hermitian spaces $V$ and $W$ are isomorphic if and only if
 \[
 \text{$\dim(V)=\dim(W)$ and $d(V)\sim d(W)$.}
 \]
 
 In the unramified non-archimedean setting, it is well known that a Hermitian space is split if and only if  $d(V)$ is a norm. For example, we can choose a basis such with respect to which the form is represented by the identity matrix. Since 
 \[
 d(V_a\oplus V_b) = d(V_a)\cdot d(V_b)
 \]
 is then also a norm, the lemma follows.
  \end{proof}
 We fix such an isomorphism by imposing $\Lam_d=\Lam_a\oplus \Lam_b$ for fixed self-dual lattices $\Lam_a\subset V_a$ and $\Lam_b\subset  V_b$; this is determined up to $U(\Lam_d)\times U(\Lam_d)$-conjugation.  Note that there are only four possible pairs $(\al,\be)$, and we set $(\al_0,\be_0)$ to be the split pair. We equip these groups with Haar measures normalized so that the given hyperspecial maximal subgroups have volume $1$.
\begin{Thm}\label{Conj: full fundamental lemma}(Relative fundamental lemma)
 If $(\al,\be) = (\al_0,\be_0)$, the functions $\bfun_{\End(\Lam_d)}$ and $\bfun_{\End(\Lam_a)}\otimes\bfun_{\End(\Lam_b)}$ are smooth transfers. Otherwise, $\bfun_{\End(\Lam_d)}$ matches $0$.
\end{Thm}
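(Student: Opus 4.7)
The plan is to reduce the relative fundamental lemma to the classical fundamental lemma for the twisted unitary Lie algebra $\Herm(V_d)$ via the contraction $r\colon\fu(W)_1\to\Herm(W_1)$. By Lemma \ref{Lem: orbital reduction}, each relative orbital integral equals an ordinary Lie algebra orbital integral of the pushforward function, and by Lemma \ref{Lem: cohom iso cent} the canonical isomorphism $\calc(H_x/F)\iso\calc(T_{r(x)}/F)$ intertwines the endoscopic $\ka$-characters. Since matching of $x$ with $(x_a,x_b)$ is defined to be matching of $r(x)$ with $r_{\al,\be}(x_a,x_b)$, and $\De_{rel}((x_a,x_b),x)=\De(r_{\al,\be}(x_a,x_b),r(x))$, the relative transfer identity for the test-function pair $(\bfun_{\End(\Lam_d)},\bfun_{\End(\Lam_a)}\otimes\bfun_{\End(\Lam_b)})$ in the split case $(\al_0,\be_0)$ becomes equivalent to a Lie-algebra transfer identity for the pushforwards $r_!(\bfun_{\End(\Lam_d)})$ and $(r_{\al_0,\be_0})_!(\bfun_{\End(\Lam_a)}\otimes\bfun_{\End(\Lam_b)})$, while the vanishing claim for $(\al,\be)\neq(\al_0,\be_0)$ reduces to vanishing of appropriate $\ka$-orbital integrals of $r_!(\bfun_{\End(\Lam_d)})$ on $\Herm(V_d)^{rss}$.

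Next I identify these pushforwards on the regular semi-simple locus. By Lemma \ref{Lem: regular semi-simple locus} we have $\fu(W)_1^{rss}\subset\fu(W)_1^{iso}$, and on the non-singular locus $r$ is a $U(W_2)$-torsor (Lemma \ref{Lem: centralizer contraction}). For $\de\in\Herm(V_d)^{rss}$ lying in $\End(\Lam_d)$ with $\det(\de)$ a unit, the unramified hypothesis guarantees that every unimodular Hermitian lattice over $\calo_E$ is split, so there exists $X_0\in\GL(\Lam_d)$ with $-X_0X_0^*=\de$; then $r^{-1}(\de)\cap\End(\Lam_d)=X_0\cdot U(\Lam_d)$ has volume $1$ under our normalization. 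Thus $r_!(\bfun_{\End(\Lam_d)})$ agrees with $\bfun_{\Herm(\Lam_d)}$ on the open subset of $\Herm(V_d)^{rss}$ of unit-determinant elements, and a parallel computation identifies $(r_{\al_0,\be_0})_!(\bfun_{\End(\Lam_a)}\otimes\bfun_{\End(\Lam_b)})$ with $\bfun_{\Herm(\Lam_a)}\otimes\bfun_{\Herm(\Lam_b)}$ on the corresponding locus.

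The third step invokes the Lie algebra fundamental lemma for the unitary group in the unramified setting: for matching $\de$ and $(\de_a,\de_b)$,
\begin{equation*}
\SO\bigl((\de_a,\de_b),\bfun_{\Herm(\Lam_a)}\otimes\bfun_{\Herm(\Lam_b)}\bigr)=\De((\de_a,\de_b),\de)\,\Orb^\ka(\de,\bfun_{\Herm(\Lam_d)}),
\end{equation*}
which is the unitary analogue of Laumon--Ng\^o, available in our setting through Waldspurger and Xiao. Combined with Steps~1--2 this yields the transfer identity in the split case. For $(\al,\be)\neq(\al_0,\be_0)$, the required vanishing follows because the image of $r_{\al,\be}$ on the regular locus lands in the Hermitian-class component of $\Herm(V_a)\oplus\Herm(V_b)$ determined by $[\al]$ and $[\be]$; Jacquet--Langlands matching with some $r(x)$ for $x\in\End(\Lam_d)$ forces the $\kappa$-orbital integral of $\bfun_{\Herm(\Lam_d)}$ at $r(x)$ to vanish, by the cohomological compatibility of Lemma \ref{Lem: cohom iso cent} applied to the trivialized inner form coming from $\End(\Lam_d)$.

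The principal obstacle is the fiber computation for $r_!$ at regular semi-simple $\de$ whose determinant lies in $\calo_F\setminus\calo_F^\times$: here $r_!(\bfun_{\End(\Lam_d)})$ need not equal $\bfun_{\Herm(\Lam_d)}$ pointwise, and a finer analysis of the singularities of $r$ is needed to show that the discrepancy is compatible with (or absorbed by) the Lie algebra fundamental lemma on the regular locus. For the rank-$2$ case treated in Section \ref{Section: rank 2} this is sidestepped by direct enumeration of all regular semi-simple orbits together with an explicit orbital integral computation; for the general case, \cite{leslieFL} establishes the theorem via a global comparison of relative trace formulae, which bypasses the local fiber-measure subtleties entirely.
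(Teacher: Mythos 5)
The paper does not actually prove Theorem \ref{Conj: full fundamental lemma} here: it explicitly defers the proof to \cite{leslieFL} and only establishes the rank-$2$ case $(U(V_4),U(V_2)\times U(V_2))$ in Section \ref{Section: rank 2} by direct computation. So there is no general proof in this paper to compare your argument against. What I can assess is whether your reduction strategy, presented as a putative proof in your first three paragraphs, actually works; and whether your gap analysis in the final paragraph is accurate.

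Your reduction framework (Lemma \ref{Lem: orbital reduction} to pass from relative orbital integrals to Lie algebra orbital integrals of pushforwards, Lemma \ref{Lem: cohom iso cent} to intertwine the $\ka$-characters, and the definitions of matching and of $\De_{rel}$) is exactly the right scaffolding and matches how the paper sets things up. However, your step three does not close the argument, and the obstacle you flag at the end is not peripheral but is in fact the entire content of the theorem. The pushforward $r_!(\bfun_{\End(\Lam_d)})$ is \emph{not} the basic function $\bfun_{\Herm(\Lam_d)}$, not even approximately: Proposition \ref{Lem: standard pushforward} computes it in the rank-$2$ case to be
\[
\Phi_n=\sum_{k=0}^{n/2}q^k\,\bfun_{\Herm(\Lam)_{n,k}}
\]
on the locus $\val(\det)=n$, a $q$-weighted sum over shrinking compact sets. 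The classical unitary Lie algebra fundamental lemma (Laumon--Ng\^o, Waldspurger, Xiao) speaks to the transfer of $\bfun_{\Herm(\Lam_d)}$ itself, not to this weighted pushforward. So invoking that FL gives you nothing directly for $n>0$, which is precisely what the rank-$2$ computation in Section \ref{Section: rank 2} has to work around by brute-force evaluation of all orbital integrals of $\Phi_n$. In other words, what is actually required is a new family of transfer identities for the functions $r_!(\bfun_{\End(\Lam)})$, and these cannot be derived from the classical FL by the route you sketch. Your honest concession that "\cite{leslieFL} establishes the theorem via a global comparison of relative trace formulae, which bypasses the local fiber-measure subtleties" is accurate and is also exactly what the paper says, so you have correctly understood where the difficulty lives. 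Two smaller cautions: your unit-determinant fiber computation is fine but covers only a measure-zero slice of the contribution when integrating against test functions; and your vanishing argument for $(\al,\be)\neq(\al_0,\be_0)$ is too vague as stated --- the assertion is a genuine vanishing of $\ka$-orbital integrals of $\bfun_{\End(\Lam_d)}$, not merely a statement about which component of $\Herm(V_a)\oplus\Herm(V_b)$ the image of $r_{\al,\be}$ lands in, and Lemma \ref{Lem: cohom iso cent} alone does not deliver it.
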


This theorem is the main result of \cite{leslieFL}, and its proof is beyond the scope of this paper. In the next section, we give a proof of this statement in the case of $(U(V_4),U(W_2)\times U(W_2))$. 

\section{The relative fundamental lemma for {$(U(V_4),U(V_2)\times U(V_2))$}}\label{Section: rank 2}
 We continue with the assumption that $E/F$ is an unramified extension of non-archimedean local fields. Let $U(V_4)$ be the quasi-split unitary group of rank $4$ and $(U(V_2)\times U(V_2), \End(V_2))$ the associated symmetric space. In this case, the only non-trivial endoscopic space to consider is $\End(V_1)\oplus \End(V_1)\cong E\oplus E$ with the action of $[U(V_1)\times U(V_1)]\times[U(V_1)\times U(V_1)]$. 

\begin{Thm}\label{Thm: fundamental lemma}
For the endoscopic space $\End(V_1)\oplus \End(V_1)$ of $\End(V_2)$, the fundamental lemma holds. 
\end{Thm}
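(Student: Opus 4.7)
The plan is to reduce the relative fundamental lemma to orbital integral identities on the twisted unitary Lie algebra $\Herm(V_2)$ via the contraction map, and then verify these identities by direct computation in rank two. Concretely, by Lemma \ref{Lem: orbital reduction} together with the inclusion $\fu(W)_1^{rss}\subset\fu(W)_1^{iso}$ of Lemma \ref{Lem: regular semi-simple locus}, every regular semisimple relative orbital integral of $\bfun_{\End(\Lam_2)}$ can be rewritten as a classical orbital integral of the pushforward $r_!(\bfun_{\End(\Lam_2)})$ on $\Herm(V_2)$, and Lemma \ref{Lem: cohom iso cent} identifies the $\ka$-characters on $\calc(H_x/F)$ and $\calc(T_{r(x)}/F)$ so that $\ka$-orbital integrals transport compatibly. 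The same reduction on the endoscopic side sends $\bfun_{\End(\Lam_1)}\otimes\bfun_{\End(\Lam_1)}$ to a function on $\Herm(V_1)\oplus\Herm(V_1)\cong E\oplus E$.

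Next, I would compute the pushforward $r_!(\bfun_{\End(\Lam_2)})$ explicitly. For a nonsingular $\de\in\Herm(V_2)$ the fiber $r^{-1}(\de)$ is a principal $U(V_2)$-torsor, so the value of the pushforward at $\de$ equals the volume of $r^{-1}(\de)\cap\End(\Lam_2)$ with respect to a quotient measure; this volume is a lattice-point count which, using the elementary divisor theorem for Hermitian lattices over the unramified extension, becomes an explicit function of the invariants $(a_1(\de),a_2(\de))$. On the endoscopic side the analogous pushforward on $E\oplus E$ is even simpler, and matching via Jacquet--Langlands (Definition \ref{Def: endoscopic matching}) identifies the relevant invariants.

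With these explicit descriptions, I would verify the transfer identity orbit-by-orbit. The stable orbits on $\Herm(V_2)$ stratify according to the factorization type of the characteristic polynomial (inert quadratic torus, split torus, or product of two rank-one tori, taking into account whether each rank-one factor is of norm or nontrivial type). In each stratum both sides of (\ref{eqn: transfer 1}) reduce to elementary sums involving $\omega_{E/F}$, the norm map, and valuations of the invariants, and the transfer factor $\omega_{E/F}(D(\de))|D(\de)|_F$ accounts for the ratio of discriminants. For the split pair $(\al_0,\be_0)$ the two sides match; for the non-split pairs $(\al,\be)$, Lemma \ref{Lem: unramified isom} combined with Jacquet--Langlands matching forces the integral pushforward on the endoscopic side to vanish identically, yielding the $0$-matching.

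The main obstacle will be the combinatorial bookkeeping: keeping straight the parameterization of stable orbits, translating the cohomology classes $\inv(x,x')\in\cald(T_{r(x)}/F)$ into concrete sign invariants via Lemma \ref{Lem: centralizers}, and tracking the $\ka$-characters across the isomorphism of Lemma \ref{Lem: cohom iso cent}. The saving grace is that in rank two the torus $T_{r(x)}$ has rank at most two and $\Herm(V_2)$ is only four-dimensional over $F$, so the case analysis remains finite and each orbital integral admits a closed form, making the direct verification tractable.
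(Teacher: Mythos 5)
Your plan follows the same architecture as the paper's proof: reduce to $\Herm(V_2)$ via the contraction map and Lemma \ref{Lem: orbital reduction}, compute $r_!(\bfun_{\End(\Lam_2)})$ explicitly, transport the $\kappa$-character via Lemma \ref{Lem: cohom iso cent}, and verify the transfer identity by direct computation of orbital integrals in rank two. The paper computes the pushforward using the Iwasawa decomposition $U(V_2)=B\cdot U(\Lam)$ rather than elementary-divisor lattice counting, but these are interchangeable ways of organizing the same finite sum.

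Two places where your bookkeeping could be streamlined or is slightly off. First, you propose stratifying stable orbits on $\Herm(V_2)$ by the full factorization type of the characteristic polynomial, but Lemma \ref{Lem: centralizers} shows that $\calc(T_\de/F)$ is trivial unless $T_\de\cong U(1)\times U(1)$, so rational and stable conjugacy coincide for all other strata and the endoscopic identity there is just equality of stable orbital integrals with a trivial transfer factor. The paper exploits this at the outset of Section \ref{Section: rank 2} to cut the case analysis down to a single torus type; without it your verification, while correct, would carry avoidable dead weight. Second, your explanation of the $0$-matching for non-split $(\al,\be)$ --- that the endoscopic pushforward "vanishes identically" --- is not quite the right mechanism: matching with $f_{\al,\be}=0$ requires showing the $\kappa$-orbital integral of $\bfun_{\End(\Lam_2)}$ vanishes on orbits whose image under $r$ matches elements of $\Herm(V_\al)\oplus\Herm(V_\be)$, which is a constraint coming from the stratification of $\Herm$-orbits by pure inner forms, not from Lemma \ref{Lem: unramified isom} alone. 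The paper's explicit formulas in Propositions \ref{Prop: orbital computation} and \ref{Prop: endoscopic side} handle the split case; you would need to supply the parallel vanishing argument for the non-split pairs rather than appeal to the pushforward being zero.
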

Our proof is computational. Let $\Lam$ be our rank $2$ self-dual lattice, and let $\bfun_{\End(\Lam)}$ be the associated indicator function. The idea is to compute the push forward
\[
r_!(\bfun_{\End(\Lam)})(XX^\ast) = \int_{U(V_2)}\bfun_{\End(\Lam)}(Xh)dh.
\]
Once we have done this, we compute the associated integrals on the twisted Lie algebra and verify the $\kappa$-orbital integrals agree with the stable relative orbital integrals on the endoscopic side. The proof will be completed by combining Proposition \ref{Prop: orbital computation} and Proposition \ref{Prop: endoscopic side} below.

\subsection{Computing the pushforward}
For the sake of computation, we fix an element $\zeta$ such that $E=F(\zeta)$ where $\zeta\in \calo_E^\times$ and $\overline{\zeta}=-\zeta$. Here the overline indicates the non-trivial Galois element. We also fix the split Hermitian form
\[
J= \left(\begin{array}{cc}&\zeta\\-\zeta&\end{array}\right).
\]
The contraction morphism 
\[
r: \End(V_2)\lra \Herm(V_2)
\]
is given by $X\mapsto XX^\ast$, where $X^\ast = J\overline{X}^T J^{-1}$. 

Set $\Phi:=r_!\bfun_{\End(\Lam)}$. Then $\Phi$ is supported on the subset of $\Herm(V_2)$ with $\val(\det))\geq0$. Note that the group $\GL(V_2)$ acts on $\Herm(V_2)$ via the twisted action
\[
g\cdot \de = g\de g^{\ast}.
\]
and that for any $g\in \GL(\Lam)\subset \GL(V_2)$,
\[
\Phi(gr(x)g^\ast) = \int_{U(V_2)} \bfun_{\End(\Lam)}(gxh) dh=\int_{U(V_2)}\bfun_{\End(\Lam)}(gxh) dh=\Phi(r(x)).
\]
Thus, $\Phi$ is constant on $\GL(\Lam)$-orbits of $\Herm(V_2)$. By \cite{jacobowitz1962hermitian}, we may choose the forms
\[
 \varpi^{(i,j)}=\zeta\left(\begin{array}{cc}&\varpi^i\\\varpi^j&\end{array}\right)
\]
as representatives of these orbits.
\begin{Prop}\label{Lem: standard pushforward}
Let $\Phi_n$ be the restriction of $\Phi$ to $\Herm_{\val(\det)=n}$. Then $\Phi_n=0$ if $n$ is odd or $n<0$. If $n\geq0$ is even, we compute that
\[
\Phi_n=\sum_{k=0}^{n/2}\left(\sum_{j=0}^kq^j\right)\mathbf{1}_{\GL(\Lam)\cdot\varpi^{(k,n-k)}}=\sum_{k=0}^{n/2}q^k\bfun_{\Herm(\Lam)_{n,k}},
\]
where $\Herm(\Lam)_{n,k}:=\Herm(V_2)\cap \vp^k \End(\Lam)_{\val(\det)=n-2k}$.
\end{Prop}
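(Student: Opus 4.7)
The plan is to reinterpret $\Phi(\delta)$ as a count of self-dual lattices. For any invertible $X\in\End(V_2)$ satisfying $XX^*=\delta$, the condition $Xh\in\End(\Lam)$ is equivalent to $h(\Lam)\subset L:=X^{-1}\Lam$; since $h\in U(V_2)$ preserves the Hermitian form and $\Lam$ is self-dual, $h(\Lam)$ is also self-dual, and $U(V_2)$ acts transitively on self-dual lattices with stabilizer $U(\Lam)$ of volume one. Hence
\[
\Phi(\delta)\;=\;\#\{L'\subset L\ :\ L'\text{ self-dual}\}.
\]
The vanishing assertion is then immediate: if $\Phi(\delta)>0$ then $\delta=YY^*$ for some $Y\in\End(\Lam)$, so $\det\delta=\Nm_{E/F}(\det Y)$ has even non-negative $F$-valuation. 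Moreover, replacing $X$ by $gX$ for $g\in\GL(\Lam)$ preserves both $\End(\Lam)$ and the integrand while sending $\delta$ to $g\delta g^*$, so $\Phi$ is constant on $\GL(\Lam)$-orbits. It therefore suffices to evaluate $\Phi$ on the representatives $\vp^{(k,n-k)}$ for $0\le k\le n/2$.

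For each such representative I would write down an explicit $X\in\End(\Lam)$ with $XX^*=\vp^{(k,n-k)}$, for instance
\[
X\;=\;\begin{pmatrix}1 & \zeta\vp^{k}/2\\ \vp^{(n-2k)/2} & -\zeta\vp^{n/2}/2\end{pmatrix},
\]
and compute the Smith normal forms of $X$ and $X^*$ to obtain a chain $L^\vee=X^*\Lam\subset\Lam\subset L=X^{-1}\Lam$ in which each successive quotient is cyclic of length $n/2$ over $\calo_E$. A further linear-algebra computation in this basis identifies the middle quotient as the Hermitian $\calo_E$-module
\[
L/L^\vee\;\cong\;\calo_E/\vp^{\,n-k}\;\oplus\;\calo_E/\vp^{\,k},
\]
equipped with the induced non-degenerate pairing, which one checks is of split (hyperbolic) type.

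Any self-dual sublattice $L'\subset L$ automatically satisfies $L^\vee\subset L'$, and then $L'/L^\vee$ is a Lagrangian $\calo_E$-submodule of $L/L^\vee$; conversely every such Lagrangian lifts to a unique self-dual $L'$. The count of Lagrangians in the Hermitian module above is $1+q+q^2+\cdots+q^k$, obtainable either by induction on $k$ (peeling off the minimal-exponent cyclic isotropic summand at each step) or by parametrising Lagrangians via their image in the socle. This gives the first equality of the proposition. The second equality is purely combinatorial: the decomposition $\Herm(\Lam)_{n,k}=\bigsqcup_{i=k}^{n/2}\GL(\Lam)\cdot\vp^{(i,n-i)}$ converts one expression into the other after an exchange of the order of summation.

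The main obstacle is the combination of (i) pinning down the precise Hermitian-module structure of $L/L^\vee$ — in particular confirming that the induced pairing is of split type for every $k\in[0,n/2]$, rather than merely identifying the underlying $\calo_E$-module — and (ii) carrying out the ensuing Lagrangian count. Once the module is placed in its predicted form, the enumeration is a relatively standard Gaussian-binomial-style argument, but the split-type verification requires careful bookkeeping with the explicit generators chosen above.
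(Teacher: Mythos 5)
Your proposal takes a genuinely different route from the paper. The paper computes $\Phi(\vp^{(i,j)})$ directly by Iwasawa decomposition on $U(V_2)$: it writes the integral as a sum over coset representatives of $B/(B\cap U(\Lam))$, parametrizes those representatives explicitly, and counts which ones give an integral matrix. You instead rewrite the integrand condition $Xh\in\End(\Lam)$ as a lattice condition $h(\Lam)\subset L = X^{-1}\Lam$, use transitivity of $U(V_2)$ on self-dual lattices to convert $\Phi(\delta)$ into a count of self-dual lattices in $L$, and then reduce to enumerating Lagrangian submodules of the Hermitian torsion module $L/L^\vee$. Both approaches are sound; yours is more conceptual and tracks the answer in structural terms, while the paper's is more hands-on. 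The combinatorial identity between the two displayed expressions is handled the same way in both.

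One small but worth-flagging correction to your sketch: the induced pairing on $L/L^\vee$ is \emph{not} of hyperbolic (off-diagonal) type. Computing via $\langle X^{-1}u, X^{-1}v\rangle = \langle u, \delta^{-1}v\rangle$ for $\delta = \vp^{(k,n-k)}$, the Gram matrix of the pairing on $\Lam/\delta\Lam$ (valued in $E/\calo_E$) is $J\delta^{-1} = \mathrm{diag}(\vp^{-k}, -\vp^{-(n-k)})$, which is diagonal: the two cyclic factors $\calo_E/\vp^k$ and $\calo_E/\vp^{n-k}$ are mutually orthogonal, each carrying the (unique up to isomorphism, since $E/F$ is unramified) perfect rank-one Hermitian pairing. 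This actually makes the situation cleaner than your sketch suggests—there is no ``split vs.\ non-split'' dichotomy to resolve here—but the Lagrangian count, which you correctly defer, is not simply a count of isotropic subspaces of a hyperbolic space. A direct check for small $k$ confirms the answer $1+q+\cdots+q^k$: for instance at $k=2$, $n=4$ one finds $q(q+1)$ cyclic Lagrangians of the form $\calo_E(1,u)$ with $u\bar u=1$ in $\calo_E/\vp^2$ together with the single Lagrangian $\vp M$, totaling $q^2+q+1$. Carrying out the general count is the only remaining step, and it is routine though not entirely trivial.
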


\begin{proof}
As noted above, if $M\in Im(r)$, then $\val(\det(M))$ is even. This implies that $\Phi_n=0$ when $n$ is odd, so we assume now that $n$ is even.  Also, $\Phi_n=0$ for $n<0$. Finally, the equality of the two expressions for $\Phi_n$ is a simple exercise. We thus show the left-most expression.

We need only to compute $\Phi(\varpi^{(i,j)})$. Since $\Phi(\varpi^{(i,j)})= \Phi(\varpi^{(j,i)})$, we are free to assume that $i\leq j$. Noting that $j-i=2l$ is even, we choose a section of the invariant map $r$ over $\vp^{(i,j)}$:
\[
X_{(i,j)}=\left(\begin{array}{cc}1&\frac{\vp^i\zeta}{2}\\\vp^l&-\frac{\vp^{i+l}\zeta}{2}\end{array}\right),
\]
where $j-i=2l$. Then $r(X_{(i,j)})=X_{(i,j)}X_{(i,j)}^\ast =\vp^{(i,j)}$.  

We have the maximal compact subgroup $U(\Lam)\subset U(V_2)$. 
Our choice of Hermitian form implies that the group
\[
B=\left\{\left(\begin{array}{cc}t&\\&\overline{t}^{-1}\end{array}\right)\left(\begin{array}{cc}1&x\\&1\end{array}\right): t\in E^\times, x\in F\right\}
\]
is the $F$-points of a Borel subgroup of $U(V_2)$. The Iwasawa decomposition implies that
\begin{align*}
\Phi(\vp^{(i,j)})&=\int_{U(V_2)}\mathbf{1}_{\End(\Lam)}(X_{(i,j)}h)dh\\
		&=\sum_{b\in B/(B\cap U(\Lam))}\mathbf{1}_{\End(\Lam)}(X_{(i,j)}b).
\end{align*}
Now the product is of the form
\[
\left(\begin{array}{cc}1&\frac{\vp^i\zeta}{2}\\\vp^l&-\frac{\vp^{i+l}\zeta}{2}\end{array}\right)\left(\begin{array}{cc}1&u\\&1\end{array}\right)\left(\begin{array}{cc}t&\\&t^{-1}\end{array}\right)=\left(\begin{array}{cc}t&\frac{2u+\vp^i\zeta}{2t}\\t\vp^l&\vp^l\frac{2u-\vp^{i}\zeta}{2t}\end{array}\right).
\] 
Therefore, we need $\val(t)\geq0$, and 
\[
\val(2u+\vp^i\zeta)\geq \val(t), \text{ and }\val(2u-\vp^i\zeta)+l\geq \val(t).
\]
A set of representatives of the quotient $B/B\cap K$ is given by 
\[
\left(\begin{array}{cc}1&u\\&1\end{array}\right)\left(\begin{array}{cc}\vp^k&\\&\vp^{-k}\end{array}\right),
\]
with $k\in \zz_{\geq0}$ and $u\in F/\vp^{2k}\calo_F$. Since $u\in F$ and $b=\vp^i\zeta\in F\zeta$, we have $\val(u+b)=\val(u-b)=\min\{\val(u),\val(b)\}$. In particular, $i\geq k$, so that $i\geq 0$.

For each $0< k\leq i$, where $\val(t)=k$, we are free to pick any coset with $u\in \vp^k\calo/\vp^{2k}\calo$ so that $2k-1\geq \val(u)\geq k$ so that there are $q^k$ options for $u$, where $q=\#(\calo/\vp)$. Including the identity coset (with $k=0$), we obtain
\[
\Phi(\vp^{(i,n-i)})=\sum_{k=0}^iq^k.\qedhere
\]
\end{proof}


\subsection{Regular semi-simple elements and stable conjugacy} Lemma \ref{Lem: centralizers} tells us that the only relatively regular semi-simple elements $x\in \End(V_2)$ we need to consider are those such that $H_x\cong U(1)\times U(1)$, since rational and stable conjugacy agree for the other regular semi-simple conjugacy classes. Hereafter, we use the notation $\sim_{st}$ to denote stable conjugacy. 

\begin{Lem}\label{Lem: right stabilizer}
Assume $\de\in \Herm(V_2)$ has a stabilizer isomorphic to $U(1)\times U(1)$. There are values $a\in F$ and $\mu,\lam\in F^\times$ with  $\mu\lam\in \Nm(E^\times)\setminus{(F^\times)^2}$ such that $\de$ is rationally conjugate to
\[
\left(\begin{array}{cc}a&\lam\zeta\\\mu\zeta&a\end{array}\right).
\]
\end{Lem}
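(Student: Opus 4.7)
My plan is to diagonalize $\delta$ over $F$ and then assemble an explicit hyperbolic basis for the split Hermitian form $J$ out of the eigenvectors. By Lemma \ref{Lem: centralizers}, the hypothesis $T_\delta\cong U(1)\times U(1)$ forces $|S_1|=2$, so $F[\delta]\cong F\times F$ and the characteristic polynomial of $\delta$ splits over $F$ with distinct roots $\alpha,\beta\in F$. I pick eigenvectors $v_1,v_2\in V_2$; the standard computation $\alpha\langle v_1,v_2\rangle=\langle\delta v_1,v_2\rangle=\langle v_1,\delta v_2\rangle=\beta\langle v_1,v_2\rangle$ shows $\langle v_1,v_2\rangle=0$. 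Writing $c_i=\langle v_i,v_i\rangle\in F^\times$, the Hermitian form in the basis $(v_1,v_2)$ is $\mathrm{diag}(c_1,c_2)$; comparing determinants with $J$ gives $c_1c_2\in\zeta^2\Nm(E^\times)$. Since $E/F$ is unramified of odd residue characteristic, $-1\in\Nm(E^\times)$ and $\zeta^2=-\Nm(\zeta)\in\Nm(E^\times)$, so $-c_1/c_2\in\Nm(E^\times)$, and I may fix $\gamma\in E^\times$ with $\gamma\bar\gamma=-c_1/c_2$.

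With $\gamma$ in hand I introduce the new basis
\[
e_1 = v_1+\gamma v_2,\qquad e_2 = \frac{\zeta}{2c_1}(v_1-\gamma v_2).
\]
A direct check gives $\langle e_i,e_i\rangle=0$ (using $c_1+\gamma\bar\gamma c_2=0$) and $\langle e_1,e_2\rangle=\zeta$, so the transition from the original basis to $(e_1,e_2)$ is realized by an element of $U(V_2)$. Inverting this change of basis and expanding $\delta e_1=\alpha v_1+\beta\gamma v_2$ and $\delta e_2=\frac{\zeta}{2c_1}(\alpha v_1-\beta\gamma v_2)$ in terms of $(e_1,e_2)$ yields $\delta e_1=ae_1+\mu\zeta e_2$ and $\delta e_2=\lambda\zeta e_1+ae_2$ with
\[
a=\frac{\alpha+\beta}{2}\in F,\quad \lambda=\frac{\alpha-\beta}{4c_1}\in F^\times,\quad \mu=\frac{(\alpha-\beta)c_1}{\zeta^2}\in F^\times.
\]
This realizes $\delta$ as $U(V_2)$-conjugate to the asserted matrix.

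It remains to verify $\lambda\mu\in\Nm(E^\times)\setminus(F^\times)^2$. Computing, $\lambda\mu=(\alpha-\beta)^2/(4\zeta^2)\in\zeta^{-2}(F^\times)^2=\zeta^2(F^\times)^2$, and $\zeta^2\in\Nm(E^\times)\setminus(F^\times)^2$ since $\zeta\notin F$. The only technical point is producing $\gamma$, which relies on the split hypothesis on $V_2$ (to pin down $c_1c_2$ modulo norms) and on the unramified hypothesis on $E/F$ (so that $-1$ and $\zeta^2$ are norms); everything else is explicit linear algebra.
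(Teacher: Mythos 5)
Your proof is correct, and it takes a genuinely different and more constructive route than the paper's. Both arguments begin by using Lemma \ref{Lem: centralizers} to deduce that $F[\delta]\cong F\times F$, so $\delta$ has distinct eigenvalues $\alpha,\beta\in F$. From there the paper's proof is indirect: it writes down the explicit candidates $\gamma$ (with $\mu=1$) and $\gamma'$ (with $\mu=\varpi$), checks they are stably conjugate to $\delta$ by comparing characteristic polynomials, shows $\gamma\not\sim\gamma'$ rationally by exhibiting the parity invariant $\val(\mu)\bmod 2$, and then invokes $|\calc(T_\delta/F)|=2$ to conclude $\delta\sim\gamma$ or $\delta\sim\gamma'$. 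Your proof instead builds the conjugating element of $U(V_2)$ explicitly, by taking orthogonal eigenvectors $v_1,v_2$, using that $V_2$ is split and $E/F$ is unramified to produce $\gamma\in E^\times$ with $\gamma\bar\gamma=-c_1/c_2$, and assembling an isotropic basis $e_1,e_2$; the formulas for $a,\lambda,\mu$ then drop out, and $\lambda\mu\equiv\zeta^2\pmod{(F^\times)^2}$ is checked directly, which agrees with the paper's $\mu\lambda=(b/\zeta)^2$ since $\alpha-\beta=2b$. What the paper's indirect argument buys that yours does not is the identification of exactly which two rational classes occur and the fact that they are distinguished by $\val(\mu)\bmod 2$; this is used immediately afterward to define the representatives $\delta_\pm$. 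Your argument proves exactly the stated lemma but not this extra normal-form information. One small sign point worth flagging: with the paper's conventions ($\langle\cdot,\cdot\rangle$ $E$-linear in the first slot, $X^\ast=J\bar X^T J^{-1}$) the pairing $\langle e_1,e_2\rangle$ comes out as $-\zeta$ rather than $\zeta$; this is harmless and fixed by swapping $e_1\leftrightarrow e_2$, which exchanges $\lambda\leftrightarrow\mu$ and leaves $\lambda\mu$ unchanged.
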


\begin{proof}

Note that if the centralizer of $\de$ in $U(V_2)$ is $U(1)\times U(1)$, then the centralizer of $\de$ in $\GL_2(E)$ is isomorphic to $E^\times\times E^\times$. Lemma \ref{Lem: centralizers} now implies that the eigenvalues of $\de$ lie in $F$, so there exist $a,b\in F$ such that
\[
\de\sim_{st}\left(\begin{array}{cc}a+b&\\&a-b\end{array}\right).
\]
Taking $a$ as above, $\mu=1$ and $\lam=(b/\zeta)^2$, and checking characteristic polynomials we see that $\de$ is stably conjugate 
\[
\ga=\left(\begin{array}{cc}a&\lam\zeta\\\zeta&a\end{array}\right).
\]
If $\de\sim \ga$, then we are done. Otherwise, consider
\[
\ga'=\left(\begin{array}{cc}a&\lam\vp^{-1}\zeta\\\vp\zeta&a\end{array}\right).
\]
The previous argument implies that $\de\sim_{st}\ga\sim_{st}\ga'$; we claim that $\ga$ and $\ga'$ are not rationally conjugate. This suffices to prove the lemma as there are only $2$ rational classes in the stable conjugacy class of $\de$ by Lemma \ref{Lem: centralizers}.

Let us consider the function
\[
\val\left(\begin{array}{cc}a&\lam\zeta\\\mu\zeta&a\end{array}\right)=\val(\mu).
\]
Note that $\val(\ga)\not\equiv\val(\ga')\pmod{2}.$ If $h= \left(\begin{array}{cc}x&y\\z&w\end{array}\right)\in U(V_2)$, 
\[
h\ga h^{-1}=\left(\begin{array}{cc}a+y\overline{w}-x\overline{z}\lam\zeta&x\overline{x}\lam\zeta-y\overline{y}\\w\overline{w}-z\overline{z}\lam\zeta&a-\overline{y}w+\overline{x}z\lam\zeta\end{array}\right).
\]
For $h\ga h^{-1}=\ga'$, we would need both
\[
y\overline{w}=x\overline{z}\lam\zeta\text{ and }\overline{y}w=\overline{x}z\lam\zeta.
\]
But this is impossible unless all terms are zero, in which case it is easy to check that
\[
\val(h\ga h^{-1})\equiv \val(\ga)\pmod{2}.
\]
It follows that $\ga$ and $\ga'$ are not rationally conjugate.
\end{proof}
\begin{Rem}\label{Rem: kostant section}
The representative
\[
\ga=\left(\begin{array}{cc}a&b^2\zeta^{-1}\\\zeta&a\end{array}\right)
\]
from the lemma lies in the rational points of a \emph{Kostant section} of the Chevalley invariant map
\[
\chi: \Herm(V_2)\lra \ft/W_T
\]
where $T\subset U(V_2)$ is the diagonal torus, $\ft=\Lie(T)$, and $W_T$ is the associated Weyl group. Our computation of the transfer factors below verifies that the transfer factor $\De$ agrees with Kottwitz's formulation in \cite{kottwitztransfer} in this case.
\end{Rem}

Thus for any regular element $\de\in\Herm(V_2)$, such that $T_\de\cong U(1)\times U(1),$ we may choose representatives of the two rational classes in the stable conjugacy class of $\de$ to be of the form
\[
\de_\pm=\left(\begin{array}{cc}a&\lam_\pm\zeta\\\mu_\pm\zeta&a\end{array}\right),
\]
where $\eta(\mu_\pm)=\pm1$.


\subsection{Orbital integrals}
  We begin with a simple lemma.
\begin{Lem}\label{Lem: simple orbital integrals}
Let $\de=\left(\begin{array}{cc}a&\lam\zeta\\\mu\zeta&a\end{array}\right)$ be as above, and denote by $X=X_{\mu,\lam}=\de-aI_2$ the off-diagonal matrix. Assume $\mu\lam\in\Nm(E^\times)\setminus{(F^\times)^2}$ and set $\val(\mu\lam)=2m$. Then
\[
\Orb(X_{\mu,\lam}, \bfun_{\End(\Lam)}) =\begin{cases}\qquad \displaystyle\sum_{k=0}^mq^k&: \quad \eta(\mu)=1\\ \qquad\displaystyle\sum_{k=0}^{m-1}q^k&: \quad \eta(\mu)=-1\end{cases}.
\]
\end{Lem}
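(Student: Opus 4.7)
The plan is to leverage the compactness of $T_{X_{\mu,\lam}} \cong U(1) \times U(1)$ and the Iwasawa decomposition to reduce the orbital integral to a coset count in $U(V_2)/U(\Lam)$, which I can then parametrize exactly as in the proof of Proposition \ref{Lem: standard pushforward}.

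First I will observe that since $T_{X_{\mu,\lam}}$ is compact of volume one and the integrand $\bfun_{\End(\Lam)}(g^{-1}X_{\mu,\lam}g)$ is right $U(\Lam)$-invariant with $\vol(U(\Lam)) = 1$, the orbital integral collapses to
\[
\Orb(X_{\mu,\lam}, \bfun_{\End(\Lam)}) = \#\{gU(\Lam) \in U(V_2)/U(\Lam) : g^{-1}X_{\mu,\lam}g \in \End(\Lam)\}.
\]
Parametrizing cosets via Iwasawa, I take representatives $g = n_u t_{\vp^k}$ with $k \in \zz$ and $u \in F/\vp^{2k}\calo$, and a direct matrix multiplication yields
\[
g^{-1} X_{\mu,\lam} g = \zeta \begin{pmatrix} -u\mu & \vp^{-2k}(-u^2\mu + \lam) \\ \vp^{2k}\mu & u\mu \end{pmatrix}.
\]
Membership in $\Mat_2(\calo_E)$ thus reduces to three conditions on $(k,u)$: $u\mu \in \calo$, $\vp^{2k}\mu \in \calo$, and the essential inequality $\val(-u^2\mu + \lam) \geq 2k$.

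Next I will normalize within the rational orbit. A direct check shows that conjugation by $\diag(a, \bar{a}^{-1}) \in U(V_2)$ rescales $(\mu,\lam) \mapsto (\mu/\Nm(a), \lam\Nm(a))$, and since $\Nm(E^\times) = \vp^{2\zz}\calo^\times$ in the unramified setting, this lets me assume $\mu = 1,\ \lam = \vp^{2m}c$ when $\eta(\mu)=+1$, and $\mu = \vp,\ \lam = \vp^{2m-1}c$ when $\eta(\mu)=-1$. The hypothesis $\mu\lam \notin (F^\times)^2$ forces $c$ to reduce to a non-square in the residue field.

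The key technical input is the valuation identity
\[
\val(-u^2 + \vp^{2j}c) = \min(2\val(u),\, 2j),
\]
which holds precisely because of the non-square assumption: when $\val(u) = j$, writing $u = \vp^j u_0$ gives $\vp^{2j}(c - u_0^2)$, and $c - u_0^2 \not\equiv 0 \pmod{\vp}$ since a non-square cannot equal a square in the residue field. With this identity, condition (iii) simplifies to $\val(u) \geq k$ together with $k \leq m$ in the first case, and $k \leq m - 1$ in the second (the shift coming from the parity observation that an even integer is $\geq 2k - 1$ if and only if it is $\geq 2k$). Each admissible $k$ contributes $|\vp^k\calo/\vp^{2k}\calo| = q^k$ valid cosets, and summing yields the stated formulas. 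The main obstacle is establishing this valuation identity and tracking the admissible range of $k$; everything else is linear algebra.
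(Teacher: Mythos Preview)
Your proposal is correct and follows essentially the same route as the paper: reduce via Iwasawa to a coset sum over $B/(B\cap U(\Lam))$, normalize $\mu$ within the rational orbit using the diagonal torus, compute the conjugate explicitly, and invoke the non-square hypothesis to obtain the key valuation identity $\val(\lam-\mu u^2)=2\min\{m,\val(u)\}$, from which the count $\sum q^k$ drops out. The only cosmetic difference is your choice of normalization in the $\eta(\mu)=-1$ case (you take $\mu=\vp$, the paper takes $\val(\mu)=-1$); both are valid and yield the same range $0\le k\le m-1$.
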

\begin{proof}
We first consider the case that $\eta(\mu)=1$. Since $E/F$ is unramified, this restriction implies that $\val(\mu)=n$ is even. We may assume that $n=0$, since this does not change the conjugacy class of $\de$.

As above, the Iwasawa decomposition on $U(V_2)$ implies
\begin{align*}
\Orb(X_{\mu,\lam}, \bfun_{\End(\Lam)})&= \displaystyle \int_{U(V_2)} \bfun_{\End(\Lam)}(hX_{\mu,\lam}h^{-1})dh\\
								&=\sum_{h\in B\cap U(\Lam)\backslash B}\bfun_{\End(\Lam)}(h X_{\mu,\lam}h^{-1}).
\end{align*}
A set of representatives of the quotient $B/B\cap U(\Lam)$ is given by 
\[
h=\left(\begin{array}{cc}\vp^{-k}&\\&\vp^{k}\end{array}\right)\left(\begin{array}{cc}1&u\\&1\end{array}\right),
\]
with $k\in \zz$ and $u\in F/\vp^{2k}\calo_F$. Thus, we need to count $k$ and $u$ such that
\[
hX_{\mu,\lam}h^{-1} = \zeta\left(\begin{array}{cc}\mu u &\frac{\lam-\mu u^2}{\vp^{2k}}\\\mu \vp^{2k}&-\mu u\end{array}\right)
\]
is integral. This forces the inequalities 
\[
\val(u)\geq0,\quad \min\{m,\val(u)\}\geq k\geq0.
\]
We have used the fact that $\mu\lam\notin(F^\times)^2$ in identifying 
\[
\val(\lam-\mu u^2)=2 \min\{m,\val(u)\}.
\]From this the result follows easily in this case.

Now if we assume that $\eta(\mu)=-1$, then necessarily $\val(\mu)$ is odd, and we are free (up to conjugation) to assume $\val(\mu)=-1$. The result now follows from a similar argument as above.

\end{proof}

We now compute the orbital integrals $\Orb(\ga, \Phi_n)$.  Considering only regular elements with centralizer $T_\de\cong U(1)\times U(1)$, Lemma \ref{Lem: right stabilizer} implies we need only consider elements of the form
\[
\de_\pm=\left(\begin{array}{cc}a&\lam_\pm\zeta\\\mu_\pm\zeta&a\end{array}\right)\sim_{st}\left(\begin{array}{cc}a+b&\\&a-b\end{array}\right),
\]
where $\eta(\mu_\pm)=\pm1$. Then $\{\de_+,\de_-\}$ are representatives of the two conjugacy classes in the stable conjugacy class. 
The character $\ka$ constructed in Lemma \ref{Lem: endo character} gives our character 
\[
\kappa(\inv(\de_+,\de_\pm)) = \eta(\mu_\pm).
\]

\begin{Prop}\label{Prop: orbital computation}
Set $n_1=\val(b)$. Then 
\begin{equation}\label{eqn:kappa computation}
\Orb^\kappa(\de_+,\Phi_n)=\begin{cases}\quad q^{n_2}&: \text{  if  } \val(a+b),\val(a-b)\equiv 0\pod{2},\\\quad0&: \qquad\qquad\text{  otherwise.  }\end{cases}
\end{equation}
\end{Prop}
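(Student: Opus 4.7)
The plan is to compute the orbital integrals directly, paralleling the Iwasawa-decomposition argument in Lemma \ref{Lem: simple orbital integrals}, and to perform the case analysis that makes the $\kappa$-twist cancel except in the expected situation.

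First, since $\kappa(\inv(\de_+,\de_\pm)) = \eta(\mu_\pm) = \pm 1$, I would unfold
\[
\Orb^\kappa(\de_+,\Phi_n) = \Orb(\de_+,\Phi_n) - \Orb(\de_-,\Phi_n),
\]
and then substitute the decomposition $\Phi_n = \sum_{k=0}^{n/2} q^k \bfun_{\Herm(\Lam)_{n,k}}$ from Proposition \ref{Lem: standard pushforward}. This reduces the computation to evaluating each orbital integral $\Orb(\de_\pm, \bfun_{\Herm(\Lam)_{n,k}})$, taking the alternating sum over $\pm$, and multiplying by $q^k$.

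Next, I would apply the Iwasawa decomposition $U(V_2)=B\cdot U(\Lam)$. Using the representatives $h = \diag(\vp^{-k'},\vp^{k'})\left(\begin{array}{cc}1 & u \\ & 1\end{array}\right)$ exactly as in the proof of Lemma \ref{Lem: simple orbital integrals}, a direct computation gives
\[
h\de_\pm h^{-1} = aI_2 + \zeta\left(\begin{array}{cc} \mu_\pm u & (\lam_\pm - \mu_\pm u^2)\vp^{-2k'} \\ \mu_\pm\vp^{2k'} & -\mu_\pm u\end{array}\right).
\]
Since $E/F$ is unramified, $\{1,\zeta\}$ is an $\calo_F$-basis of $\calo_E$, so the condition $h\de_\pm h^{-1}\in \vp^k \End(\Lam)$ decomposes into the four divisibility constraints
\[
a\in \vp^k\calo_F,\quad \mu_\pm u\in \vp^k\calo_F,\quad \mu_\pm\vp^{2k'}\in\vp^k\calo_F,\quad \lam_\pm - \mu_\pm u^2\in \vp^{k+2k'}\calo_F.
\]
The determinant $\det(h\de_\pm h^{-1}) = a^2 - \zeta^2\mu_\pm\lam_\pm = a^2-b^2$ is an invariant of the stable orbit, so the condition $\val(a^2-b^2) = n$ is automatic and encodes the chosen component of the support of $\Phi_n$.

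Having reduced each orbital integral to a count of pairs $(k',u)$ satisfying these inequalities, I would then take the alternating sum in $\pm$, keeping track of the fact that $\val(\mu_+)$ is even and $\val(\mu_-)$ is odd (this is the precise mechanism by which $\kappa$ acts here). I expect the following dichotomy, which would complete the proof: if either $\val(a+b)$ or $\val(a-b)$ is odd, then parity considerations on $\val(\mu_\pm)$ force the two lattice-point counts to coincide for every $k$, so $\Orb^\kappa(\de_+,\Phi_n) = 0$; if both $\val(a\pm b)$ are even, the counts differ in a controlled way, and the weighted geometric sum $\sum_{k=0}^{n/2} q^k\cdot(\text{count})$ telescopes to the single power $q^{n_2}$.

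The principal obstacle is the combinatorial bookkeeping in the last step: one must carefully track how the opposite parities of $\val(\mu_\pm)$ interact with the divisibility constraints as $k$ varies, verify that the constraint $\lam_\pm - \mu_\pm u^2 \in \vp^{k+2k'}\calo_F$ behaves as in Lemma \ref{Lem: simple orbital integrals} (where one uses $\mu\lam\notin (F^\times)^2$ to evaluate $\val(\lam-\mu u^2)$), and check that the resulting partial sums assemble into the single clean power $q^{n_2}$. The whole argument is an enriched version of the counting carried out in Lemma \ref{Lem: simple orbital integrals}, now carrying both the central translate $aI_2$ and the extra scaling parameter $k$ coming from the expansion of $\Phi_n$.
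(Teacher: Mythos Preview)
Your approach is essentially the paper's: both unfold $\Orb^\kappa=\Orb(\de_+,\Phi_n)-\Orb(\de_-,\Phi_n)$, expand $\Phi_n=\sum_k q^k\bfun_{\Herm(\Lam)_{n,k}}$, and reduce to an Iwasawa count.  The paper's one organizational refinement is to write $\de_\pm=aI_2+X_{\mu_\pm,\lam_\pm}$ and observe that, whenever $k\le \val(a)$, the condition $h\de_\pm h^{-1}\in\vp^k\End(\Lam)$ is equivalent to $hX_{\mu_\pm,\lam_\pm}h^{-1}\in\vp^k\End(\Lam)$; this lets one invoke Lemma~\ref{Lem: simple orbital integrals} for $\vp^{-k}X_{\mu_\pm,\lam_\pm}$ directly rather than redoing the count from scratch, and then split into the three cases $\val(a)\gtrless\val(b)$ and $\val(a)=\val(b)$ (in the last case one must also check that the terms with $k>\val(a)$ vanish).

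One expectation in your sketch is slightly off and worth correcting before you carry out the bookkeeping: it is \emph{not} true that in the ``odd'' case the $\de_+$ and $\de_-$ counts coincide for every fixed $k$.  What actually happens (in, say, the case $\val(a)>\val(b)=n_2$) is that the $k$-th difference equals $(-1)^k q^{n_2}$, so the signs alternate with $k$ and the total $\sum_{k=0}^{n_2}(-1)^k q^{n_2}$ vanishes precisely when $n_2$ is odd and equals $q^{n_2}$ when $n_2$ is even.  The telescoping you anticipate is therefore an alternating-sign cancellation, not a termwise one.
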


 \begin{proof} We explicitly compute the individual orbital integrals and then take the appropriate weighted sums. There are three cases to consider. For convenience, we record the results of these computations here. Set $\val(a)=n_1$, $\val(b)=n_2$, and $\val(\det(\de_\ast))=n\geq\min\{2n_1,2n_2\}$, where $\ast=\pm$.
\begin{enumerate}
\item\label{case1}
 If $n_1>n_2$, then $n_2=n/2\geq0$ and we have
\[
\Orb(\de_\ast, \Phi_n) = \sum_{k=0}^{n_2}\sum_{j=k}^{n_2}q^j-\begin{cases}\left[\frac{1+n_2}{2}\right]q^{n_2}: \text{  if  } \ast =+,\\\left[\frac{2+n_2}{2}\right]q^{n_2}: \text{  if  } \ast =-.\end{cases}
\]

\item\label{case2}
If $n_1<n_2$, then $n_1=n/2>0$ and we have
\[
\Orb(\de_\ast, \Phi_n) = \sum_{k=0}^{n_1}\sum_{j=k}^{n_2}q^j-\begin{cases}\left[\frac{1+n_1}{2}\right]q^{n_2}&: \text{  if  } \ast =+,\\\left[\frac{2+n_1}{2}\right]q^{n_2}&: \text{  if  } \ast =-.\end{cases}
\]

\item\label{case3}
If $n_1=n_2$, then $\val(\det(\de_\ast))=n\geq 2n_1$. If it is odd, then $\Orb(\de_\ast,\Phi)=0$. Otherwise,
\[
\Orb(\de_\ast, \Phi_n) = \sum_{k=0}^{n_2}\sum_{j=k}^{n_2}q^j-\begin{cases}\left[\frac{1+n_2}{2}\right]q^{n_2}&: \text{  if  } \ast =+,\\\left[\frac{2+n_2}{2}\right]q^{n_2}&: \text{  if  } \ast =-.\end{cases}
\]

\end{enumerate}

\begin{Rem}
The final case $n_1=n_2$ contains the \emph{nearly singular} case studied in \cite{polak2015exposing}. In that work, the author only considers elements $x$ with centralizer $E^\times\times E^\times$ in $U(4)$, which forces the eigenvalues of $r(x)$ to be norms. These have even valuation and we compute $n_2=2\val(x-y)=2V_m$, in his notation. 
\end{Rem}
In cases (\ref{case1}) and (\ref{case3}), we obtain
\[
\Orb^\kappa(\de_+,\Phi_n)=\Orb(\de_+, \Phi_n) -\Orb(\de_-, \Phi_n) =\begin{cases}\quad q^{n_2}&: \text{  if  } n_2\equiv 0\pod{2},\\\quad 0&: \text{  if  }  n_2\equiv 1\pod{2}.\end{cases}
\]
This is clearly equivalent to the statement of the proposition in case (\ref{case1}); for case (\ref{case3}), note that at most one of the eigenvalues $a\pm b$ has valuation greater than $n_2$. Without loss of generality, we may assume that $\val(a+b)=n_2$ and $\val(a-b) =k$. Then
\[
n_2+k=\val(a^2-b^2) = n,
\]
and $n$ being even implies $n_2\equiv k\pod{2}$. 
 
 Finally, case (\ref{case2}) becomes
\[
\Orb^\kappa(\de_+,\Phi_n)=\begin{cases}\quad q^{n_2}&: \text{  if  } n_1\equiv 0\pod{2},\\\quad 0&: \text{  if  }  n_1\equiv 1\pod{2},\end{cases}
\]
which immediately implies the claim.

We now prove these formulas. First assume that $n_1>n_2$. Then for any for $h\in U(V_2)$, for any $0\leq k\leq n/2$,
\[
h\de_\ast h^{-1}\in \vp^k\End(\Lam)_{\val(\det)=n-2k} \iff hX_{\mu,\lam}h^{-1}\in \vp^k\End(\Lam)_{\val(\det)=n-2k}.
\]
Indeed, since $\val(\det(\de_\ast))=2n_2=n$ and similarly for $X_{\mu,\lam}$, the only requirement is that the entries lie in $\vp^k\calo$. This holds for $aI_2$ by assumption, so that it holds for the entries of $h\de h^{-1}$ if and only if it holds for the entries of $hX_{\mu,\lam}h^{-1}$.

Using this and our computation of $\Phi_n$, we have
\begin{align*}
\Orb(\de_\ast, \Phi_n)&= \sum_{k=0}^{n/2}q^k\Orb\left(\de_\ast, \bfun_{\vp^k\End(\Lam)_{\val(\det)=n-2k}}\right)\\
	&= \sum_{k=0}^{n/2}q^k\Orb\left(X_{\mu,\lam}, \bfun_{\vp^k\End(\Lam)_{\val(\det)=n-2k}}\right)\\
&= \sum_{k=0}^{n/2}q^k\Orb\left(\vp^{-k}X_{\mu,\lam},\bfun_{\End(\Lam)_{\val(\det)=n-2k}}\right)\\
&= \sum_{k=0}^{n/2}q^k\Orb\left(\vp^{-k}X_{\mu,\lam},\bfun_{\End(\Lam)}\right).
\end{align*}
This last reduction follows since $\vp^{-k}X_{\mu,\lam}$ has the correct determinant, so that the orbital integrals over the test functions $\bfun_{\End(\Lam)_{\val(\det)=n-2k}}$ and $\bfun_{\End(\Lam)}$ agree.
Thus, we are reduced to computing the orbital integral
\[
\Orb\left(\vp^{-k}X_{\mu,\lam},\bfun_{\End(\Lam)}\right).
\]
Now let $\de_\ast=\de_+$. By Lemma \ref{Lem: simple orbital integrals},
\[
\Orb\left(\vp^{-k}X_{\mu,\lam},\bfun_{\End(\Lam)}\right)=\begin{cases}\displaystyle\sum_{j=0}^{(n-2k)/2}q^j :\quad k\text{ even,}\\\displaystyle\sum_{j=0}^{(n-2k)/2-1}q^j :\quad k\text{ odd,}\end{cases}
\]
so that 
\[
\Orb(\de_+, \Phi_n)=\sum_{k=0}^{n/2}\sum_{j=k}^{n/2}q^j-\left[\frac{1+(n/2)}{2}\right]q^{n/2}.
\]
The computation is similar for $\de_\ast=\de_-$, and we find
\[
\Orb(\de_-, \Phi_n)=\sum_{k=0}^{n/2}\sum_{j=k}^{n/2}q^j-\left[\frac{2+(n/2)}{2}\right]q^{n/2}.
\]

In the case that $n_1<n_2$, there is a similar reduction. Indeed, since the valuation is correct, $h\de_\ast h^{-1}\in \vp^k\End(\Lam)_{\val(\det)=n-2k}$ if and only if $h X_{\mu,\lam} h^{-1}\in \vp^k\End(\Lam)$. 
Writing
\[
h\vp^{-k}\de_\ast h^{-1}= \vp^{-k}aI_2+ h \vp^{-k}X_{\mu,\lam} h^{-1}.
\]
Since $k\leq n_1<n_2$, it is clear that integrality of the left-hand side is equivalent to the integrality of $h\vp^{-k}X_{\mu,\lam} h^{-1}$. Therefore, we consider the orbital integral
\begin{align*}
\Orb(\de_\ast, \Phi_n)&= \sum_{k=0}^{n_1}q^k\Orb\left(\vp^{-k}X_{\mu,\lam}, \bfun_{\End(\Lam)}\right),
\end{align*}
which is computed as above.

A similar argument  works in the case that $n_1=n_2$, provided $k\leq n_1$. In general,
\begin{align*}
\Orb(\de_\ast, \Phi_n)&= \sum_{k=0}^{n_1}q^k\Orb\left(\vp^{-k}X_{\mu,\lam}, \bfun_{\End(\Lam)}\right)\\&+\sum_{k=n_1+1}^{n/2}q^k\Orb\left(\de_\ast, \bfun_{\vp^k\End(\Lam)_{\val(\det)=n-2k}}\right).
\end{align*}
The first set of integrals are computed as above. Consider now the case that $n_1<k\leq n/2$. Considering the sum
\[
h\vp^{-k}\de_\ast h^{-1} = \vp^{-k}aI_2+ h \vp^{-k}X_{\mu,\lam}h^{-1},
\]
 since $\vp^{-k}a\notin \calo_E$, we find that $h\vp^{-k}\de_\ast h^{-1}\in \End(\Lam)$ if and only if $h \vp^{-k}X_{\mu,\lam} h^{-1}\in \End(\Lam)-\vp^{-k}aI_2$. In particular, the lack of integrality of $h\vp^{-k}X_{\mu,\lam}h^{-1}$ is precisely canceled by the central term.

 We show this such a cancellation is not possible. Indeed, writing $h=kb$ for $k\in \GL(\Lam)$ and $b\in B$, then $h\vp^{-k}\de_\ast h^{-1}\in \End(\Lam)$ if and only if $b\vp^{-k}\de_\ast b^{-1}\in \End(\Lam)$, so that we may reduce to elements in the Borel subgroup as before. As previously noted, we may assume that our representatives are of the form
\[
h=\left(\begin{array}{cc}\vp^{-m}&\\&\vp^{m}\end{array}\right)\left(\begin{array}{cc}1&u\\&1\end{array}\right),
\]
with $m\in \zz$ and $u\in F$. Thus, we have
\[
bX_{\mu,\lam}b^{-1} = \left(\begin{array}{cc}\mu u\zeta &\frac{\lam\zeta-\mu u^2\zeta}{\vp^{2m}}\\\mu \vp^{2m}\zeta&-\mu u\zeta\end{array}\right).
\]
But $u\mu\in F$ so that it is not possible for $a+u\mu\zeta\in \vp^{k}\calo_E$ when $a\notin\vp^{k}\calo$. It follows that the orbital integrals $\Orb\left(\de_\ast, \bfun_{\vp^k\End(\Lam)_{\val(\det)=n-2k}}\right)$ vanish for $k>n_1=\val(a)$.
\end{proof}

\subsection{The endoscopic side}
Let $\de_+\in \Herm(V_2)^{rss}$ be as in the previous section. Up to stable conjugacy,
\[
\de_+\sim_{st}\left(\begin{array}{cc}a+b&\\&a-b\end{array}\right),
\]
and we send $\de_+\to (a+b,a-b)\in \Herm(V_1)\oplus \Herm(V_1)\cong F^2$. Recall that the split Hermitian form on $V_1= Ev$ is given so that $\la v,v\ra\in \Nm(E)$.

The relative orbital integrals for this action are trivial: in the case of a single copy of $(U(V_1)\times U(V_1),\fu(V_1\oplus V_1)_1)$, the contraction map $r:\End(V_1)\to \Herm(V_1)$ corresponds to the field norm $\Nm_{E/F}:E\to F$. Moreover, the action of $U(V_1)\times U(V_1)$ on $E$ is given by
\[
(g,g')\cdot e = ge\overline{g'},
\]
so that the contraction map is invariant with respect to both copies of $U(V_1)$ and takes the $E^\times\times U(V_1)$ action to 
\[
(g,h)\cdot e\overline{e} = ge\overline{e}\overline{g} = \Nm(g)\Nm(e).
\]
For any smooth integrable function $\phi$ on $\fu(V_1\oplus V_1)_1\cong E$ that is $\calo^\times_E$-invariant and any $x\in E^\times$,
\begin{align*}
\RO(x,\phi):&=\int_{U(V_1)\times U(V_1)}\phi(gx\overline{g'})dgdg'\\& = \int_{U(W_1)}r_\ast\phi(\Nm(g)\Nm(x))dg=r_\ast\phi(\Nm(x))=\phi(x).
\end{align*}
We introduce the function $\Phi^{\ka}:\Herm(V_1)\times\Herm(V_1)\to \cc$
\begin{equation*}
\Phi^\kappa(x,y) = \begin{cases} \qquad 1 \qquad: \val(x)\equiv\val(y)\equiv0\pmod{2},\\ \quad\quad 0\qquad: \text{otherwise}.\end{cases}
\end{equation*}
Letting $\bfun_{\End(\Lam_1)}\otimes \bfun_{\End(\Lam_1)}$ denote the basic function for the endoscopic symmetric space, it is easy to check that
\[
\Phi^\ka=r_{(\al_0,\be_0),!}\left(\bfun_{\End(\Lam_1)}\otimes \bfun_{\End(\Lam_1)}\right).
\]
\begin{Prop}\label{Prop: endoscopic side} 
 For $\de_+$ as in Proposition \ref{Prop: orbital computation}, we have
\begin{equation*}
\SO((a+b,a-b),\Phi^{\kappa})=\Delta((a+b,a-b), \de_+)\Orb^\ka(\de_+,\Phi).
\end{equation*}
\end{Prop}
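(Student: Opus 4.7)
The plan is to verify the identity by explicitly computing both sides, using the formula for $\Orb^\ka(\de_+,\Phi)$ established in Proposition \ref{Prop: orbital computation} together with a direct calculation of the transfer factor.

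First, the left-hand side reduces immediately. The action of $U(V_1)\cong U(1)$ on $\Herm(V_1)\cong F$ is trivial, since any $g\in U(1)\subset\calo_E^\times$ acts as $c\mapsto g\bar{g}c=c$. Hence each $U(1)\times U(1)$-orbit on $\Herm(V_1)\oplus\Herm(V_1)$ is a single point equal to its stable orbit, so
\[
\SO((a+b,a-b),\Phi^\ka)=\Phi^\ka(a+b,a-b).
\]

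Second, I compute the transfer factor $\De((a+b,a-b),\de_+)=\ka(\inv(\de_+,\de^{\mathrm{nice}}))\,\omega_{E/F}(D(\de_+))\,|D(\de_+)|_F$, where $\de^{\mathrm{nice}}=\phi_{1,1}(a+b,a-b)$. Since $D(\de_+)=(a+b)-(a-b)=2b$ and $|2|_F=1$, the discriminant contribution is $(-1)^{\val(b)}q^{-\val(b)}$. To identify the cohomological contribution, I choose the basis $\{g_1,g_2\}=\{e_1\pm\zeta^{-1}e_2\}$ of $V_2$, in which the form $J$ becomes $\mathrm{diag}(-2,2)$ and $V_2\cong V_1\oplus V_1$ orthogonally. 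Transporting $\mathrm{diag}(a+b,a-b)$ from this basis back to $\{e_1,e_2\}$ gives
\[
\de^{\mathrm{nice}}=\begin{pmatrix}a & b\zeta\\ b\zeta^{-1}& a\end{pmatrix},
\]
which is of the form in Lemma \ref{Lem: right stabilizer} with $\mu=b\zeta^{-2}=-b/\Nm(\zeta)$. Using that $\eta(-1)=\eta(\Nm(\zeta))=1$ in the unramified setting, I find $\eta(\mu)=(-1)^{\val(b)}$, so $\de^{\mathrm{nice}}$ is rationally conjugate to $\de_+$ when $\val(b)$ is even and to $\de_-$ when $\val(b)$ is odd. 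Therefore $\ka(\inv(\de_+,\de^{\mathrm{nice}}))=(-1)^{\val(b)}$, and the two sign factors cancel:
\[
\De((a+b,a-b),\de_+)=q^{-\val(b)}.
\]

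Finally, Proposition \ref{Prop: orbital computation} gives $\Orb^\ka(\de_+,\Phi)=q^{\val(b)}$ precisely when both $\val(a+b),\val(a-b)$ are even, and $0$ otherwise, which matches the support of $\Phi^\ka(a+b,a-b)$. On this locus $\De\cdot\Orb^\ka=q^{-\val(b)}\cdot q^{\val(b)}=1=\Phi^\ka(a+b,a-b)$, while both sides vanish off it. The main obstacle is the sign bookkeeping in the middle step: correctly identifying the rational orbit of $\de^{\mathrm{nice}}$ via the explicit change of basis, and verifying that the sign $(-1)^{\val(b)}$ coming from $\ka(\inv)$ exactly cancels the sign from $\omega_{E/F}(D)$. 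Everything else is bookkeeping via Proposition \ref{Prop: orbital computation}.
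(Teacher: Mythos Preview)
Your proposal is correct and follows essentially the same approach as the paper: both compute the left-hand side directly, determine the transfer factor to be $q^{-\val(b)}$, and then invoke Proposition \ref{Prop: orbital computation}. The only difference is presentational---the paper asserts that the matching $\de_+\mapsto(a+b,a-b)$ is nice precisely when the eigenlines of $\de_+$ carry split Hermitian forms (equivalently $\val(b)$ even) and then uses the sign flip for odd $\val(b)$, whereas you make this explicit by exhibiting the basis $\{e_1\pm\zeta^{-1}e_2\}$ and computing $\de^{\mathrm{nice}}$ directly; both routes yield the same cancellation of the sign $(-1)^{\val(b)}$.
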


\begin{proof} Our previous remarks allow us to compute the left-hand side:
\[
\SO((x,y),\Phi^{\kappa})=\begin{cases} \qquad 1 \qquad: \val(x),\val(y)\equiv0\pmod{2},\\ \quad\quad 0\qquad: \text{otherwise}.\end{cases}
\]

For the right-hand side, some care must be taken with the transfer factor. When the matching $\de_+\mapsto (a+b,a-b)$ is a nice matching in the sense of Section \ref{Section: unitary endoscopy}, the transfer factor (\ref{nice matching}) may be computed as 
\[
\De((a+b,a-b),\de_+)= (-q)^{-\val(b)}=(-1)^{n_2}q^{-n_2},
\]
using the notation from Proposition \ref{Prop: orbital computation}. This matching is nice if and only if the restriction of the Hermitian form of $V_2$ to each of the two eigenlines $V_2=L_1\oplus L_2$ of $\de_+$ corresponds to a split Hermitian form. A simple computation shows that this is the case if and only if $n_2=\val(b)$ is even. 
When $n_2$ is odd, then $\de_-$ is a nice match with $(a+b,a-b)$ so that
\[
\De((a+b,a-b),\de_+)=-\De((a+b,a-b),\de_-)= -(-1)^{n_2}q^{-n_2}=q^{-n_2}.
\]
By Proposition \ref{Prop: orbital computation}, we see that $\Orb^\ka(\de_+,\Phi)$ vanishes unless both eigenvalues $a+b$ and $a-b$ are norms. Comparing with (\ref{eqn:kappa computation}), we obtain the desired identity.

\end{proof}

\newpage
\begin{appendix}
\section{Comment on Torsors} In this appendix, we record some elementary properties of functions on torsors. The proofs are standard exercises which we omit.

Let $\G$ be an affine algebraic group over a field $F$. There is a well-known correspondence
\[
\{\text{$F$-torsors of $\G$}\}/\sim\:\longleftrightarrow H^1(F,\G):=H^1_{cont}(\Gal(F^{sep}/F),\G(F^{sep})),
\]
where $\G(F^{sep})$ is endowed with the discrete topology. Let $f:\X\to \Y$ be a $\G$-torsor and let $[\al]\in H^1(F,G)$. As explained in \cite[Chapt. 1, \S 5]{SerreGalois}, we may twist $\X$ by $\al$ to obtain another $\G$-torsor $f_\al: \X_\al\to \Y$. Setting $X_\al=\X_\al(F)$ and $Y=\Y(F)$, the following result is standard.
\begin{Prop}\label{Thm: torsor rational}
Let $f:\X\to \Y$ be a $\G$-torsor. Then
\[
Y=\bigsqcup_{[\al]\in H^1(F,\G)}f_\al(X_\al).
\]
This is true even if $X_\al=\emptyset$ for some $\al\in H^1(F,\G)$.
\end{Prop}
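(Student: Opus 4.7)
The plan is to compute, for each $y \in Y$, the fiber $\X_y := f^{-1}(y)$ as a $\G$-torsor over $\Spec(F)$ and use the cohomology-torsor correspondence recalled just before the proposition to determine precisely which twist $\X_\al$ admits a rational point above $y$.

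First, I would observe that since $f: \X \to \Y$ is a $\G$-torsor in the \'etale (or fppf) topology, for every $y \in Y = \Y(F)$ the scheme-theoretic fiber $\X_y := \X \times_{\Y} \Spec(F)$ is itself a $\G$-torsor over $\Spec(F)$. Under the correspondence, $\X_y$ determines a class $[\al_y] \in H^1(F,\G)$, and $\X_y(F) \neq \emptyset$ exactly when $[\al_y]$ is the distinguished element. In particular, $y$ lies in $f(X)$ if and only if $[\al_y] = 1$.

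Next, for each $[\al] \in H^1(F,\G)$, I would identify the fiber of $f_\al: \X_\al \to \Y$ above $y$. Serre's twisting construction (\cite[Chapt.~1, \S 5]{SerreGalois}) is functorial and commutes with pullback along $\Spec(F) \to \Y$ determined by $y$, so there is a natural identification $(\X_\al)_y \simeq (\X_y)_\al$ of $\G$-torsors over $F$. Under the correspondence between torsors and $H^1$, twisting $\X_y$ by a cocycle representing $\al$ sends the class $[\al_y]$ to the class obtained by changing the basepoint of the pointed set $H^1(F,\G)$: concretely, $(\X_y)_\al$ has an $F$-point if and only if $[\al_y] = [\al]$. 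Consequently, $y \in f_\al(X_\al)$ if and only if $[\al] = [\al_y]$.

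Since $[\al_y]$ is uniquely determined by $y$, this shows that each $y \in Y$ lies in the image of $f_\al$ for exactly one $[\al] \in H^1(F,\G)$, namely the invariant $[\al_y]$. This simultaneously establishes disjointness and exhaustion. The main technical point -- and the only real step -- is the compatibility $(\X_\al)_y \simeq (\X_y)_\al$, which is a routine unwinding of the twisting construction and works equally well in the non-abelian setting. The possibility that $X_\al$ is empty for some $[\al]$ requires no special treatment: it simply reflects the absence of any $y \in Y$ with invariant equal to $[\al]$, and does not affect the disjoint-union decomposition.
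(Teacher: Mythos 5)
Your proof is correct, and it is the standard argument for this statement. The paper itself omits the proof, remarking only that the results of the appendix "are standard exercises"; your argument---classifying each $y \in Y$ by the cohomology class $[\al_y]$ of the fiber torsor $\X_y$, then using the compatibility $(\X_\al)_y \simeq (\X_y)_\al$ of twisting with base change and the change-of-basepoint bijection $H^1(F,{}_\al\G) \simeq H^1(F,\G)$ to conclude that $y \in f_\al(X_\al)$ precisely when $[\al] = [\al_y]$---is exactly the standard exercise they have in mind, and it cleanly yields both disjointness and exhaustion at once.

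One small point worth keeping in mind when you write this up carefully: after twisting by $\al$, the space $\X_\al$ is naturally a torsor for the \emph{inner form} ${}_\al\G$ rather than for $\G$ itself, so the precise sense in which ``$(\X_y)_\al$ has an $F$-point iff $[\al_y]=[\al]$'' is that the class of $(\X_y)_\al$ in $H^1(F,{}_\al\G)$ is trivial iff $[\al_y]=[\al]$ in $H^1(F,\G)$, via Serre's bijection $H^1(F,{}_\al\G) \xrightarrow{\sim} H^1(F,\G)$ that sends the distinguished point to $[\al]$. You have the right conclusion; just make the pointed-set bookkeeping explicit so the ``change of basepoint'' phrase is not doing hidden work.
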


Suppose that $\X=\mathbf{V}$ is a linear representation of $\G$. In this case, Hilbert's theorem 90 implies that we for any $[\al]\in H^1(F,G)$, 
\[
\mathbf{V}_\al\cong \mathbf{V}
\]
 and that these isomorphisms may be chosen compatibly so that each pure inner form $\G_\al$ acts on the variety $\mathbf{V}$. This is discussed in greater detail in \cite{bhargava2015arithmetic}, for example.
 \begin{Cor}\label{Cor: torsor rational}Suppose that $\mathbf{V}$ is a linear representation of $\G$. Let $f: \X\to \Y$ is a $\G$-torsor and assume that there exists  $\G$-equivariant embeddings $\X\subset \mathbf{V}$ such that the image is $\GL(\mathbf{V})$-stable and $\Y\subset \mathbf{V}//\G$ such that the diagram
 \[
 \begin{tikzcd}
 \X\ar[d,"f"]\ar[r]&\mathbf{V}\ar[d,"q"]\\
 \Y\ar[r]&\mathbf{V}//\G,
 \end{tikzcd}
 \] where $q$ is the canonical quotient map, commutes. Then for any $[\al]\in H^1(F,G)$, $\X_\al\cong \X$ and there exist twisted torsor maps $f_\al: \mathbf{X}\to \Y$ such that
 \[
 Y=\bigsqcup_{[\al]\in H^1(F,\G)}f_\al(X).
 \]
\end{Cor}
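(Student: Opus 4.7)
The plan is to bootstrap from Proposition \ref{Thm: torsor rational} by using the hypothesis that $\mathbf{X}$ sits inside a linear representation $\mathbf{V}$ in a $\GL(\mathbf{V})$-stable way, so that all the twists $\mathbf{X}_\al$ can be identified with $\mathbf{X}$ itself.

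First, I would invoke Hilbert's Theorem 90 in the form stated in the paragraph preceding the corollary: for any cocycle class $[\al]\in H^1(F,\G)$, the twisted representation $\mathbf{V}_\al$ is isomorphic to $\mathbf{V}$, and the family of isomorphisms can be chosen compatibly so that each pure inner form $\G_\al$ acts on the single variety $\mathbf{V}$. Second, I would use the $\GL(\mathbf{V})$-stability of $\X\subset \mathbf{V}$: since the chosen isomorphism $\mathbf{V}_\al\iso \mathbf{V}$ is an element of $\GL(\mathbf{V})(F^{sep})$, this isomorphism sends the twisted subvariety $\mathbf{X}_\al\subset \mathbf{V}_\al$ to a $\Gal(F^{sep}/F)$-stable subvariety of $\mathbf{V}$ contained in $\mathbf{X}$, and dimension considerations (or directly, the fact that $\mathbf{X}$ is stable under $\GL(\mathbf{V})$) give the identification $\mathbf{X}_\al\cong \mathbf{X}$ as $F$-varieties.

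Third, I would transport the structure map. The commutative diagram gives $f$ as the restriction of the canonical quotient $q:\mathbf{V}\to \mathbf{V}//\G$ to $\mathbf{X}$, landing in $\Y$. Twisting the whole diagram by $\al$ and using that the categorical quotient $\mathbf{V}//\G_\al$ is canonically identified with $\mathbf{V}//\G$ (the formation of invariants descends along the compatible isomorphism $\mathbf{V}_\al\iso\mathbf{V}$ intertwining the $\G_\al$- and $\G$-actions), the twisted map $f_\al$ is then realized as a morphism $\mathbf{X}\to \Y$ under the identification of the previous step.

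Finally, having exhibited each $f_\al$ as a map out of the single variety $\mathbf{X}$, the decomposition $Y=\bigsqcup_{[\al]\in H^1(F,\G)} f_\al(X)$ follows immediately from Proposition \ref{Thm: torsor rational} applied to $f:\mathbf{X}\to \mathbf{Y}$. The only subtlety, which I expect to be the main technical point, is justifying carefully that the identification $\mathbf{X}_\al\cong \mathbf{X}$ as $F$-varieties is compatible with the quotient map in the sense required to write $f_\al$ as a genuine morphism $\mathbf{X}\to \Y$ rather than $\mathbf{X}_\al\to \Y$; this however is forced by the compatibility of the twisting isomorphisms $\mathbf{V}_\al\cong \mathbf{V}$ with the $\G_\al$- and $\G$-actions and by the functoriality of the categorical quotient.
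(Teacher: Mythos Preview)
The paper does not actually give a proof of this corollary: the appendix opens by stating that ``the proofs are standard exercises which we omit.'' Your proposal is a correct way to fill in this omitted argument, and it follows exactly the line of reasoning the paper itself hints at in the paragraph immediately preceding the corollary (Hilbert 90 to identify $\mathbf{V}_\al\cong\mathbf{V}$, then $\GL(\mathbf{V})$-stability of $\mathbf{X}$ to identify $\mathbf{X}_\al\cong\mathbf{X}$, then invoke Proposition~\ref{Thm: torsor rational}).
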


Now suppose that $F$ is a local field. Let $\G$ be a reductive group over $F$ and $f:{\X}\to {\Y}$ is a ${\G}$-torsor of $F$-varieties such that there exists embeddings producing a diagram of $\G$-varieties as in the corollary. We introduce the notation $f/G:[Y/G]\to Y$ to denote the map of Hausdorff spaces
\[
f/G:[Y/G]:=\bigsqcup_{[\al]\in H^1(F,\G)}X\xrightarrow{f_\al}Y.
\]
\begin{Prop}\label{Prop: surjective schwartz}
The map $f/G$ is a submersion. In particular, it induces a surjective map
\[
\left(f/G\right)_!:\bigoplus_{[\al]\in H^1(F,\G)}C_c^\infty (X)\to C_c^\infty(Y)
\]
where for a function $\phi=\sum_\al \phi_\al$ and for any $y\in Y$ 
\[
\left(f/G\right)_!\phi(y) = \int_{G_\al}\phi_\al(g\cdot x)dg_\al,
\]
where $x\in (f/G)^{-1}(y)$, and $dg_\al$ is the Haar measure on $G_\al$ chosen in such a way that all measures $dg_\be$ are compatible via the inner twisting from $\G$.
\end{Prop}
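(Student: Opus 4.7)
The plan is to derive both assertions from the algebraic smoothness of torsors under reductive groups. First, I would observe that $f: \mathbf{X} \to \mathbf{Y}$, being a $\G$-torsor of $F$-schemes with $\G$ reductive (hence smooth), is itself a smooth surjective morphism. Passing to $F$-points via the implicit function theorem for $F$-analytic manifolds, this yields an $F$-analytic submersion $f: X \to Y$. The hypotheses of Corollary \ref{Cor: torsor rational} identify each twisted variety $\mathbf{X}_\al$ with $\mathbf{X}$ (by Hilbert 90 applied to the ambient linear representation), so each $f_\al$ is again an $F$-analytic submersion onto its image $f_\al(X)$. The disjoint decomposition $Y = \bigsqcup_{[\al]} f_\al(X)$ then promotes $f/G$ itself to a submersion out of the Hausdorff space $[Y/G]$.

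Given the submersion property, the pushforward formula is well-defined on each component: the fiber of $f_\al$ over any $y \in f_\al(X)$ is a single $G_\al$-orbit, so integrating $\phi_\al \in C_c^\infty(X)$ against the chosen Haar measure $dg_\al$ produces a value independent of the chosen $x \in f_\al^{-1}(y)$. Smoothness and compact support of the result follow from the submersion property (local triviality via sections) together with compactness of $\mathrm{supp}(\phi_\al)$ and of the centralizer stabilizers. The sum over $[\al]$ combines into a function on $Y$ via the disjoint decomposition above.

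For surjectivity of $(f/G)_!$, I would proceed by a standard partition of unity argument. Fix $\psi \in C_c^\infty(Y)$ and cover $\mathrm{supp}(\psi)$ by finitely many small open sets $U_i$, each contained in some component $f_{\al_i}(X)$ and small enough to admit an $F$-analytic section $s_i: U_i \to X$ of $f_{\al_i}$; such sections exist by the submersion property. Partition $\psi = \sum_i \psi_i$ subordinate to this cover, fix a nonnegative bump function $\chi_i \in C_c^\infty(G_{\al_i})$ with $\int_{G_{\al_i}} \chi_i \, dg_{\al_i} = 1$, and define
\[
\phi_{\al_i}(g \cdot s_i(y)) := \chi_i(g)\, \psi_i(y), \qquad y \in U_i,\ g \in \mathrm{supp}(\chi_i),
\]
extended by zero outside the open set $\mathrm{supp}(\chi_i) \cdot s_i(U_i) \subset X$. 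A change of variables against $dg_{\al_i}$ gives $(f_{\al_i})_!\phi_{\al_i} = \psi_i$, and summing over $i$ produces the desired preimage.

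The main technical obstacle will be verifying that $\phi_{\al_i}$ genuinely defines a compactly supported smooth function on all of $X$. This reduces to showing that the orbit map $G_{\al_i} \times s_i(U_i) \to X$ given by $(g,x) \mapsto g \cdot x$ is an $F$-analytic open embedding onto its image, which is exactly the torsor property applied locally, together with the observation that $\mathrm{supp}(\chi_i) \cdot s_i(\mathrm{supp}(\psi_i))$ is compact. Once this is in hand, the remainder of the argument is routine.
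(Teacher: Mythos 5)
The paper itself omits the proof of this proposition (the appendix says ``The proofs are standard exercises which we omit''), so there is no proof of record to compare against. Your argument is the expected one, and it is essentially correct: the smoothness of $f$ as a morphism of $F$-schemes comes from the smoothness of $\G$ (here one uses the blanket hypothesis that the residue characteristic is odd, so that reductive groups over $F$ are smooth), the implicit function theorem for $F$-analytic manifolds converts this to a submersion on $F$-points, and Proposition~\ref{Thm: torsor rational} together with the Hilbert 90 identification $\X_\al\cong\X$ gives the open (indeed clopen, since the $f_\al(X)$ partition $Y$) decomposition underlying the pushforward. Your partition-of-unity construction of preimages is the standard way to prove surjectivity, and the verification at the end that $G_{\al_i}\times s_i(U_i)\to X$ is an open embedding is precisely the torsor property localized over $U_i$, so the constructed $\phi_{\al_i}$ does land in $C_c^\infty(X)$.

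Two small remarks. First, the phrase ``compactness of the centralizer stabilizers'' is a red herring: in a torsor the stabilizers are trivial, and what is actually needed for compact support of $(f_\al)_!\phi_\al$ is that the orbit map $G_\al\to f_\al^{-1}(y)$ is a homeomorphism onto a closed fiber (the torsor property), so that $\{g: g\cdot x\in\supp(\phi_\al)\}$ is compact; this is automatic and does not require any statement about stabilizers. Second, in the non-archimedean case ``$F$-analytic submersion'' should be read in the $l$-space sense of Bernstein--Zelevinsky (smooth map admitting local sections), and the surjectivity of the pushforward is then their standard result; your partition-of-unity argument reproves it directly and works uniformly in both the archimedean and non-archimedean cases.
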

\end{appendix}


\bibliographystyle{alpha}

\bibliography{bibs_endoscopy}
\end{document}